\documentclass[11pt,a4paper]{amsart}

\usepackage[left=26mm,right=26mm,top=25mm,bottom=25mm,
  headheight=14pt,headsep=7mm]{geometry}
\usepackage[section]{placeins}

\usepackage{mathrsfs}
\usepackage{amssymb}
\usepackage{microtype}
\usepackage{graphicx}
\usepackage{subcaption}

\usepackage{xcolor}
\usepackage{float}
\usepackage{needspace}
\usepackage[hidelinks]{hyperref}
\allowdisplaybreaks[1]
\newtheorem{theorem}{Theorem}[section]
\newtheorem{lemma}[theorem]{Lemma}

\newtheorem{corollary}[theorem]{Corollary}
\newtheorem{proposition}[theorem]{Proposition}
\theoremstyle{definition}
\newtheorem{definition}[theorem]{Definition}
\newtheorem{example}[theorem]{Example}

\theoremstyle{remark}
\newtheorem{remark}[theorem]{Remark}

\numberwithin{equation}{section}

\title[Convex-like Submanifolds, Caps and a Volume Comparison Theorem]
  {Convex-like Submanifolds, Caps\\
   and a Volume Comparison Theorem}
\author{Baichuan Hu}
\address{School of Mathematical Sciences, Peking University, Beijing 100871, P.R. China}
\curraddr{}
\email{2301110016@pku.edu.cn}
\thanks{}

\author{Xiang Ma}
\address{LMAM, School of Mathematical Sciences, Peking University, Beijing 100871, P.R. China}
\curraddr{}
\email{maxiang@math.pku.edu.cn}
\thanks{Corresponding author: Xiang Ma(maxiang@math.pku.edu.cn); partially supported by NSFC grant 11831005.}

\author{Dong Zhang}
\address{School of Mathematical Sciences, Peking University, Beijing 100871, P.R. China}
\curraddr{}
\email{dongzhang@math.pku.edu.cn}
\thanks{The third author was partially supported by NSFC grant 12401443.}

\begin{document}

\begin{abstract}
Convex-like submanifolds in the Lorentz-Minkowski spaces are introduced in this paper as a natural and broad generalization of convex surfaces in the Euclidean space. Associated with such a convex-like submanifold in a Minkowski space, we define roofs (which resembles the supporting planes), caps (which are generated by the light rays in the normal bundle), and towers (which revives the notion of interior for such high codimensional objects). As a touchstone of this theory, a volume comparison theorem is proved, which is a non-trivial generalization of the obvious fact that among two nested convex surfaces, the outer one has larger volume. Finally we provide two important applications of this theorem.
\end{abstract}

\maketitle

\section{Introduction}

Convex curves and surfaces in Euclidean spaces are the most classic geometric objects with complete theories and rich results \cite{Schneider}. Once turning to the Lorentz-Minkowski spaces, we would also like to find similar interesting and general enough geometric models. In particular, which kind of submanifolds are best generalization of these convex counterparts?

A naive attempt is looking at any bounded convex body with smooth boundary in the linear space (endowed with the Lorentz-Minkowski metric). It is obvious that the induced metric on the boundary could not be everywhere nondegenerate. Thus it is not a good candidate to do geometry.

But our hope is not in vain. If we consider a convex closed curve on a spacelike plane in $\mathbb{R}^3_1$ (the 3-dim Minkowski space), it can be characterized as a spacelike curve with a nowhere vanishing (spacelike) curvature vector. In other words, its osculating plane at each point is non-degenerate and spacelike everywhere. This is a nice property stable under slight perturbation, which allows us immediately to find a large class of similar objects comparable to convex loops. This intuition motivates the following generalization first in 2016.

\begin{definition}[\emph{strong spacelike}].

 \begin{enumerate}
      \item A smooth curve $\Gamma$ in $\mathbb{R}^3_1$ is called \emph{strong spacelike} if its tangent and curvature vector span a spacelike plane at each point on $\Gamma$ \cite{Ye1}.
       \item The index of a closed strong spacelike curve is the winding number of its unit tangent indicatrix on the de Sitter sphere, with the orientation chosen so that this number is positive. Equivalently, it is the rotation index of its projection onto a spacelike plane along any causal direction.
       \item A closed strong spacelike curve $\Gamma$ with index$=1$ is called \emph{convex-like}.
   \end{enumerate}
\end{definition}

In the 2016 paper \cite{Ye1} and the follow-up \cite{Ye2}, we showed that for such a convex-like loop $\Gamma$:
\begin{itemize}
    \item It projects to a closed convex planar curve under causal projections.
    \item $\Gamma$ spans a unique maximal (spacelike) surface \cite{Ye2}.
    \item The total curvature $\int_\Gamma \kappa ds \le 2\pi$ , with the equality attained if and only if $\Gamma$ is a convex plane curve.~~ (\emph{The reversed Fenchel-type inequality})
\end{itemize}

The first property above justifies the name \emph{convex-like}. Moreover, these results motivate us to ask the following much general questions:
\begin{enumerate}
    \item [Q1] What is the suitable generalization of \emph{strong spacelike} in the high dimensional (and higher codimensional) pseudo-euclidean spaces?
    \item [Q2] Without the smoothness assumption (like the polygon and polyhedron), how to generalize the definition of \emph{convex-like} $\Gamma$ in the general case?
    \item [Q3] How to generalize the basic constructions like supporting functions and supporting planes of convex objects to such $\Gamma$?
    \item [Q4] What is the possible physical meaning about these objects?
    \item [Q5] (global) In such a high codim$\ge 2$ case, can we generalize the basic \emph{inclusion/interior} relationship between such $\Gamma$'s and compare their volumes?
    \item [Q6] (global) Can we generalize those classical geometric inequalities to this case?
    \item [Q7] (local) Can we generalize the \emph{positive curvature properties} for these convex-like objects?
    \item [Q8] (global) What is the possible behavior of the convex-like submanifold $\Gamma$ under MCF (the mean curvature flow), or CSF (the curve shortening flow) for curves? In particular, will it converge to a circular limit, or create new types of singularities?
    \item [Q9] (global) The Plateau problem/ the Dirichlet problem. Namely, to establish the existence and uniqueness of a maximal submanifold spanned by such a convex-like boundary.
\end{enumerate}

These fundamental questions, as well as partial solutions to them, will constitute the core of the first author's PhD thesis. In this paper we just report partial results concerning Q1-Q5 above. \\

First we should point out that the \emph{strong spacelike} property has a natural generalization by Ben Andrews and Qiyu Zhou in \cite{AZ26} as follows.

\begin{definition} [\cite{AZ26}]\label{def-spacelike-convex}
An immersed spacelike submanifold $\Gamma^{n-1} \subset \mathbb{R}^{n+k}_k$ is \emph{spacelike-convex} if
\[
\langle h(v, v), h(v, v)\rangle > 0
\]
for all unit $v \in T\Gamma$, where $h$ is the second fundamental form.
\end{definition}

For Question~2, to include both smooth immersions and piecewise linear objects in our consideration and to overcome the difficulty of bad singularities, we use the projection images to characterize the generalized convexity as below.

\begin{definition}\label{def-convex-like}
An $(n-1)$-dimensional topological submanifold $\Gamma$ in $\mathbb{R}_1^{n+1}$ is \emph{convex-like} if for any causal projection $P = P_{v,\Sigma}$ (where the image hyperplane $\Sigma$ is spacelike, and $v$ is a timelike or null vector transversal to $\Sigma$), $P|_\Gamma $ is always a topological homeomorphism to the image $P(\Gamma)$, which is the boundary of a bounded open convex subset of $\Sigma$.
\end{definition}

\begin{remark}
We note that in 2025, Fukui, Honda and Umehara have introduced \emph{essentially cocausal submanifolds} in \cite{Fukui}, whose significance can be seen from the title of this paper:
\emph{Convexity of space-like projections of
submanifolds with co-dimension 2 in
Lorentz–Minkowski space}.
Such a submanifold is characterized in terms of its first and second fundamental form at each point (see Definition~\ref{def-essentially-cocausal}). This property implies, under any timelike orthogonal projection, such a submanifold will correspond to a locally strictly convex hypersurface~\cite{Fukui}.

Compared with this notion, our \emph{convex-like} property in Definition~\ref{def-convex-like} greatly weakens smoothness assumptions, at the same time requiring that the tangent plane at any immersed point is still spacelike; moreover, the projection image of a convex-like submanifold is not strictly convex in general. Thus any of these two notions does not imply the other. Further discussions are left to Section~3. (See Example~\ref{example}.)
\end{remark}

\begin{remark}
Note that \emph{spacelike convexity} in Definition~\ref{def-spacelike-convex} implies \emph{essentially cocausal} property in Defitinion~\ref{def-essentially-cocausal}.

Below in Section~3 we will show that \emph{spacelike-convexity} also implies \emph{convex-like} property if the submanifold has dimension greater than 1 and is assumed to be smooth, connected, compact without boundary. (See Proposition~\ref{prop-spacelike-convex-like} and Remark~\ref{remark-index-1}.)
\end{remark}

After defining a convex-like submanifold $\Gamma$, we introduce four important notions. The first one is \emph{roof}, namely a pair of null supporting planes at every point of $\Gamma$. Second, we define their \emph{support functions} using only null directions. Third, \emph{towers} are introduced as one-sided Lorentzian analogues of the regions enclosed by convex hypersurfaces, with the two orientations corresponding to the past and the future. The intersection of the two towers associated with a convex-like submanifold is called its \emph{acausal interior}, an analogue of the convex body enclosed by a convex hypersurface in the Euclidean case. Finally, we define a \emph{cap} as the boundary of a tower, which is also realized as non-smooth null hypersurfaces carrying convex-like submanifolds.

\begin{remark}
Our previous question Q4 is about the physical meaning. In the regard, it is not difficult to see that a convex-like submanifold is acausal in the usual spacetime language. See Proposition~\ref{prop-acausal}. Furthermore, \emph{acausal interior} of a convex-like $\Gamma$ is consist of all points having NO causal relationship with $\Gamma$,

A notion related to caps and our convex-like property is \emph{past null convexity}, which arises in the study of the Penrose inequality in Minkowski spacetime~\cite{BW14}.
\end{remark}

These constructions helps us to compare the relative positions of convex-like submanifolds through a notion of \emph{nesting}. Roughly speaking, $\Gamma_1$ nests in $\Gamma_2$ if, under every causal projection $P$, $P(\Gamma_1)$ is enclosed by $P(\Gamma_2)$. In Euclidean convex geometry, inclusion of convex bodies implies monotonicity of their boundary volumes. Our main result extends this volume comparison principle to convex-like submanifolds.

\begin{theorem}\label{volume comparison}
Let $\Gamma_1$ and $\Gamma_2$ be two convex-like submanifolds in $\mathbb{R}_1^{n+1}$. If $\Gamma_1$ lies in the acausal interior of $\Gamma_2$, then
\[
V^{n-1}(\Gamma_1) \leq V^{n-1}(\Gamma_2).
\]
\end{theorem}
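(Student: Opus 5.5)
Our plan is to build a volume non-increasing map from $\Gamma_2$ onto $\Gamma_1$, using the caps (null hypersurfaces) of $\Gamma_1$. In Euclidean space the analogous map is the nearest-point projection onto a convex body, which is $1$-Lipschitz. In Lorentzian signature, the natural substitute is the projection of $\Gamma_2$ onto $\Gamma_1$ along null directions. We shall use that projection along null generators \emph{increases} spacelike lengths in a suitable sense, the reversed triangle inequality for spacelike-separated data.

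\textbf{Step 1 (caps and foliation).} Let $C^+$ be the future cap of $\Gamma_1$, the boundary of its future tower; it is generated by the null rays emanating from $\Gamma_1$ along one null normal direction (the roof). Every point of $C^+$ lies on a null generator starting at a unique point of $\Gamma_1$, at least away from a measure-zero set where generators cross. This gives a retraction $\pi^+:C^+\to\Gamma_1$. The past cap $C^-$ gives $\pi^-$ in the same way. The key observation is that $C^+$ is an achronal Lipschitz null hypersurface.

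\textbf{Step 2 (comparison of metrics).} Along a null generator $x+t\ell$, with $\ell$ null and normal to $\Gamma_1$ (or normal to the roof), the induced degenerate metric on $C^+$ satisfies $|d\pi^+(w)|\le|w|$ for spacelike tangent vectors $w$. This follows from the null convexity of the cap: the expansion of the generators is nonnegative going outward, which reduces to $\langle h(v,v),\ell\rangle$ having a sign. In the non-smooth case we would instead argue through the projection picture. Choose a causal projection $P_{\ell,\Sigma}$. The cap's degenerate metric equals the pullback of the Euclidean metric of the projected convex body in $\Sigma$ under projection along $\ell$. Distances along the cap are then computed in $\Sigma$, where the Euclidean nearest-point retraction is $1$-Lipschitz.

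\textbf{Step 3 (moving $\Gamma_2$ to the cap).} Since $\Gamma_1$ lies in the acausal interior of $\Gamma_2$, every point $y\in\Gamma_2$ is causally unrelated to $\Gamma_1$. Hence $y$ lies outside both towers of $\Gamma_1$, or on the appropriate side of them. Moving $y$ along a null or timelike direction, we meet $C^+\cup C^-$. The plan is to choose a single null direction field, or to split $\Gamma_2$ into the part projecting to $C^+$ and the part projecting to $C^-$, such that the resulting map $F:\Gamma_2\to C^\pm$ is onto a region covering $\Gamma_1$ under $\pi^\pm$. Moving a spacelike submanifold along null lines onto a null hypersurface does not increase spacelike volume: by the reverse Cauchy–Schwarz/Pythagoras in Minkowski space, adding a null or timelike component to a spacelike vector decreases its length. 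The composite $\pi\circ F:\Gamma_2\to\Gamma_1$ is then surjective, by a degree argument using the homeomorphic causal projections, and it is $1$-Lipschitz on tangent vectors almost everywhere. The area formula then gives $V^{n-1}(\Gamma_1)\le V^{n-1}(\Gamma_2)$. Where smoothness fails, we work with Hausdorff measure, since convex-like submanifolds are Lipschitz graphs over spacelike hyperplanes.

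The main obstacle is Step 3. We need to show that the combined map is length non-increasing in all tangent directions simultaneously, not just along one direction. We also need to handle the seam where $\Gamma_2$ switches between projecting to the future cap and to the past cap. First we would try a cleaner reduction: work with a single timelike projection $P_{e_0,\Sigma}$ and write both submanifolds as graphs of $1$-Lipschitz (acausal) functions $u_i$ over the convex hypersurfaces $P(\Gamma_i)\subset\Sigma$. Then $V(\Gamma_i)=\int_{P(\Gamma_i)}\sqrt{1-|\nabla u_i|^2}$. Second, we would try to compare the two graphs by the Euclidean nearest-point map in $\Sigma$ combined with the constraint, derived from the acausal interior hypothesis, that $|u_2(y)-u_1(x)|<|y-x|$. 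This yields a pointwise Jacobian inequality via a Lorentzian 1-Lipschitz estimate. If that fails, we would fall back on a Crofton-type formula, averaging over null hyperplane families whose intersection counts are monotone under nesting.
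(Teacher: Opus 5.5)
Your outline has the right overall shape: a map $\Gamma_2\to\Gamma_1$ built from null generators of caps, followed by the area formula. But the step that carries the whole proof, Step~3, is left open, and the justification you give for it is false.

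For null $\ell$ and spacelike $w$ one has $|w+\ell|_L^2=|w|_L^2+2\langle w,\ell\rangle$. So length decreases only when the added component is orthogonal to $w$. For a map $y\mapsto y+s(y)\ell(y)$, the added term $d(s\ell)(w)$ is not orthogonal to $w$, and for an arbitrary null field its sign is uncontrolled.

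Monotonicity is available in exactly one configuration. Take two convex-like submanifolds on the \emph{same} cap and move from the inner one to the outer one along null support normals $L_x$ of the inner one, $\phi(x)=x-t_xL_x$ with $t_x\ge0$. Then for all pairs
\[
|\phi(x)-\phi(y)|_L^2-|x-y|_L^2
=2t_x\langle y-x,L_x\rangle+2t_y\langle x-y,L_y\rangle-2t_xt_y\langle L_x,L_y\rangle\ge0,
\]
each term being nonnegative by the support property and by $\langle L_x,L_y\rangle\le0$ (Lemma~\ref{injection enlarging distance}). Note that this is an inequality for chords, not only for tangent vectors.

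The idea you are missing is the reduction to that configuration. Put $\Gamma=Cap^+(\Gamma_1)\cap Cap^-(\Gamma_2)$. Since $\Omega^+(\Gamma_1)\cap\Omega^-(\Gamma_2)\supseteq\Omega(\Gamma_1)\neq\emptyset$, $\Gamma$ is convex-like by Theorem~\ref{intersection of caps}. Moreover, $\Gamma_1$ is inside $\Gamma$ on $Cap^+(\Gamma_1)$, and $\Gamma$ is inside $\Gamma_2$ on $Cap^-(\Gamma_2)$ (Proposition~\ref{insert a convex-like submanifold}). Two one-cap comparisons then give the theorem. In your language, the correct $F$ runs along generators of the cap $Cap^-(\Gamma_2)$ of the \emph{outer} submanifold. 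It lands on $Cap^+(\Gamma_1)$ exactly along $\Gamma$, which lies in the outer part of that cap, so no splitting of $\Gamma_2$ and no seam ever arise.

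Several other points also need repair.

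\textbf{Step~2 in the non-smooth case.} The argument does not work. The generator direction varies along the cap, so no single projection $P_{\ell,\Sigma}$ reproduces its degenerate metric. Projecting along a fixed null $\ell$ collapses the generators parallel to $\ell$ rather than producing a convex body.

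\textbf{The area formula.} The generator map can be discontinuous on a dense set, so an almost-everywhere infinitesimal bound does not feed into the area formula directly. The paper instead uses the chord inequality together with uniform spacelikeness (Theorem~\ref{uniformly spacelike}) to make the inverse map Euclidean-Lipschitz on its image. It then extends this map to the whole sphere by Kirszbraun's theorem and proves the Jacobian inequality at density points of the closure of the image.

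\textbf{The fallbacks.} Neither closes the gap. The graph comparison uses only one projection, $1$-Lipschitz graphs over convex hypersurfaces, and the bound $|u_2(y)-u_1(x)|<|y-x|$. All of these hold for the pair
\[
r_1(s)=(10\cos s,10\sin s,0),\qquad r_2(s)=\Bigl(12\cos s,12\sin s,\frac{\sin 120s}{12}\Bigr)
\]
from the remark following Theorem~\ref{euclidean volume comparison}, yet $V^1(r_2)<V^1(r_1)$. The same pair has monotone intersection counts with almost every plane, so a Crofton-type argument fails as well. Either approach would need to use the convex-likeness of $\Gamma_2$ under all causal projections. In the paper that hypothesis enters exactly through the null support inequalities displayed above.
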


\begin{remark}
The notion of convex-like submanifolds extends naturally to pseudo-Euclidean spaces of higher negative index. More precisely, an $(n-1)$-dimensional submanifold of $\mathbb{R}^{n,k}$ is called convex-like if every projection along a maximal negative semidefinite subspace restricts to a homeomorphism onto an affine convex hypersurface. For such submanifolds, we can still define the acausal interior and the nesting relation. However, roofs, support functions, and caps become much more difficult to study because of the additional timelike directions.

The cap method developed in this paper does not extend to higher negative index. Inspite of that, we can establish the volume comparison theorem for arbitrary negative index using geometric deformations and approximations. This approach will be developed in a separate paper.
\end{remark}

Although Q6--Q9 lie beyond the scope of this paper, a number of results concerning these questions are already available.

\begin{remark}
\textbf{For Q6}, a Fenchel-type inequality was obtained jointly by the second author, Nan Ye, and Donghao Wang; see \cite{Ye1,Ye2}.

\textbf{For Q7}, in unpublished work we have proved that smooth spacelike convex-like submanifolds have nonnegative sectional curvature and that convex-like polyhedrons have nonnegative Regge curvature. In the smooth spacelike-convex setting, Andrews and Zhou proved positivity of sectional curvature; see \cite[Lemma~3.7]{AZ26}.

\textbf{For Q8}, relevant works include the following:

Andrews and Zhou~\cite{AZ26} prove that compact connected spacelike-convex submanifolds without boundary, with one spacelike codimension, shrink to a point under mean curvature flow and converge smoothly to a round sphere after suitable rescaling. In the curve case, turning number one is additionally assumed.

The first and second authors~\cite{HM26} establish a dichotomy for closed smooth spacelike curves admitting a one-to-one convex projection onto a spacelike plane: at the finite maximal existence time, either the curve shrinks to a point and becomes asymptotically circular, or its tangent directions approach the null cone along a sequence. Degeneration to null tangent directions under the flow cannot occur for convex-like curves, but may occur for closed strong spacelike curves of higher index or for general spacelike curves admitting a one-to-one convex projection.

\textbf{For Q9}, a closely related result is Li's work on the Dirichlet problem for maximal graphs of higher codimension
\cite{Li22}.
\end{remark}

The contents of this article are arranged as follows. \textbf{Section~2} contains some basic notation about Lorentzian geometry. In \textbf{Section~3}, we give the definition of convex-like submanifolds and their null supports, and provide some important properties. In \textbf{Section~4}, we introduce the notions of towers and caps. In \textbf{Section~5}, we define the relation of \emph{nest in}. In \textbf{Section~6}, we recall the Euclidean volume comparison theorem and give two standard proofs, both of which meet great trouble when generalized to Minkowski spacetime.

In \textbf{Section~7}, we prove the Volume Comparison Theorem~\ref{volume comparison}: for two convex-like submanifolds $A$ and $B$, if $A$ \emph{nests in} $B$ then $A$ has no greater volume than $B$. The basic idea is to reduce the problem to comparing two sections (which might be quite irregular) of the same \emph{cap}, in which case we can easily show that the correspondence between them along the light rays are monotonically distance increasing/decreasing. As to the volumes of geometric objects (not necessarily smooth), they are hard to define and measure directly using Lorentz-Minkowski geometry. We overcome this difficulty by establishing bi-Lipschitz mappings to round spheres, which makes the rest part of this section unavoidably quite technical.

As applications, in \textbf{Section~8}, we prove that convergence of the caps' support functions implies convergence of their intersections and that volume is continuous under Hausdorff convergence of convex-like submanifolds. In \textbf{Section~9}, we use the volume comparison theorem to show that, for a convex-like submanifold in $\mathbb{R}_1^{n+1}$, its projection volume ratio is bounded by $n^{n-1}$. Some conclusions about Lipschitz maps are collected in \textbf{Appendix~A}.

\vspace{0.5cm}

\noindent\textbf{Statement of AI use.}

The authors used ChatGPT to assist in language editing and in reviewing the proofs.
In particular, the proof of Proposition \ref{convex-like smooth equivalent}, Theorem \ref{euclidean volume comparison}, Proposition \ref{convex-like lip to sphere} and Lemma \ref{central projection lip}  was refined with the assistance of ChatGPT.

The authors take full responsibility for the content of this manuscript.

\section{Basic Notations}
For background on semi-Riemannian and Lorentzian geometry,
we refer to \cite{ONeill}.

Throughout the main text, we assume $n\geq2$.
The Lorentz-Minkowski space $\mathbb{R}_1^{n+1}$ is the $(n+1)$-dimensional real linear space endowed with the Lorentzian inner product $\langle\cdot ,\cdot\rangle$ and a fixed time-orientation. In a fixed Lorentzian coordinate system, we may write
\[
\langle X ,Y \rangle= \sum_{i = 1}^n x_iy_i  - x_{n+1}y_{n+1}, ~~~~X = (x_1, ..., x_{n+1}), Y = (y_1, ..., y_{n+1}).
\]

Also, in a fixed coordinate, we denote by $\langle\cdot ,\cdot\rangle_E$ the Euclidean inner product:
\[
\langle X ,Y \rangle_E= \sum_{i = 1}^{n+1} x_iy_i,\,\, ~~~~X = (x_1, ..., x_{n+1}), Y = (y_1, ..., y_{n+1}).
\]

Denote the Euclidean length of vectors by:
\[
|v|_E = \sqrt{\langle v ,v \rangle_E},
\]
and let $\mathrm{dist}(x,y) = |x-y|_E$ be the Euclidean distance between $x$ and $y$. We always use the Euclidean topology, and different choices of Lorentzian coordinates define the same topology.

In the subsequent discussions, unless otherwise specified, all coordinate systems are assumed to be Lorentzian orthogonal, with the last coordinate component representing time. If a coordinate is fixed, then we can consider the Euclidean properties of any subset or submanifold by regarding the coordinate as a Euclidean one.

For a subset $\Gamma\subset \mathbb{R}_1^{n+1}$, denote its convex hull as usual by
\[
\mathrm{Conv}(\Gamma) =\left\{\sum_{i=1}^{k}\lambda_ix_i|k\in \mathbb{N^*},x_i\in \Gamma,\lambda_i\geq 0,\sum_{i=1}^{k}\lambda_i = 1\right\}.
\]

For a linear subspace (or nonzero vector) $V$, let $V^\perp$ and $V^{\perp_E}$ be the Lorentzian and Euclidean orthogonal complements of $V$. Notice that the Euclidean length, distance and orthogonal complement all depend on the choice of Lorentzian coordinate system.

Recall that in the Lorentz space $\mathbb{R}_1^{n+1}$, a vector $X$ is called \emph{spacelike (null, timelike)} if $\langle X ,X \rangle > 0(= 0, < 0)$, respectively. A linear subspace $V$ is called \emph{spacelike (null, timelike)} if the Lorentz inner product restricts to a Riemannian (degenerate, Lorentz) inner product on $V$. An affine subspace can be of these three types similarly.

\textbf{We use the word \emph{causal} to describe an object being \emph{either timelike or null}, and \emph{cocausal} to represent \emph{spacelike or null} ones, following \cite{Fukui}. (In particular, a null vector is both causal and cocausal.)}

For a cocausal vector $v$, we write
\[
|v|_L=\sqrt{\langle v,v\rangle}.
\]

If $x = (x_1, ..., x_{n+1})$ is causal and $x_{n+1} > 0$ ($x_{n+1} < 0$), we call $x$ a future-pointing (past-pointing) vector. We denote
\begin{align*}
\mathrm{Timecone}^+(x) &= \{y~|~y-x\text{ is future-pointing and timelike}\};\\
\mathrm{Timecone}^-(x) &= \{y~|~y-x\text{ is past-pointing and timelike}\};\\
\mathrm{Nullcone}^+(x) &= \{y~|~y-x\text{ is future-pointing and null}\};\\
\mathrm{Nullcone}^-(x) &= \{y~|~y-x\text{ is past-pointing and null}\}.
\end{align*}
We may omit the position vector $x$ if $x=0$, for example,
\[
\mathrm{Nullcone}^+ = \mathrm{Nullcone}^+(0) = \{\text{future-pointing null vectors}\}.
\]

Topologically, $\mathrm{Nullcone}^\pm(x) \cup \{x\}$ is the boundary of $\mathrm{Timecone}^\pm(x)$.

Let $\pi$ be a cocausal hyperplane, then $\pi$ divides $\mathbb{R}_1^{n+1}$ into two halfspaces. One of them contains $\pi + (0,0,...,1)$ and we call it the \emph{future halfspace} with respect to $\pi$ (or briefly say $\pi$'s \emph{future}). Similarly, the other halfspace is said to be the \emph{past halfspace} with respect to $\pi$ (or $\pi$'s \emph{past}). A set $\Sigma \subset \mathbb{R}_1^{n+1}$ is said to be \emph{above} \emph{(below)} $\pi$ if every point $x\in \Sigma$ is in $\pi$ or in $\pi$'s future (past).

A subset $W\subset \mathbb{R}_1^{n+1}$ is said to be \emph{bounded}, if for any $v\in \mathbb{R}_1^{n+1}$, the set $\{\langle v,w\rangle\big |w\in W\}$ is bounded. This is equivalent to saying $|w|_E$ is bounded in some fixed coordinate.

\begin{remark} We shall note that the above definition applies only to specific causal characters; in other words, we do not distinguish being future-pointing or past-pointing for spacelike vectors, and do not distinguish future or past for corresponding halfspaces of any timelike hypersurfaces. So the above definitions about future and past only depend on time-orientation and do not depend on the choice of coordinate system.
\end{remark}

Given an n-dimensional spacelike subspace $\Sigma$ and a vector $v \notin \Sigma$, we denote by $P_{v,\Sigma}$ the parallel projection to $\Sigma$ along $v$.

If $v$ is fixed, then for $\Sigma_1 \neq \Sigma_2 $ there exists an affine map $\phi:\Sigma_1 \rightarrow \Sigma_2$ such that $\phi \circ P_{v,\Sigma_1} = P_{v,\Sigma_2}$. So if we consider affine invariant properties, we may not choose a specific $\Sigma $ and only write $P_v$.

If $v$ is timelike (null, spacelike), then we say $P_v$ is a \emph{timelike (null, spacelike) projection}; and if $v \perp \Sigma$, we say $P_{v,\Sigma}$ is an \emph{orthogonal projection}.

\begin{remark} In most cases, the orthogonal projection is useful and sufficient. However, if $v$ is null then there is no orthogonal projection along $v$ so we have to use other parallel projections.
\end{remark}

\section{Convex-like submanifolds}

For smooth spacelike submanifolds, the following proposition gives an equivalent local characterization of the convex-like property.

\begin{proposition}\label{convex-like smooth equivalent}
Let $\Gamma^{n-1} \subset \mathbb{R}_1^{n+1}$, $n\ge 3$, be a smooth,
compact, connected spacelike submanifold without boundary.
Let $h$ denote its vector-valued second fundamental form, with
the convention
\[
h(v,w)=(D_vw)^\perp,
\]
where $D$ is the ambient flat connection.
Then the following conditions are equivalent:
\begin{enumerate}
    \item $\Gamma$ is convex-like.
    \item For every $p\in\Gamma$ and $v\in T_p\Gamma$,
    \[
    \langle h(v,v),h(v,v)\rangle\ge 0,
    \]
    and there exists a smooth spacelike normal vector field $N$
    along $\Gamma$ such that
    \[
    \langle h(v,v),N\rangle\ge 0
    \qquad
    \text{for every }p\in\Gamma,\ v\in T_p\Gamma.
    \]
\end{enumerate}
\end{proposition}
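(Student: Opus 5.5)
We prove the two implications separately. Throughout, the normal bundle of $\Gamma$ at each point is a Lorentzian plane (since $\Gamma$ is spacelike of codimension two). We use the following reformulation. For a causal direction $v$, the projection $P_v|_\Gamma$ is a local diffeomorphism if and only if $v\notin T_p\Gamma$ for all $p$, and this always holds because $T_p\Gamma$ is spacelike. The projected hypersurface $P_v(\Gamma)\subset\Sigma$ has second fundamental form, with respect to the unit normal $\nu$ of $\Sigma$ inside $\Sigma$, given by $\langle h(\cdot,\cdot),N_v\rangle$ up to a positive factor. Here $N_v$ is the unique normal vector of $\Gamma$ at $p$ that is orthogonal to $v$, suitably normalized; note that $N_v$ is spacelike, since $\mathrm{span}\{v,N_v\}$ is the Lorentzian normal plane and $v$ is causal. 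So local convexity of $P_v(\Gamma)$ at $p$ is equivalent to $\langle h(w,w),N_v\rangle$ having a fixed sign for all $w\in T_p\Gamma$.

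\textbf{(1)$\Rightarrow$(2).} Assume $\Gamma$ is convex-like. Fix $p$ and let the causal direction $v$ range over the two null directions and all timelike directions of the normal plane $N_p\Gamma$. The corresponding vectors $N_v$ range over the closed spacelike sector $S_p$ of $N_p\Gamma$, which is bounded by the two null lines. Each projection image $P_v(\Gamma)$ bounds a convex body, so for each $v$ the form $\langle h(w,w),N_v\rangle$ is semidefinite, with sign fixed by choosing $N_v$ as the inward direction. The inward choice varies continuously in $v$, and the set of causal directions in $N_p\Gamma$ is connected within each time orientation. Hence we get one oriented spacelike cone $S_p^+$ (half of the spacelike sector) on which $\langle h(w,w),\xi\rangle\ge0$ for all $\xi\in S_p^+$ and all $w$. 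By duality, $h(w,w)$ lies in the dual cone of $S_p^+$, and this dual cone consists of cocausal vectors. Therefore $\langle h(w,w),h(w,w)\rangle\ge0$. Finally, take $N$ to be the unit spacelike bisector of $S_p^+$. It is smooth in $p$, because the two null normal lines vary smoothly, and it satisfies $\langle h(w,w),N\rangle\ge0$.

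\textbf{(2)$\Rightarrow$(1).} Running the duality backwards, the two hypotheses force $h(w,w)$ to lie in the closed cone $C_p$ of cocausal normal vectors with $\langle\cdot,N\rangle\ge 0$. Then every $N_v$ in the corresponding half-sector pairs nonnegatively with every $h(w,w)$. Thus every causal projection $P_v|_\Gamma$ is an immersion whose image is a locally convex (weakly, with consistent co-orientation) closed hypersurface of $\Sigma\cong\mathbb{R}^n$. We then need the Hadamard-type theorem: a compact connected immersed locally convex hypersurface in $\mathbb{R}^n$, $n\ge3$, i.e.\ of dimension at least $2$, is embedded and bounds a convex body. This is where $n\ge3$ is used, since for curves the index could exceed $1$. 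For the weakly convex case we use the Sacksteder/van Heijenoort version, or deform slightly (perturb $h$ by adding $\varepsilon$ times the metric in the $N$ direction at the level of the projected hypersurface, equivalently use that the Gauss map of a weakly convex immersed sphere-type hypersurface has degree $\pm1$). Then $P_v|_\Gamma$ is an injective immersion of a compact manifold, hence a homeomorphism onto $\partial K$ with $K$ convex and bounded, as required. The open convex set is nonempty because the image is a compact $(n-1)$-manifold in $\mathbb{R}^n$.

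\textbf{Main obstacle.} The main obstacle is the weakly (not strictly) convex Hadamard step. With only semidefinite second fundamental form, the classical theorem needs a positivity point or the Sacksteder condition. We will first try to exploit compactness: some point of the projected hypersurface has positive definite second fundamental form, namely the farthest point from the origin. This point lets us invoke Sacksteder's theorem, which applies since $\dim\ge2$. A secondary subtlety is making the sign choice in (1)$\Rightarrow$(2) globally consistent, which uses the connectedness of $\Gamma$ and the time orientation.
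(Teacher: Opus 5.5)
Your proposal is correct and follows essentially the paper's route. For (1)$\Rightarrow$(2) you project along causal directions in the normal plane at $p$; the paper uses the timelike ones $u_\theta=\sinh\theta\,N_p+\cosh\theta\,T_p$ and lets $\theta\to\pm\infty$, which is your null-boundary and self-dual-cone argument. For (2)$\Rightarrow$(1) you compute the projected second fundamental form through the normal $\zeta\perp u$ and apply van Heijenoort (or Sacksteder) at a point of contact with an enclosing sphere, exactly as the paper does.

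Only minor repairs are needed:
\begin{enumerate}
\item $N_v$ is in general merely cocausal (it is null when $v$ is a null normal), not spacelike, and your justification via $\mathrm{span}\{v,N_v\}$ is wrong when $v$ is not normal at $p$. Cocausality is all you use, and it holds because $N_v$ is orthogonal to the hyperplane $\mathrm{span}(T_p\Gamma,v)$, which contains a causal vector.
\item The constancy of the inward sign along the family of projections needs a short argument. The paper uses the barycenter; alternatively, the condition $\langle q-p,\xi\rangle\ge0$ for all $q\in\Gamma$ is closed under limits.
\item A ``bisector'' of the sector between two null rays is not canonical in a Lorentzian plane. You can instead take $N$, as the paper does, to be the inward unit normal of a single fixed timelike projection, which is automatically smooth and globally consistent.
\end{enumerate}
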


\begin{proof}
First suppose that $\Gamma$ is convex-like.
Fix a future-pointing unit timelike vector $e_0$, and let
$P_0$ be the orthogonal projection onto the spacelike hyperplane
$H_0=e_0^\perp$.
Since $\Gamma$ is spacelike, $P_0|_\Gamma$ is an immersion.
By the convex-like assumption, it is therefore a smooth embedding
onto the boundary of a convex body in $H_0$.

Let $N_p\in H_0$ be the inward unit normal of this convex
hypersurface at $P_0(p)$, regarded as an ambient vector.
Then $N$ is a smooth unit spacelike normal field along $\Gamma$.
Indeed, for $v\in T_p\Gamma$,
\[
\langle N_p,v\rangle
=
\langle N_p,dP_0(v)\rangle_E
=0,
\]
where $\langle\cdot,\cdot\rangle_E$ is the Euclidean inner product
on $H_0$ induced by the Lorentzian metric.
Convexity also gives
\[
\langle q-p,N_p\rangle\ge 0
\qquad\text{for every }q\in\Gamma,
\]
and hence
\[
\langle h(v,v),N_p\rangle\ge 0.
\]

Let $T$ be the future-pointing unit timelike normal field
orthogonal to $N$.
Thus
\[
\langle N,N\rangle=1,\qquad
\langle T,T\rangle=-1,\qquad
\langle N,T\rangle=0.
\]
Fix $p\in\Gamma$ and $v\in T_p\Gamma$, and write
\[
h(v,v)=aN_p+bT_p.
\]
We show that $a\ge |b|$.

For $\theta\in\mathbb{R}$, define the constant ambient vectors
\[
u_\theta=\sinh\theta\,N_p+\cosh\theta\,T_p,
\qquad
\zeta_\theta=\cosh\theta\,N_p+\sinh\theta\,T_p.
\]
Then $u_\theta$ is unit timelike, $\zeta_\theta$ is unit
spacelike, and
\[
\langle u_\theta,\zeta_\theta\rangle=0.
\]
Let $P_\theta$ be the orthogonal projection onto
$u_\theta^\perp$.
The vector $\zeta_\theta$ is a unit normal to
$P_\theta(\Gamma)$ at $P_\theta(p)$.

We verify that $\zeta_\theta$ is the inward normal for every
$\theta$.
Let
\[
c=\frac{1}{\operatorname{Vol}(\Gamma)}
  \int_\Gamma q\,dV_\Gamma(q),
\]
be the barycenter of $\Gamma$, where $dV_\Gamma$ is the volume measure of the induced
Riemannian metric.
For every $\theta$, the point $P_\theta(c)$ must lie in the
interior of the convex body bounded by $P_\theta(\Gamma)$. Consequently,
\[
\langle c-p,\zeta_\theta\rangle\ne 0
\qquad\text{for every }\theta.
\]
This expression depends continuously on $\theta$ and is
positive at $\theta=0$, so it remains positive for all $\theta$, proving that
$\zeta_\theta$ is inward-pointing.
Thus
\[
\langle q-p,\zeta_\theta\rangle\ge 0
\qquad\text{for every }q\in\Gamma.
\]
Taking the second derivative along a smooth curve in
$\Gamma$ through $p$ with initial velocity $v$ gives
\[
0\le
\langle h(v,v),\zeta_\theta\rangle
=
a\cosh\theta-b\sinh\theta.
\]
Dividing by $\cosh\theta$ and letting $\theta\to\pm\infty$,
we obtain
\[
a-b\ge 0,\qquad a+b\ge 0.
\]
Hence $a\ge |b|$, and therefore
\[
\langle h(v,v),h(v,v)\rangle=a^2-b^2\ge 0.
\]

Conversely, suppose that condition~(2) holds.
After normalization, assume that $\langle N,N\rangle=1$,
and let $T$ be the future-pointing unit timelike normal
field orthogonal to $N$.
For every tangent vector $v$, write
\[
h(v,v)=a(v)N+b(v)T.
\]
The assumptions imply
\[
a(v)\ge 0,\qquad a(v)^2-b(v)^2\ge 0,
\]
so that
\[
a(v)\ge |b(v)|.
\]

Fix any nonzero future-pointing causal vector $u$, and let
$P_u$ be the parallel projection along $\mathbb{R}u$ onto
a fixed spacelike hyperplane $H_0$.
Set
\[
Y=P_u|_\Gamma.
\]
Since no nonzero causal vector is tangent to $\Gamma$,
the map $Y$ is an immersion.

Decompose $u$ into its tangential and normal components:
\[
u=u^\top+\alpha N+\beta T,
\qquad
\alpha=\langle u,N\rangle,\qquad
\beta=-\langle u,T\rangle>0.
\]
Because $u$ is causal and $T\Gamma$ is positive definite,
\[
\beta^2-\alpha^2
=
\langle u^\top,u^\top\rangle-\langle u,u\rangle
\ge 0.
\]
In particular, $\beta\ge|\alpha|$.
Define
\[
\zeta=\beta N+\alpha T.
\]
Then $\zeta$ is nonzero and satisfies
\[
\langle\zeta,u\rangle=0,
\qquad
\langle\zeta,w\rangle=0
\quad\text{for every }w\in T\Gamma.
\]

Let $Z\in H_0$ be the Euclidean representative of the
restriction of the covector $\langle\cdot,\zeta\rangle$
to $H_0$; namely,
\[
\langle z,Z\rangle_E=\langle z,\zeta\rangle
\qquad\text{for every }z\in H_0.
\]
The vector $Z$ is nonzero: otherwise $\zeta$ would annihilate
both $H_0$ and $u$, and hence the entire ambient space.
Moreover, $Z$ is normal to the immersion $Y$, since
$P_uw-w$ is a multiple of $u$.

Let $h_Y$ denote the Euclidean second fundamental form of
$Y$, viewed as a bilinear form on $T\Gamma$.
The unit normal $\widehat N=Z/|Z|_E$ satisfies
\[
\begin{aligned}
\langle h_Y(v,v),\widehat N\rangle_E
&=\frac{\langle h(v,v),\zeta\rangle}{|Z|_E}\\
&=\frac{\beta a(v)-\alpha b(v)}{|Z|_E}\\
&\ge
\frac{\beta a(v)-|\alpha|\,|b(v)|}{|Z|_E}\\
&\ge 0.
\end{aligned}
\]
Thus $Y$ has positive semidefinite second fundamental form
with respect to a smooth global unit normal field.
It follows that $Y$ is locally convex: in a local graph
representation, the corresponding Hessian is positive
semidefinite.

Since $\Gamma$ is compact, the Euclidean metric induced
by $Y$ is complete.
Furthermore, a sphere enclosing $Y(\Gamma)$ has a point
of contact at which $Y$ is locally strictly convex.
By van Heijenoort's global convexity theorem
\cite{Van52}, since
$n-1\ge 2$ and $\Gamma$ is connected, $Y$ is an embedding
onto the boundary of a convex body.
\end{proof}

Consequently, any compact, connected spacelike-convex submanifold in $\mathbb{R}_1^{n+1}$ ($n\geq 3$) without boundary must be convex-like.

\begin{proposition}\label{prop-spacelike-convex-like}
Let $\Gamma^{n-1} \subset \mathbb{R}_1^{n+1}$ be a compact, connected spacelike-convex submanifold without boundary, where $n\geq 3$. Then $\Gamma$ is convex-like.
\end{proposition}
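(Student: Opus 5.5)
Our plan is to verify condition~(2) of Proposition~\ref{convex-like smooth equivalent} and then invoke that proposition, since $\Gamma$ is already assumed smooth, compact, connected, spacelike and without boundary with $n\ge 3$. The first half of condition~(2) is immediate: spacelike-convexity gives $\langle h(v,v),h(v,v)\rangle>0$ for unit $v$, and by homogeneity $\langle h(v,v),h(v,v)\rangle\ge 0$ for all $v\in T_p\Gamma$ (equality only at $v=0$). The work is to produce a smooth spacelike normal field $N$ with $\langle h(v,v),N\rangle\ge 0$ everywhere.

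At each point the normal space $N_p\Gamma$ is a $2$-dimensional Lorentzian plane, and its spacelike vectors form two opposite open cones $C_p^+\cup C_p^-$ bounded by the two null lines. For fixed $p$, the map $v\mapsto h(v,v)$ on the unit sphere of $T_p\Gamma$ is continuous, never zero, and takes values in $C_p^+\cup C_p^-$. Since $n-1\ge 2$, this unit sphere is connected, so the image lies in a single cone; call it $C_p$. Moreover $h(-v,-v)=h(v,v)$, which is consistent with this. Because $h$ is smooth in $p$ and the two cones vary continuously, the choice $p\mapsto C_p$ is locally constant in the sense of a continuous selection. I would then define $N_p$ as a canonical element of $C_p$, for instance the normalized average
\[
N_p=\int_{S(T_p\Gamma)} h(v,v)\,d\sigma(v)=\tfrac{\omega}{n-1}\,H_p
\]
(a positive multiple of the mean curvature vector $H$). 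Since $C_p$ is a convex cone and each $h(v,v)\in C_p$, $N_p\in C_p$ is spacelike, and $N=H$ up to a positive constant is globally smooth.

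It remains to check $\langle h(v,v),N_p\rangle\ge 0$. For two vectors $X,Y$ in the same open spacelike cone of a $2$-dimensional Lorentz plane, $\langle X,Y\rangle>0$: writing them in null coordinates $X=x_1\ell_1+x_2\ell_2$ with $\langle\ell_1,\ell_2\rangle=1/2$, a cone is $\{x_1x_2>0\}$ with fixed sign, and $\langle X,Y\rangle=(x_1y_2+x_2y_1)/2>0$. Hence $\langle h(v,v),H\rangle>0$ for $v\ne0$. Applying Proposition~\ref{convex-like smooth equivalent} then gives that $\Gamma$ is convex-like.

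The main subtle step is the connectedness argument placing all $h(v,v)$ in one cone at each point. This relies on $\dim\Gamma\ge 2$, and it is exactly where the curve case fails and requires an index assumption (cf.\ Remark~\ref{remark-index-1}). Beyond that, smoothness of $N$ comes for free by taking $N$ proportional to the mean curvature vector, so no global selection issue arises.
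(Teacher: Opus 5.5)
Your proof is correct and takes essentially the paper's route. The paper notes that two spacelike vectors in the Lorentzian normal plane are never orthogonal, so continuity (connectedness of the unit tangent sphere, using $n-1\ge 2$) gives $\langle h(v,v),h(u,u)\rangle>0$, and it then takes $N=H$ in Proposition~\ref{convex-like smooth equivalent}; your one-cone argument in null coordinates is an explicit version of that same continuity step.
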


\begin{proof}
For every $p\in \Gamma$ and $0\neq u,v\in T_p\Gamma$, $h(v,v),h(u,u)$ are two spacelike vectors in $(T_p\Gamma)^\perp \cong \mathbb{R}_1^2$. Hence $\langle h(v,v),h(u,u)\rangle \neq 0$. By continuity,
\[
\langle h(v,v),h(u,u)\rangle > 0.
\]

Choose $N(p) = H(p) =  \sum h(e_i,e_i)$ (where $\{e_i\}$ is an orthonormal basis of $T_p\Gamma$) be the mean curvature vector, then $H$ is spacelike and $\langle H,h(v,v)\rangle > 0$ for each $v$. Proposition \ref{convex-like smooth equivalent} completes the proof.
\end{proof}

\begin{remark}\label{remark-index-1}
For the case where $n=2$, we refer to \cite[Lemma 2.1]{Ye2}, which shows that a closed strong spacelike curve with index $1$ is convex-like.
\end{remark}

Compared to spacelike convexity, the notion of essentially cocausal immersion given by Fukui, Honda and Umehara \cite[Definition 1]{Fukui} replaces the spacelikeness of tangent space and curvature vector by the cocausal property.

\begin{definition}\label{def-essentially-cocausal}
Let $F:M^{n-1} \to \mathbb{R}_1^{n+1}$ be a smooth immersion. Then we say F is an essentially cocausal immersion if:

(a)$dF(T_pM)$ is cocausal for each $p \in M$;

(b)for any point $p\in M$, any nonzero tangent vector $X\in T_pM$ and any spacelike normal vector $n$ at $F(p)$, there holds
\[
\langle\nabla_XdF(\widetilde X),n\rangle\neq0,
\]
where $\widetilde X$ is any local tangent vector field with $\widetilde X(p)=X$. The displayed normal quadratic form is independent of the extension $\widetilde X$.
\end{definition}

They proved that an essentially cocausal immersed submanifold projects onto a locally strictly convex hypersurface under any timelike projection \cite[Theorem 4]{Fukui}. However, such submanifolds may admit null tangent directions and hence are not necessarily convex-like (see Theorem~\ref{uniformly spacelike})

To better understand the relationships and distinctions among these notions, we give the following examples.

\begin{example}\label{example}

(a)\textbf{Submanifolds in hyperplanes}. Suppose $\Sigma$ is a hyperplane in $\mathbb{R}_1^{n+1}$ and $\Gamma$ is a closed convex hypersurface in $\Sigma$.

$\Gamma$ is convex-like if and only if $\Sigma$ is spacelike, not requiring the smoothness of $\Gamma$.

Suppose $M^{n-1}$ is a Riemannian manifold and $i:M^{n-1}\to\Gamma$ is a smooth immersion. Then $i$ is an essentially cocausal immersion if and only if $\Sigma$ is cocausal and the second fundamental form of $i$ as a hypersurface of $\Sigma$, computed using any Euclidean inner product on $\Sigma$, is definite at every point of $M$. The definiteness condition is equivalently expressed by the following: for every local tangent vector field $X$ with $X(p)\neq0$,
\[
\nabla_{X(p)}di(X)\notin di(T_pM).
\]

Let $\Gamma$ be smooth. Then $\Gamma$ is spacelike convex if and only if $\Sigma$ is spacelike and the second fundamental form of $\Gamma$ is definite at each point of $\Gamma$.

(b)\textbf{Polygons in $\mathbb{R}_1^3$}. If a closed polygon in $\mathbb{R}_1^3$ is convex-like, then each of its edges $e_i$ is spacelike and each $e_i \times_L e_{i+1}$ is zero or causal. Moreover, let $v$ be any timelike vector, then $\langle e_i \times_L e_{i+1},v\rangle$ and $\langle e_j \times_L e_{j+1},v\rangle$ cannot have opposite signs. Here $\times_L$ denotes the Lorentzian outer product, i.e.,
\[
(x_1,x_2,x_3) \times_L(y_1,y_2,y_3) = (x_3y_2-x_2y_3,x_1y_3-x_3y_1,x_1y_2-x_2y_1).
\]

(c)\textbf{A family of nonplanar curves}.
Consider the closed curves
\[
\Gamma_a(\theta)
=
(\cos\theta,\sin\theta,a\cos 2\theta),
\qquad
\theta\in\mathbb{R}/2\pi\mathbb{Z},
\qquad
0<a<\frac12,
\]
in $\mathbb{R}_1^3$.
Each $\Gamma_a$ is a smooth, uniformly spacelike embedded curve
whose vertical projection is the unit circle. Simple calculation yields:
\begin{enumerate}
\item If $0<a<\frac14$, then $\Gamma_a$ is strong spacelike,
 essentially cocausal, and convex-like.
\item If $a=\frac14$, then $\Gamma_a$ is essentially cocausal
and convex-like, but is not strong spacelike.
\item If $\frac14<a<\frac12$, then $\Gamma_a$ is not strong spacelike,
not essentially cocausal, and not convex-like, despite being uniformly spacelike
and having a convex vertical projection.
\end{enumerate}
Thus, convex-like curves may have null osculating planes,
whereas uniform spacelikeness and convexity of a single
projection do not imply the convex-like property.
\end{example}


The following theorem is a generalization of the Section Lemma \cite[ Lemma 2.2]{Ye2}.

\begin{theorem}\label{intersection with timelike hyperplane}
Let $\Gamma^{n-1}$ be a convex-like submanifold of $\mathbb{R}_1^{n+1}$ and $\Sigma$ is a timelike hyperplane, where $n\geq 3$. If $\Gamma$ crosses $\Sigma$ $($i.e. $\Gamma$ lies in both sides of $\Sigma$$)$, then $\Gamma \cap \Sigma$ is a convex-like submanifold of $\Sigma$.
\end{theorem}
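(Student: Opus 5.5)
The plan is to work entirely with causal projections, reducing to Euclidean convex geometry. $\Sigma$ is a timelike hyperplane, hence isometric to $\mathbb{R}^n_1$. A causal projection inside $\Sigma$ is $P_{v,\Sigma'}$ with $v\in\Sigma$ causal and $\Sigma'\subset\Sigma$ a spacelike $(n-1)$-plane. Extend it to a causal projection of $\mathbb{R}^{n+1}_1$ as follows. Choose a spacelike hyperplane $H\supset\Sigma'$ of $\mathbb{R}^{n+1}_1$ transversal to $v$; this is possible since $\Sigma'$ is spacelike and we may tilt it by a spacelike direction normal to $\Sigma$. Let $P=P_{v,H}$. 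Because $v\in\Sigma$, we have $P(\Sigma)=H\cap\Sigma=\Sigma'$ and $P(\Sigma)$ is a hyperplane of $H$. Moreover, $P|_\Sigma$ coincides with the projection $P_{v,\Sigma'}$ computed inside $\Sigma$. The fibers of $P$ are lines parallel to $v\subset\Sigma$, so they lie in $\Sigma$ or miss it; hence $P^{-1}(\Sigma')=\Sigma$.

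By the convex-like hypothesis, $P|_\Gamma$ is a homeomorphism onto $\partial K$, where $K\subset H$ is a bounded open convex set. Since $P^{-1}(\Sigma')=\Sigma$ and $P|_\Gamma$ is injective, $P|_\Gamma$ restricts to a homeomorphism from $\Gamma\cap\Sigma$ onto $\partial K\cap\Sigma'$. Since $\Gamma$ lies on both sides of $\Sigma$, $\partial K$ meets both open sides of $\Sigma'$ in $H$, so the hyperplane $\Sigma'$ cuts through the interior of $K$. The standard Euclidean fact then gives $\partial K\cap\Sigma'=\partial_{\Sigma'}(K\cap\Sigma')$, where $K\cap\Sigma'$ is a bounded open convex subset of $\Sigma'$. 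The only subtle point is to rule out the case where $\Sigma'$ is a supporting hyperplane. That case is excluded because points of $\partial K$ on both strict sides, together with convexity of $\overline K$, give points of $K$ on both sides, so $K\cap\Sigma'\neq\emptyset$. This yields the projection condition for every causal projection of $\Sigma$.

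It remains to check that $\Gamma\cap\Sigma$ is an $(n-2)$-dimensional topological submanifold of $\Sigma$. This follows from the above: $\Gamma\cap\Sigma$ is homeomorphic to the boundary of an $(n-1)$-dimensional bounded convex body, which is a topological sphere $S^{n-2}$. It is embedded, being a compact subset mapped homeomorphically.

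The main obstacle I expect is the extension step: checking that every causal projection of $\Sigma$ (with arbitrary spacelike target $\Sigma'$, not only orthogonal ones) arises as the restriction of a causal projection of the ambient space with a spacelike target $H$ transversal to $v$. I would do this explicitly. Let $\nu$ be the unit spacelike normal of $\Sigma$ and set $H=\Sigma'\oplus\mathbb{R}\nu$. Then $H$ is spacelike, since $\Sigma'$ is spacelike and orthogonal to $\nu$. It is transversal to $v$ because $v\notin\Sigma'$ and $v\in\Sigma$, so $v$ is not in $\Sigma'\oplus\mathbb{R}\nu$. Translating affinely to match base points finishes the extension. The dimension assumption $n\ge3$ ensures $\Gamma\cap\Sigma$ has dimension at least $1$, matching the definition.
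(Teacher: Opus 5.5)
Your proposal is correct and follows the paper's proof: you extend the intrinsic causal projection of $\Sigma$ to an ambient causal projection whose spacelike target meets $\Sigma$ exactly in $\Sigma'$, then use $P^{-1}(\Sigma')=\Sigma$ and the injectivity of $P|_\Gamma$ to identify $\Gamma\cap\Sigma$ with the section $\partial K\cap\Sigma'$. Your explicit choice $H=\Sigma'\oplus\mathbb{R}\nu$ and your argument that crossing forces $\Sigma'$ to meet the interior of $K$ fill in details that the paper leaves implicit (the paper only says $\Sigma_1$ is ``properly chosen'' and that convexity is ``guaranteed by $\Gamma$ crossing $\Sigma$'').
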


\begin{proof}
For any causal vector $v \parallel \Sigma$ and any $(n-1)$-dimensional spacelike subspace $\Sigma'$ not containing $v$, consider the projection $P_{v,\Sigma'}:\Sigma\to \Sigma'$. It is the restriction of a causal projection $\hat{P}_{v,\Sigma_1}:\mathbb{R}_1^{n+1}\to \Sigma_1$, where $\Sigma_1$ is properly chosen so that $\Sigma' = \Sigma \cap \Sigma_1$. $\hat{P}_{v,\Sigma_1}|_{\Gamma}$ is a homeomorphism from $\Gamma$ to the convex hypersurface $\hat{P}_{v,\Sigma_1}(\Gamma) \subset \Sigma_1$, so
\[
P_{v,\Sigma'}|_{\Gamma \cap \Sigma} = \hat{P}_{v,\Sigma_1}|_{\Gamma \cap \Sigma}
\]
is a homeomorphism from $\Gamma \cap \Sigma$ to the convex hypersurface (guaranteed by $\Gamma$ crossing $\Sigma$) $\hat{P}_{v,\Sigma_1}(\Gamma)\cap\Sigma \subset \Sigma'=\Sigma_1\cap\Sigma$.
\end{proof}

A fundamental and useful theorem about convex bodies is the existence of supporting hyperplane. Correspondingly, convex-like submanifolds are supported by a pair of null hyperplanes at each point.

\begin{definition}
Given a set $\Sigma \subset \mathbb{R}_1^{n+1}$ and a point $x \in \Sigma$. A \emph{null roof} (or simply a \emph{roof}) of $\Sigma$ at $x$ is a pair of different null hyperplanes $(\pi^+, \pi^-)$ that satisfy:

(a)$x \in \pi^+ \cap \pi^-$;

(b)$\Sigma $ is below $\pi^+$ and above $\pi^-$.

If the pair exists, we call $\pi^+$ an \emph{upper null support} of $\Sigma$ at $x$, and $\pi^-$ a \emph{lower null support} of $\Sigma$ at $x$.

\end{definition}

\begin{theorem}\label{existence of roof}
Let $\Gamma$ be an $(n-1)$-dimensional topological submanifold in $\mathbb{R}_1^{n+1}$. If $\Gamma$ is convex-like, then for every $x \in \Gamma$ there exists a null roof of $\Gamma$ at $x$.

\end{theorem}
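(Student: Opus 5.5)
The plan is to obtain the upper null support from a supporting hyperplane of an auxiliary convex set that already builds in the past-causal directions, and then to use the null projections in Definition~\ref{def-convex-like} to force that hyperplane to be null. The lower null support follows in the same way after reversing the time-orientation.

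Fix $x\in\Gamma$. Let $J^-$ be the closed past causal cone at the origin, and set
\[
C=\mathrm{Conv}(\Gamma)+J^- .
\]
This is a closed convex set, since $\Gamma$ is compact. For a future null vector $\ell$, the hyperplane $x+\ell^\perp$ is null, and $\Gamma$ lies below it exactly when $\langle y-x,\ell\rangle\ge 0$ for all $y\in\Gamma$. I will check that this is the same as $C$ lying below $x+\ell^\perp$. Any supporting hyperplane of $C$ has a normal covector in the dual of $J^-$, so such a hyperplane is automatically cocausal: spacelike or null. The goal therefore splits into two steps:
\begin{enumerate}
\item show that $x\in\partial C$;
\item show that the normal cone $N_x(C)$ contains a null covector, not only timelike ones, which correspond to spacelike supporting hyperplanes.
\end{enumerate}

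For the computations I will use explicit coordinates. Write $\ell=(e,1)$ with $|e|_E=1$, and project along $\ell$ onto $\{x_{n+1}=0\}$ by $P_\ell(w,t)=w-te$. A direct computation shows that $P_\ell$ maps the null hyperplane $x+\ell^\perp$ onto the hyperplane $\{z: z\cdot e=P_\ell(x)\cdot e\}$ of $\{x_{n+1}=0\}$. Moreover, $\langle y-x,\ell\rangle=(P_\ell(y)-P_\ell(x))\cdot e$. Hence $x+\ell^\perp$ is an upper null support at $x$ if and only if $e$ is an inward normal of the convex body $K_e$ bounded by $P_\ell(\Gamma)$ at the boundary point $P_\ell(x)$. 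By Definition~\ref{def-convex-like}, $P_\ell(x)$ does lie on $\partial K_e$ for every $e$. Let $F(e)\subset S^{n-1}$ be the set of inward unit normals of $K_e$ at $P_\ell(x)$. Each $F(e)$ is nonempty, closed and spherically convex, and $e\mapsto F(e)$ has closed graph, because $P_\ell(\Gamma)$ depends continuously on $e$ in the Hausdorff sense. So the theorem reduces to finding a point with $e\in F(e)$ (upper support), together with a point with $-e'\in F(e')$ (lower support, or equivalently the time-reversed statement).

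The main obstacle is this fixed-point step. A plain degree argument for a continuous selection $f$ of $F$ gives only a weak conclusion. If $f$ has neither a fixed point nor an antipodal point, then $1=\deg f=(-1)^n$, and this yields only the weaker alternative for even $n$. I would therefore use a homotopy instead. Replace $\ell$ by $\ell_s=(se,1)$ with $s\in[0,1]$, so that the projections are timelike for $s<1$, and consider the analogous correspondences $F_s$.

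At $s=0$ the set $F_0(e)=F_0$ is independent of $e$. The hyperplane obtained by lifting a normal $e\in F_0$ is the timelike hyperplane through $x$ whose projection supports $P_{(0,1)}(\Gamma)$. For each $s$, I would take the solution set
\[
\{(e,s): e\in F_s(e)\}
\]
and prove that it contains a connected component running from $s=0$ to $s=1$. For this I plan to use the convex-like hypothesis for every causal projection along $\ell_s$. It supplies compactness through the closed graph, and it excludes escape through the boundary, because for every $s$ the point $P_{\ell_s}(x)$ remains on $\partial K$. Concretely, I would approximate $F_s$ by continuous selections in the sense of Cellina, and then use a Leray--Schauder-type continuation for the fixed points on the ball parameterized by $s\,e$. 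This continuation step is where I expect the real difficulty to lie.

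As a fallback, I would argue directly on $N_x(C)$. Suppose every covector in $N_x(C)$ were timelike. Then $x+\varepsilon\ell$ would lie in the interior of $C$ for some past null $\ell$. Projecting along $\ell$, this should place $P_\ell(x)$ in the interior of $K$, which contradicts $P_\ell(x)\in\partial K$.
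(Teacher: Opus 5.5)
There is a genuine gap: neither of your two routes establishes step (2). The main route stops exactly at the decisive point. The existence of a solution of $e\in F_s(e)$ at $s=1$ is only announced through Cellina approximation and a Leray--Schauder continuation, and you say yourself that this is where the difficulty lies.

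The route could be closed more cheaply than by continuation. The correspondences $F_s$ are jointly upper semicontinuous, and their values lie in open hemispheres because normal cones of convex bodies with interior are pointed. They deform $F_1$ into the constant correspondence $F_0$. Hence approximate continuous selections of $F_1$ have degree $0$, not $\pm1$ as in your computation $1=\deg f=(-1)^n$. A degree-$0$ self-map of $S^{n-1}$ has both a fixed point and an antipodal point, and the closed graph passes both to $F_1$. But none of this is in your proposal.

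The fallback fails. Since $\ell$ generates $\partial J^-$, the image $P_\ell(J^-)$ is not a point but the open half-space $\{z:\langle z,\ell\rangle<0\}$ together with $0$. Therefore
\[
P_\ell(C)=K\cup\{z:\langle z,\ell\rangle<\max_{\Gamma}\langle\cdot,\ell\rangle\}.
\]
So $x+\varepsilon\ell\in\operatorname{int}C$ only puts $P_\ell(x)$ in the interior of $P_\ell(C)$. This is compatible with $P_\ell(x)\in\partial K$ whenever $\langle x,\ell\rangle<\max_\Gamma\langle\cdot,\ell\rangle$. Excluding that case would require choosing $\ell$ so that $-\ell$ is already an upper null normal, which is circular.

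The missing idea is that you should not try to force a single one-sided support to be null. The paper uses the upper and lower cocausal supports together.

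First, apply the convex-like property to the timelike projection along $a-x$. This shows that $\mathrm{Conv}(\Gamma)$ misses both $\mathrm{Timecone}^{\pm}(x)$, which is also what proves your unproved step (1). By separation there are cocausal hyperplanes $\Sigma^+,\Sigma^-$ through $x$ with $\mathrm{Conv}(\Gamma)$ below $\Sigma^+$ and above $\Sigma^-$. In your notation, these are any supporting hyperplanes at $x$ of $C$ and of $\mathrm{Conv}(\Gamma)+J^+$.

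When $\mathrm{Conv}(\Gamma)$ is full-dimensional these two hyperplanes are distinct; the lower-dimensional case is trivial. Moreover $\Sigma^+\cap\Sigma^-$ is spacelike. Indeed, a degenerate intersection would contain a null $\ell$, and $\ell^\perp$ is the only cocausal hyperplane containing $\ell$, forcing $\Sigma^+=\Sigma^-$.

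Now project onto the Lorentz $2$-plane $(\Sigma^+\cap\Sigma^-)^\perp$. There the image of $\mathrm{Conv}(\Gamma)$ lies in a wedge between two cocausal lines through the image of $x$. That wedge lies inside the wedge between the two null lines through the same vertex, and their preimages form the roof. No fixed-point theory is needed.
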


\begin{proof}

For any causal projection $P$, $P$ is linear so
\[P(\mathrm{Conv}(\Gamma)) = \mathrm{Conv}(P(\Gamma)),\]
hence
\[P(\Gamma) = \partial \mathrm{Conv}(P(\Gamma)) = \partial P(\mathrm{Conv}(\Gamma)).\]

Take a coordinate system $(x^1,...,x^{n+1})$ and let $x$ be any point in $\Gamma$.

If $\mathrm{Conv}(\Gamma)$ is not full-dimensional, then $\Gamma$ lies in a spacelike hyperplane, and the conclusion is immediate. Hence we may assume that $\mathrm{Conv}(\Gamma)$ has nonempty interior.

Claim: $\mathrm{Conv}(\Gamma)$ does not intersect $\mathrm{Timecone}^+(x)$.

If $\mathrm{Conv}(\Gamma)$ intersects $\mathrm{Timecone}^+(x)$ then there exists a Euclidean ball $B(a,r) \subset \mathrm{Conv}(\Gamma) \cap \mathrm{Timecone}^+(x)$. Consider the timelike projection $P$ along $a-x$, then $P(x)$ is in the interior of $(P(\mathrm{Conv}(\Gamma))) $, which contradicts to $P(\Gamma) = \partial P(\mathrm{Conv}(\Gamma))$.

Step 1: Find a supporting hyperplane of $\mathrm{Conv}(\Gamma)$ at $x$.

Without loss of generality suppose $x = (0,...,0)$. Consider a sequence of $n$-dimensional closed disks:
\[
D_k = \{(x_1,...,x_n,1/k)|x_1^2+...+x_n^2 \leq (\frac{k-1}{k^2})^2\}.
\]
Then $D_k$ does not intersect $\mathrm{Conv}(\Gamma)$.

Suppose $p_k \in \mathrm{Conv}(\Gamma)$ and $q_k \in D_k$ satisfy
\[
\mathrm{dist}(p_k,q_k) = \mathrm{dist}(\mathrm{Conv}(\Gamma),D_k),
\]
let $v_k = \frac{q_k-p_k}{\mathrm{dist}(p_k,q_k)}$ and $v$ be an accumulation point of $\{v_k\}$. We calculate that
\begin{align*}
    \mathrm{dist}(v_k,x) &\leq \mathrm{dist}(v_k,v_k+p_k)+\mathrm{dist}(v_k+p_k,p_k)+\mathrm{dist}(p_k,x)\\
&= \mathrm{dist}(v_k,v_k+p_k)+\mathrm{dist}(v_k+p_k,\mathrm{Conv}(\Gamma))+\mathrm{dist}(p_k,x)\\
&\leq 2\mathrm{dist}(v_k,v_k+p_k)+\mathrm{dist}(v_k,\mathrm{Conv}(\Gamma))+\mathrm{dist}(p_k,x)\\
&= 3\mathrm{dist}(p_k,x) + \mathrm{dist}(v_k,\mathrm{Conv}(\Gamma)) \\
&\leq 3(\mathrm{dist}(p_k,q_k)+\mathrm{dist}(q_k,x)) + \mathrm{dist}(v_k,\mathrm{Conv}(\Gamma))\\
&\leq 6\mathrm{dist}(q_k,x) + \mathrm{dist}(v_k,\mathrm{Conv}(\Gamma)) \\
&\leq \frac{12}{k} + \mathrm{dist}(v_k,\mathrm{Conv}(\Gamma)) .
\end{align*}

So $\mathrm{dist}(v,\mathrm{Conv}(\Gamma)) = \mathrm{dist}(v,x)$.

Step 2: Estimate $x^{n+1}(v)$ and prove $v^{\perp_E}$ is cocausal.

From the definition of $v$ we know:
\[
\langle x,v_k \rangle_E < \langle q_k,v_k \rangle_E \leq \inf_{q \in D_k}\langle q,v_k \rangle_E.
\]

Let $\hat{q_k} = (x^1(q_k),...,x^n(q_k))$, $\hat{v_k} = (x^1(v_k),...,x^n(v_k))$, then
\[
0 < \langle\hat{q_k},\hat{v_k}\rangle_E + \frac{x^{n+1}(v_k)}{k} \leq -(\frac{k-1}{k^2})|\hat{v_k}| + \frac{x^{n+1}(v_k)}{k} .
\]

So $x^{n+1}(v_k) > 0$ and $\frac{|\hat{v_k}|}{x^{n+1}(v_k)} < \frac{k}{k-1}$. We conclude that $x^{n+1}(v) \geq \frac{\sqrt{2}}{2}$. Then $v^{\perp_E}$ is a cocausal hyperplane, $x \in v^{\perp_E}$ and $\mathrm{Conv}(\Gamma)$ is below $v^{\perp_E}$.

\Needspace{5\baselineskip}
Step 3: Construct the roof.

By step 2 we know that there exist two cocausal planes $\Sigma^+$ and $\Sigma^-$ such that:

\[
x \in \Sigma^+ \cap \Sigma^-;
\]

$\mathrm{Conv}(\Gamma)$ is below $\Sigma^+$ and above $\Sigma^-$.

The hyperplanes $\Sigma^+$ and $\Sigma^-$ cannot coincide, since otherwise $\mathrm{Conv}(\Gamma)$ would be contained in their common hyperplane, contrary to the full-dimensional assumption.

If $\Sigma^+ \cap \Sigma^-$ is null, then $\Sigma^+ =\Sigma^-$ are null, and $\Gamma$ has to be a subset of a null hyperplane (which is impossible). So $\Sigma^+ \cap \Sigma^-$ is a spacelike $(n-1)$-subspace, and $(\Sigma^+ \cap \Sigma^-)^\perp$ is a Lorentz $2$-plane.

Let $P_0$ be the orthogonal projection to $(\Sigma^+ \cap \Sigma^-)^\perp$. From the straight observation to a Lorentz $2$-plane, we know there exist two null lines $L^+$ and $L^-$ in $(\Sigma^+ \cap \Sigma^-)^\perp$ such that:

\[
L^+\cap L^- = P_0(\Sigma^+ \cap \Sigma^-);
\]

$P_0(\mathrm{Conv}(\Gamma))$ is below $L^+$ and above $L^-$.

Then $(P_0^{-1}(L^+),P_0^{-1}(L^-))$ is a roof of $\Gamma$ at $x$.
\end{proof}

\begin{corollary}
Let $\Gamma$ be a smooth convex-like submanifold. Then $T_p\Gamma$ is spacelike for each $p\in \Gamma$.
\end{corollary}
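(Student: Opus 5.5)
We argue by contradiction: suppose that at some $p\in\Gamma$ the tangent space $T_p\Gamma$ is not spacelike. Then the induced quadratic form on $T_p\Gamma$ is either Lorentzian or degenerate, and in both cases $T_p\Gamma$ contains a nonzero causal vector $w$. After replacing $w$ by $-w$ we may take $w$ to be future-pointing. The plan is to show that such a $w$ is incompatible with the roof at $p$ given by Theorem~\ref{existence of roof}.

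Let $(\pi^+,\pi^-)$ be a null roof of $\Gamma$ at $p$. Choose a smooth curve $\gamma\colon(-\varepsilon,\varepsilon)\to\Gamma$ with $\gamma(0)=p$ and $\gamma'(0)=w$. Write $\pi^\pm=\{y:\langle y-p,\ell^\pm\rangle=0\}$, where $\ell^\pm$ are null vectors normal to $\pi^\pm$. Normalize them so that $\langle y-p,\ell^+\rangle\ge0$ exactly on the past side of $\pi^+$, and $\langle y-p,\ell^-\rangle\ge 0$ exactly on the future side of $\pi^-$. The roof conditions then say that the functions $f_\pm(t)=\langle\gamma(t)-p,\ell^\pm\rangle$ are nonnegative and vanish at $t=0$. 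Hence $t=0$ is an interior minimum of each, so
\[
\langle w,\ell^+\rangle=\langle w,\ell^-\rangle=0 .
\]
It follows that $w\in(\pi^+-p)\cap(\pi^--p)$, the linear subspace $\ell^{+\perp}\cap\ell^{-\perp}$. Because $\pi^+\neq\pi^-$, the null vectors $\ell^+$ and $\ell^-$ are not proportional, so they span a Lorentzian $2$-plane. Its orthogonal complement $\ell^{+\perp}\cap\ell^{-\perp}$ is therefore spacelike; this is the same observation used in Step~3 of the proof of Theorem~\ref{existence of roof}. A nonzero causal vector cannot lie in a spacelike subspace, which gives the contradiction.

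The step that needs the most care is making sure the sign conventions are right, so that \emph{below} $\pi^+$ and \emph{above} $\pi^-$ really translate into one-sided inequalities for $f_\pm$ that are both minimized at $t=0$. In fact the direction of each inequality does not matter. A smooth function that stays on one side of its value at an interior point has vanishing derivative there, whichever side it is. So the only genuine ingredient is that two distinct null hyperplanes through $p$ meet in a spacelike $(n-1)$-plane.
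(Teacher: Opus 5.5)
Your proof is correct and follows the same route as the paper: the roof at $p$ forces every tangent vector to be parallel to both null supports $\pi^+$ and $\pi^-$, hence to lie in their spacelike intersection. Your derivative argument along a curve $\gamma$ is the rigorous version of the paper's shorthand that ``$p+v$ and $p-v$ are below $\pi^+$''; casting it as a contradiction with a causal tangent vector is only a cosmetic difference.
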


\begin{proof}
Let $(\pi^+,\pi^-)$ be a null roof of $\Gamma$ at $p$. It suffices to show that any $v\in T_p\Gamma$ is parallel to $\pi^+\cap\pi^-$.

$\Sigma$ is below $\pi^+$, so $p+v$ is below $\pi^+$ and $p-v$ is below $\pi^+$, hence $v\parallel\pi^+$. Similarly, $v\parallel\pi^-$.
\end{proof}

\begin{proposition}\label{prop-acausal}
Let $\Gamma$ be a convex-like submanifold. Then $\Gamma$ is acausal.
\end{proposition}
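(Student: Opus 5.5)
Acausal means that no two distinct points $x,y\in\Gamma$ satisfy ``$y-x$ is causal''. The quickest route uses the projections directly. Suppose there are distinct $x,y\in\Gamma$ with $v=y-x$ causal; by symmetry we may take $v$ future-pointing or null. Pick any spacelike hyperplane $\Sigma$, say $\{x_{n+1}=0\}$. A nonzero causal vector is never tangent to a spacelike hyperplane, so $v\notin\Sigma$ and $P_{v,\Sigma}$ is a causal projection in the sense of Definition~\ref{def-convex-like}. Since $P_{v,\Sigma}(x)=P_{v,\Sigma}(y)$ with $x\neq y$, the restriction $P_{v,\Sigma}|_\Gamma$ is not injective, hence not a homeomorphism onto its image. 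This contradicts the convex-like assumption, so $\Gamma$ is acausal.

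As a consistency check, and to obtain the stronger pointwise statement used later, I would also derive the result from Theorem~\ref{existence of roof}. Take the roof $(\pi^+,\pi^-)$ at $x$: $\Gamma$ lies below $\pi^+$ and above $\pi^-$, and both hyperplanes pass through $x$. The key observation is that $\pi^+\cap\pi^-$ is a spacelike $(n-1)$-plane (as shown in Step~3 of that proof). Consequently the closed wedge between $\pi^+$ and $\pi^-$ meets $\mathrm{Timecone}^\pm(x)\cup\mathrm{Nullcone}^\pm(x)$ only along the null rays lying in $\pi^\pm$ themselves.

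Concretely, project orthogonally onto the Lorentz $2$-plane $(\pi^+\cap\pi^-)^\perp$. There the wedge becomes the region between two distinct null lines through a point, namely a left or right spacelike quadrant. The projection of a future causal vector from $x$ lands in the future closed quadrant, which meets the spacelike quadrant only on its boundary null rays. So a point $y$ causally related to $x$ would have to lie on $\pi^+$ or $\pi^-$, with $y-x$ null.

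This leaves the null case as the only real obstacle, and it is exactly what the projection argument above handles cleanly: projecting along the null vector $y-x$ is still an admissible causal projection. I would therefore present the first argument as the proof.
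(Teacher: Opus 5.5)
Your first argument is exactly the paper's proof: projecting along the causal chord $y-x$ onto a spacelike hyperplane sends two distinct points of $\Gamma$ to the same point, contradicting the injectivity required by Definition~\ref{def-convex-like}. The roof-based consistency check is not needed. As you observe, on its own it only reduces the question to the null case.
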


\begin{proof}
Suppose, on the contrary, that there exists $p,q\in \Gamma$ such that $p-q$ is causal. Then the projection along $p-q$ maps $p,q$ onto a same point, which contradicts to the definition of convex-like submanifolds.
\end{proof}

In fact, we can obtain a much stronger conclusion than either of the preceding two.

\begin{theorem}\label{uniformly spacelike}
A convex-like submanifold $\Gamma$ is uniformly spacelike, i.e. fix any Lorentzian coordinate,
\[
\sup_{x,y \in \Gamma,x \neq y}\frac{|x-y|_E}{|x-y|_L} < +\infty.
\]
\end{theorem}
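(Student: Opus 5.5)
The plan is to argue by contradiction and use compactness to reduce to a local statement at a single point of $\Gamma$. First I would note that $\Gamma$ is compact: for any causal projection $P$, the restriction $P|_\Gamma$ is a homeomorphism onto the boundary of a bounded convex body. Moreover $\Gamma$ is bounded in the ambient space, since it lies between the members of a roof (Theorem~\ref{existence of roof}) at any one point, and its projection is bounded.

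Suppose the supremum is infinite. Then there are $x_k\neq y_k$ in $\Gamma$ with $|x_k-y_k|_L/|x_k-y_k|_E\to 0$; note $x_k-y_k$ is spacelike by Proposition~\ref{prop-acausal}. Set $u_k=(x_k-y_k)/|x_k-y_k|_E$. After passing to a subsequence, $x_k\to x$, $y_k\to y$ and $u_k\to u$, where $|u|_E=1$ and $\langle u,u\rangle=\lim\langle u_k,u_k\rangle=0$. So $u$ is a null direction; orient it to be future-pointing.

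If $x\neq y$, then $x-y$ is a positive multiple of $u$, so it is null. This contradicts Proposition~\ref{prop-acausal}. Hence the only remaining case is $x=y$, where the chords $x_k-y_k$ shrink to zero while becoming asymptotically null.

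For the case $x=y$ I would use the null projection $P_u=P_{u,\Sigma}$, which is allowed in Definition~\ref{def-convex-like}. Write $\Gamma$ near $x$ as a graph over the convex hypersurface $C=P_u(\Gamma)\subset\Sigma$: there is a continuous height function $f$ on $C$ with $q=P_u(q)+f(P_u q)\,u$. Acausality of $\Gamma$ says exactly that for $z\neq z'$ in $C$ the vector $(z-z')+(f(z)-f(z'))u$ is spacelike. The degeneration hypothesis says that along $z_k=P_u x_k$, $z_k'=P_u y_k$ the quantity $|z_k-z_k'|_L/|f(z_k)-f(z_k')|$ tends to $0$. (If $|f(z_k)-f(z_k')|$ stayed comparable to $|z_k-z_k'|$, the chord would stay away from $u$.) So the horizontal displacement is negligible compared with the displacement along $u$.

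Next I would perform a blow-up at $x$. Rescale by $\lambda_k=|x_k-y_k|_E^{-1}$ around $x_k$ and pass to a Hausdorff limit (on compact sets) of $\lambda_k(\Gamma-x_k)$. The rescaled projected images $\lambda_k(C-z_k)$ are convex boundaries, so they converge to the boundary of a closed convex set $K_\infty$ (a tangent-cone-like limit). The rescaled points $\lambda_k(y_k-x_k)=-u_k\to -u$ force the limit set $\Gamma_\infty$ to contain both $0$ and $-u$. On the other hand, $P_u(-u)=0$. So on $\Gamma_\infty$ the projection $P_u$ identifies two distinct points, and $\Gamma_\infty$ contains a null chord.

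To reach a contradiction I need a limit version of convex-likeness that survives the blow-up. The natural candidate is closedness of \emph{acausality} together with the roof structure: every point of $\Gamma_\infty$ has null roofs obtained as limits of roofs of $\Gamma$ at nearby points. For this I would show that roofs do not degenerate to a single null hyperplane in the limit. The tool is the Claim in the proof of Theorem~\ref{existence of roof}, namely that $\mathrm{Conv}(\Gamma)$ avoids $\mathrm{Timecone}^\pm(x)$, applied with timelike projections along directions $v$ close to $u$. For such $v$, the projection $P_v(\Gamma)$ is again the boundary of a convex body containing the projected barycenter in its interior. A fixed interior ball around the barycenter should then give a uniform lower bound on the "opening" of the convex image in the $u$-direction, and hence an estimate $|f(z)-f(z')|\le (1-\varepsilon)|z-z'|_L$ near $x$. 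This bound directly contradicts the degeneration found in the previous step.

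The main obstacle is this last uniform estimate. Ruling out asymptotically null chords at a single point genuinely uses the convexity of \emph{all} nearby causal projections at once, not just injectivity of one projection, and the non-smooth setting prevents a tangent-plane argument.
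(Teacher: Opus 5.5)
\textbf{There is a genuine gap: the case $x=y$ is never proved.} Your reduction to $x=y$ is correct, but the proof stops exactly where the real work is.

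The blow-up gives nothing by itself. Acausality is not preserved under limits, so a null chord in $\Gamma_\infty$ is not a contradiction, as you acknowledge.

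The uniform estimate $|f(z)-f(z')|\le(1-\varepsilon)|z-z'|_L$ is left unproved. It is also not the right criterion for a graph over a null direction, and it cannot follow from convexity of the projections alone. Take a round sphere in the spacelike hyperplane $\{x_{n+1}=\alpha x_1\}$ with $\tfrac12<\alpha<1$, and project along $u=(1,0,\dots,0,1)$ onto $\{x_{n+1}=0\}$. One finds $f=\tfrac{\alpha}{1-\alpha}z_1$, which violates the estimate, although this $\Gamma$ is convex-like and uniformly spacelike. So any working argument must exploit that $u$ is the limit of the degenerating chord directions.

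The missing idea is to use the roofs directly, rather than trying to show they stay non-degenerate. This is the paper's proof.

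Orient so that $u_k=(x_k-y_k)/|x_k-y_k|_E\to u\in\mathrm{Nullcone}^+$. By Theorem~\ref{existence of roof}, pick an upper null support at $y_k$ with Euclidean-unit future null normal $L_k$. Then $\langle L_k,q-y_k\rangle\ge0$ for all $q\in\Gamma$; pass to a subsequence with $L_k\to L'$.

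Take $q=x_k$, divide by $|x_k-y_k|_E$ and let $k\to\infty$; this gives $\langle L',u\rangle\ge0$. Two future-pointing null vectors have nonpositive inner product, vanishing only when they are parallel, so $L'=u$. Passing to the limit for fixed $q$ then gives $\langle u,q-x\rangle\ge0$ on all of $\Gamma$.

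The same argument with lower null supports at $x_k$ gives $\langle -u,q-x\rangle\ge0$. Their normals are forced to converge to $-u$, because $y_k-x_k$ points along $-u_k$.

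Hence $\Gamma\subset x+u^\perp$, a null hyperplane. This is impossible: projecting along $u$ would map $\Gamma$ into an affine hyperplane of the image, not onto the boundary of a convex body.

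The key point is that the supporting inequalities hold globally on $\Gamma$, so the degenerate limit roof is itself the contradiction. No blow-up, null graph representation, or opening estimate is needed.
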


\begin{proof}
Assume
\[
\sup_{x,y \in \Gamma,x \neq y}\frac{|x-y|_E}{|x-y|_L} = +\infty.
\]
Suppose a sequence of pairs $\{(x_k,y_k)\}$ satisfies
\[
\frac{|x_k-y_k|_E}{|x_k-y_k|_L} \geq k.
\]
By compactness, we may assume that $x_k \rightarrow p$, $y_k \rightarrow p$ (because there is no null chord on a convex-like submanifold) and
\[
\frac{x_k-y_k}{|x_k-y_k|_E} \rightarrow L \in \mathrm{Nullcone}^+.
\]

Let $\pi_k^+$ be an upper null support at $y_k$, $L_k \in (\pi_k^+)^\perp \cap \mathrm{Nullcone}^+$, $|L_k|_E = 1$(in other words, $L_k$ is the Euclidean-unit future-pointing Lorentzian normal vector to $\pi_k^+$). Again by compactness we assume $L_k \rightarrow L'$. By the definition of an upper null support, we have

\[\langle L_k, x_k-y_k \rangle  \geq 0.\]

Let $k \rightarrow \infty$ then

\[\langle L', L \rangle  \geq 0.\]

So $L' = L$.

Then again by the definition of an upper null support, for $q \in \Gamma$ we have:

\[\langle L_k, q-y_k \rangle  \geq 0.\]

Taking limit yields

\[\langle L, q-p \rangle  \geq 0.\]

So the hyperplane
\[
\pi^+ = \{u\in \mathbb{R}_1^{n+1}|\langle L, u-p \rangle  = 0\}
\]
is an upper null support of $\Gamma$ at $p$.

By the same argument, $\frac{y_k-x_k}{|y_k-x_k|_E} \rightarrow -L \in \mathrm{Nullcone}^-$. So the hyperplane
\[
\pi^- = \{u\in \mathbb{R}_1^{n+1}|\langle -L, u-p \rangle  = 0\}
\]
is a lower null support of $\Gamma$ at $p$. Contradiction!
\end{proof}

\section{Towers and Caps}

In this section, we introduce \emph{towers} as the convex body bounded by null supports and \emph{caps} as the null boundaries of towers. The key to the construction of towers and caps is the support function on $\mathrm{Nullcone}^+$ and $\mathrm{Nullcone}^-$.

\begin{definition}
Given any continuous, positively $1$-homogeneous function $M^+:\mathrm{Nullcone}^+ \rightarrow R$, its corresponding \emph{face-up tower} is defined to be
\[
\Omega^+(M^+) = \{x\in \mathbb{R}_1^{n+1} | \langle x,L \rangle >M^+(L),\forall L \in \mathrm{Nullcone}^+\}.
\]

Similarly, given any continuous, positively $1$-homogeneous function $M^-:\mathrm{Nullcone}^- \rightarrow R$, its corresponding \emph{face-down tower} is
\[
\Omega^-(M^-) = \{x\in \mathbb{R}_1^{n+1} | \langle x,L \rangle >M^-(L),\forall L \in \mathrm{Nullcone}^-\}.
\]

We call $M^\pm$ the \emph{support function} of $\Omega^\pm(M^\pm)$.

\end{definition}

\begin{proposition}\label{tower and timecone}
(a)Every tower $\Omega^\pm(M^\pm)$ is a subset of the timelike cone $\mathrm{Timecone}^\mp (x)$ for some $x\in\mathbb{R}_1^{n+1}$.

(b)If $x \in \Omega^\pm(M^\pm)$, then $\overline{\mathrm{Timecone}^\mp (x)} \subset \Omega^\pm(M^\pm)$.
\end{proposition}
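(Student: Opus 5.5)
The plan is to prove both parts directly from the definition, treating the face-up tower $\Omega^+(M^+)$; the face-down case follows by reversing the time orientation, i.e.\ replacing $L$ by $-L$ and swapping future and past. The only Lorentzian fact needed is an elementary sign lemma. Namely, for $w\in\mathbb{R}_1^{n+1}$,
\[
\langle w,L\rangle<0 \ \text{ for every } L\in\mathrm{Nullcone}^+
\iff w \text{ is future-pointing timelike},
\]
and $\langle w,L\rangle\le 0$ holds for every future-pointing causal $w$ and every $L\in\mathrm{Nullcone}^+$. I would check this in coordinates. Write $L=\lambda(l,1)$ with $\lambda>0$ and $|l|=1$, and $w=(\bar w,w_{n+1})$. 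Then $\langle w,L\rangle=\lambda(\bar w\cdot l-w_{n+1})$. Maximizing over $l$ with $l=\bar w/|\bar w|$ (any $l$ if $\bar w=0$) shows that the strict inequality for all $L$ is exactly $w_{n+1}>|\bar w|$. Cauchy--Schwarz gives the non-strict statement.

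\textbf{Part (b).} Let $x\in\Omega^+(M^+)$ and $y\in\overline{\mathrm{Timecone}^-(x)}$. Then $y=x-w$ with $w$ either zero or future-pointing causal. For every $L\in\mathrm{Nullcone}^+$ the sign lemma gives
\[
\langle y,L\rangle=\langle x,L\rangle-\langle w,L\rangle\ge\langle x,L\rangle>M^+(L),
\]
so $y\in\Omega^+(M^+)$.

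\textbf{Part (a).} Consider the compact section $S=\{(l,1): |l|=1\}\subset\mathrm{Nullcone}^+$. Since $M^+$ is continuous, $|M^+|\le C$ on $S$ for some constant $C$. Set $x=(0,\dots,0,C)$, so that $\langle x,L\rangle=-C$ for every $L\in S$. Now take any $y\in\Omega^+(M^+)$. For $L\in S$,
\[
\langle y,L\rangle>M^+(L)\ge -C=\langle x,L\rangle,
\]
so $\langle x-y,L\rangle<0$ on $S$. By positive homogeneity, this strict inequality holds on all of $\mathrm{Nullcone}^+$. The sign lemma then says $x-y$ is future-pointing timelike, i.e.\ $y\in\mathrm{Timecone}^-(x)$. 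If the tower is empty, the inclusion is trivial.

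The only step needing care is the sign lemma, and in particular the direction "strict negativity against all future null vectors implies future timelike." This is where the explicit choice of $l$ parallel to $\bar w$ is used. It is also why continuity together with homogeneity, which make $M^+$ bounded on $S$, is essential: without a uniform bound $C$ there would be no single point $x$ serving as the vertex.
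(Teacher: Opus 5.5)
Your proof is correct and follows essentially the same route as the paper. Part (b) is identical, and for part (a) the paper likewise uses continuity and homogeneity to pick a vertex $x$ with $\langle x,L\rangle$ below $M^+$ on $\mathrm{Nullcone}^+$, then concludes $\Omega^+\subset\mathrm{Timecone}^-(x)$; you make explicit the choice $x=(0,\dots,0,C)$ and the characterization of future-pointing timelike vectors by their pairings with future null vectors, which the paper uses without comment.
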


\begin{proof}
(a)Without loss of generality take a face-up tower. $M^+$ is homogeneous and continuous, so there exists $x \in \mathbb{R}_1^{n+1}$ that
\[
\langle x,L^+ \rangle < M(L^+),\forall L^+ \in \mathrm{Nullcone}^+.
\]
This shows $\Omega^+ \subset \mathrm{Timecone}^-(x)$.

(b)If $y \in \overline{\mathrm{Timecone}^\mp (x)}$, then
\[
\langle x,L^\pm \rangle \leq \langle y,L^\pm \rangle,\forall L^\pm \in \mathrm{Nullcone}^\pm,
\]
hence $y \in \Omega^\pm(M^\pm)$.
\end{proof}

\begin{definition}[The corresponding tower]
For a bounded subset $\Gamma\subset \mathbb{R}_1^{n+1}$ we define its corresponding face-up tower
\[
\Omega^+(\Gamma) = \{x\in \mathbb{R}_1^{n+1} | \langle x,L \rangle >\inf_{p\in \Gamma} \langle p,L \rangle ,\forall L \in \mathrm{Nullcone}^+\};
\]
and its corresponding face-down tower
\[
\Omega^-(\Gamma)=\{x\in \mathbb{R}_1^{n+1} | \langle x,L \rangle >\inf_{p\in \Gamma} \langle p,L \rangle ,\forall L \in \mathrm{Nullcone}^-\}.
\]

In other words, the support function of $\Omega^\pm(\Gamma)$ is
\[
M^\pm(L) = \inf_{p\in \Gamma} \langle p,L \rangle
\]
for $L\in \mathrm{Nullcone}^\pm$. Hence we call $M^\pm$ the \emph{upper(lower) support function} of $\Gamma$.
\end{definition}

\begin{proposition}\label{nonempty corresponding towers}
If $\Gamma$ is a convex-like submanifold, then
\[
\Omega^+(\Gamma)\cap\Omega^-(\Gamma)\neq\emptyset.
\]
We denote by
\[
\Omega(\Gamma) = \Omega^+(\Gamma)\cap\Omega^-(\Gamma)
\]
the \emph{acausal interior} of $\Gamma$.
\end{proposition}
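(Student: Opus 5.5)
We need to show that some point $x$ satisfies $\langle x,L\rangle > \inf_{p\in\Gamma}\langle p,L\rangle$ for every future null $L$ and also for every past null $L$. The plan is to produce this point explicitly from a convex projection, rather than from any compactness argument on the null cone.

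Fix a Lorentzian coordinate with unit timelike vector $e=(0,\dots,0,1)$, and let $P$ be the orthogonal projection along $e$ onto the spacelike hyperplane $\Sigma=e^\perp$. By Definition~\ref{def-convex-like}, $P(\Gamma)$ is the boundary of a bounded open convex set $K\subset\Sigma$. Pick any $y\in K$ and consider the vertical line $\ell=y+\mathbb{R}e$. I will look for $x$ on $\ell$.

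First I show that no point of $\Gamma$ is causally related to a suitable point of $\ell$. For each $t$, the point $x_t=y+te$ lies in $\Omega^+(\Gamma)$ if and only if $\langle x_t-p,L\rangle>0$ for all $p\in\Gamma$ and all $L\in\mathrm{Nullcone}^+$. The infimum over $\Gamma$ is attained by compactness, since a convex-like $\Gamma$ is homeomorphic to the boundary of a bounded convex set. So this condition is equivalent to $\langle x_t-p,L\rangle>0$ for every $p\in\Gamma$ and every future null $L$. That condition says exactly that $x_t-p$ is past-pointing causal and nonzero (up to the null-edge case), i.e. $x_t\in\overline{\mathrm{Timecone}^-(p)}$ minus the null rays where the pairing vanishes. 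More carefully: $\langle w,L\rangle>0$ for all future null $L$ iff $w$ is past-pointing timelike. Hence
\[
\Omega^+(\Gamma)=\bigcap_{p\in\Gamma}\mathrm{Timecone}^-(p),
\]
and similarly
\[
\Omega^-(\Gamma)=\bigcap_{p\in\Gamma}\mathrm{Timecone}^+(p).
\]
On the face of it this would put $x$ both in the past and in the future of every $p$, which is impossible. So the sign conventions must be checked.

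With signature $(+,\dots,+,-)$ and $L$ future null, $\langle e,L\rangle<0$. So $\langle w,L\rangle>0$ for all future null $L$ means $w$ is past timelike. Thus $\Omega^+(\Gamma)$ is the set of points lying in the past of all of $\Gamma$, and $\Omega^-(\Gamma)$ the set lying in the future of all of $\Gamma$. These are disjoint, which contradicts the claim.

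The infimum, however, is not attained pointwise in the required form. Indeed $\langle x,L\rangle>\inf_p\langle p,L\rangle$ only requires $x$ to beat the smallest value, not every $p$. So the correct reformulation is: $x\in\Omega^+(\Gamma)$ iff for every future null $L$ there is some $p\in\Gamma$ with $\langle x-p,L\rangle>0$. Equivalently, $x$ is not in the closed half-space $\{\langle\cdot-p,L\rangle\le0\ \forall p\}$. This says $x$ lies strictly on the $L$-side of the null support hyperplane $\{\langle\cdot,L\rangle=M^+(L)\}$.

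So I need $x$ strictly between all upper and lower null supports. I take $x=y+te$ and show:
\begin{enumerate}
\item For each future null $L$, the minimum $M^+(L)$ is attained at some $p_L\in\Gamma$. The hyperplane $\langle\cdot,L\rangle=M^+(L)$ supports $\Gamma$.
\item Its intersection with $\ell$ is a single point, at height $h^+(L)$, and $x_t\in$ the correct side iff $t<h^+(L)$ (or $>$, depending on sign).
\item Similarly for past null $L$, with height $h^-(L)$.
\end{enumerate}
It then suffices to prove
\[
\sup_L h^\mp<\inf_L h^\pm
\]
on the appropriate side.

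Here is the key comparison. The projection $P(p_L)$ lies on $\partial K$, while $y$ is interior. Moreover, any null hyperplane meets $\ell$ and $\Gamma$ in a way governed by the projected picture. For a future null $L$ and a past null $L'$, I compare the heights over $y$ of the two null supports. The relevant case is $L'=-L$-type pairs, which are parallel null hyperplanes $\langle\cdot,L\rangle=M^+(L)$ and $\langle\cdot,L\rangle=-M^-(-L)=\max_p\langle p,L\rangle$. These heights differ unless all of $\Gamma$ lies in one null hyperplane, which is impossible by Proposition~\ref{prop-acausal}.

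For general pairs, heights over $y$ along $\ell$ depend on $\langle y,L\rangle$ linearly. I would reduce to this: the lowest upper support and the highest lower support over $y$ are realized by some $L,L'$, and a crossing would produce a causal chord. Concretely, if $h^+(L)\le h^-(L')$ at $y$, I would project along a causal direction in the plane spanned by $L,L'$. The image of the wedge between the two supports then contains $P(y)$ outside the convex image, or places $P(y)$ on a supporting line. Since $y$ projects into the interior of the convex set, this contradicts Definition~\ref{def-convex-like}, in the spirit of the Claim in Theorem~\ref{existence of roof}.

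Finally, continuity and $1$-homogeneity make $h^\pm$ continuous on the compact set of Euclidean-unit null vectors. So the strict inequality gives a gap, and any $t$ in the gap yields $x_t\in\Omega^+(\Gamma)\cap\Omega^-(\Gamma)$.

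The main obstacle is this crossing step for non-opposite $L,L'$. I expect it to require choosing the projection direction carefully and a short convexity argument.
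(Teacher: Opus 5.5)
\textbf{There is a genuine gap.} Your reduction is sound once the retracted identity $\Omega^+(\Gamma)=\bigcap_{p}\mathrm{Timecone}^-(p)$ is deleted. Normalizing $\langle e,L\rangle=-1$ and $\langle e,L'\rangle=1$, you need $h^+(L)>h^-(L')$ for every future null $L$ and every past null $L'$; compactness then yields an admissible $t$. But this pairwise inequality is the entire content of the proposition, and you do not prove it. You only describe what you \emph{would} try and flag it as the main obstacle.

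The sketched mechanism also does not work as written. The only information about $y$ is that it is interior for the vertical projection $P$. After projecting along a causal direction in $\mathrm{span}(L,L')$, there is no reason for the image of the point $y\in\Sigma$ to be interior to the image of $\mathrm{Conv}(\Gamma)$: what you control is the line $y+\mathbb{R}e$, not the point $y$. Nothing explains how a crossing would produce a causal chord either. (Minor: that $\Gamma$ lies in no null hyperplane is not literally Proposition~\ref{prop-acausal}. You need the extra remark that projection along that null direction would put the image of $\Gamma$ in a hyperplane of the target, contrary to the definition.)

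The missing step can be supplied without leaving $P$. The closed set $W=\{z:\langle z,L\rangle\ge M^+(L),\ \langle z,L'\rangle\ge M^-(L')\}$ contains $\mathrm{Conv}(\Gamma)$. It meets $y+\mathbb{R}e$ exactly in the points $y+te$ with $h^-(L')\le t\le h^+(L)$. Hence $g(y)=h^+(L)-h^-(L')=\langle y,L+L'\rangle-M^+(L)-M^-(L')$ is an affine function on $\Sigma$ that is nonnegative on $P(\mathrm{Conv}(\Gamma))=\overline K$. An affine function that is nonnegative on the open set $K$ and vanishes at a point of $K$ is constant. So if $g(y)=0$ for some $y\in K$, then $\langle\cdot,L+L'\rangle$ vanishes on $\Sigma$. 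Together with $\langle e,L+L'\rangle=0$, this forces $L'=-L$. Then $g\equiv\max_\Gamma\langle\cdot,L\rangle-\min_\Gamma\langle\cdot,L\rangle>0$, a contradiction.

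The paper bypasses heights altogether. It takes $q$ in the relative interior of $\mathrm{Conv}(\Gamma)$ and observes that each null linear functional is nonconstant on $\mathrm{Conv}(\Gamma)$, since $\Gamma$ lies in no null hyperplane. Such a functional therefore strictly exceeds its infimum at $q$, so $q\in\Omega(\Gamma)$. In fact your line $y+\mathbb{R}e$ passes through such a relative-interior point, so once this is noticed the whole height-function apparatus is unnecessary.
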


\begin{proof}
$\Gamma$ cannot lie in a null hyperplane: projection along its null direction would place $P(\Gamma)$ in an affine hyperplane of the target, contrary to the definition of a convex-like submanifold. Choose $q$ in the relative interior of $\mathrm{Conv}(\Gamma)$. Every nonzero null linear functional is nonconstant on this convex hull, so
\[
\langle q,L\rangle>\inf_{x\in\Gamma}\langle x,L\rangle
\quad\text{for every }L\in\mathrm{Nullcone}^+\cup\mathrm{Nullcone}^-.
\]
Hence $q\in\Omega(\Gamma)$.
\end{proof}

\begin{definition}
Given a support function $M^\pm$ (of a tower $\Omega^\pm$), we define its corresponding \emph{cap}:
\[
Cap^\pm(M) = \partial \Omega^\pm(M^\pm).
\]

Similarly we can define the corresponding cap to a bounded subset $\Gamma\subset \mathbb{R}_1^{n+1}$:
\[
Cap^\pm(\Gamma) = \partial \Omega^\pm(\Gamma).
\]

$M^\pm$ is called the \emph{support function} of $Cap^\pm(M^\pm)$.

If the support function is defined on $\mathrm{Nullcone}^+$($\mathrm{Nullcone}^-$), then its corresponding cap is said to be face-up(face-down). In other words, the boundary of a face-up(face-down) tower is said to be a face-up(face-down) cap.
\end{definition}

\begin{remark} For convenience, if having determined whether a tower or a cap is face-up or face-down or having determined whether a support function is defined on $\mathrm{Nullcone}^+$ or $\mathrm{Nullcone}^-$, we can omit all the plus-minus signs.
\end{remark}

\begin{proposition}
\label{timelike projection of caps}
Let $C$ be a cap in $\mathbb{R}_1^{n+1}$, and let
$P_{v,\Sigma}$ be a timelike projection onto a spacelike
hyperplane $\Sigma$. Then
\[
P_{v,\Sigma}|_C:C\longrightarrow\Sigma
\]
is a homeomorphism.
\end{proposition}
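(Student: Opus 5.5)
Without loss of generality I take a face-up cap $C=\partial\Omega$ with $\Omega=\Omega^+(M)$; the face-down case follows by reversing the time orientation. The idea is to show that every timelike line $\ell_y=\{y+tv : t\in\mathbb{R}\}$ meets $C$ in exactly one point. Then $P_{v,\Sigma}|_C$ is a continuous bijection onto $\Sigma$, and I will check directly that its inverse is continuous. I normalize $v$ to be future-pointing.

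\textbf{Step 1: the set $I_y=\{t : y+tv\in\Omega\}$ is a nonempty open ray $(-\infty,\tau(y))$ with $\tau(y)$ finite.} Since $\Omega$ is an intersection of open half-spaces, it is convex, so $I_y$ is an interval.

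For each future null $L$ we have $\langle v,L\rangle<0$. So along $\ell_y$ the function $t\mapsto\langle y+tv,L\rangle$ is strictly decreasing. Hence if $t\in I_y$ then every $s<t$ also lies in $I_y$, and $I_y$ is a down-ray.

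To see $I_y$ is nonempty, restrict to the compact set of null $L$ with Euclidean-unit spatial part (or $|L|_E=1$). There $-\langle v,L\rangle\ge c>0$ and $M$ is bounded by continuity and homogeneity. Taking $t\to-\infty$ then gives $\langle y+tv,L\rangle>M(L)$ uniformly in $L$, and homogeneity extends this to all $L$. Openness of $I_y$ follows by the same compactness argument.

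Finiteness of $\tau(y)$ comes from Proposition~\ref{tower and timecone}(a): $\Omega\subset\mathrm{Timecone}^-(x)$, and the timelike line $\ell_y$ leaves $\mathrm{Timecone}^-(x)$ for large $t$.

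\textbf{Step 2: $\ell_y\cap C=\{y+\tau(y)v\}$.} The point $y+\tau(y)v$ lies in $\overline\Omega\setminus\Omega$, so it is in $C$. Any other boundary point of $\ell_y$ would have to be in $\overline\Omega$ with parameter $t>\tau(y)$.

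To exclude this, note that $\overline\Omega\subset\{x : \langle x,L\rangle\ge M(L)\ \forall L\}$. If $t>\tau(y)$, then $y+\tau(y)v\in\overline\Omega$ and strict monotonicity give $\langle y+tv,L\rangle<\langle y+\tau(y)v,L\rangle$. This does not immediately contradict the closed inequality. Instead I argue that $\overline\Omega$ also has the down-ray property: a point $z\in\overline\Omega$ is a limit of points of $\Omega$, so by Proposition~\ref{tower and timecone}(b) the open time-cone below $z$ lies in $\Omega$. If $y+tv$ with $t>\tau(y)$ were in $\overline\Omega$, then $y+\tau(y)v\in\mathrm{Timecone}^-(y+tv)\subset\Omega$, contradicting the definition of $\tau(y)$. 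Hence $P_{v,\Sigma}|_C$ is a bijection, and it is continuous because it is linear.

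\textbf{Step 3: continuity of $\tau$, which gives continuity of the inverse $y\mapsto y+\tau(y)v$ for $y\in\Sigma$.} This is the step I expect to need the most care.

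Lower semicontinuity follows from openness of $\Omega$: if $y+tv\in\Omega$, the same holds for nearby $y$.

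For upper semicontinuity, I use the time-cone property. Suppose $y_k\to y$ with $\tau(y_k)\ge\tau(y)+\varepsilon$. Then $y_k+(\tau(y)+\varepsilon/2)v$ lies in $\Omega$. These points converge to $y+(\tau(y)+\varepsilon/2)v$, which is therefore in $\overline\Omega$, and this contradicts Step 2.

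Together these show $\tau$ is continuous, so the inverse map is continuous and $P_{v,\Sigma}|_C$ is a homeomorphism onto $\Sigma$.
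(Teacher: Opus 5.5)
Your proof is correct. It follows the same overall plan as the paper: show that each timelike line $y+\mathbb{R}v$ meets $C$ exactly once, and that the resulting height function is continuous. It gets there by a softer route.

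The paper normalizes the null vectors to the compact section $S_v=\{L\in\mathrm{Nullcone}^+ : \langle v,L\rangle=-1\}$, on which $\langle y+tv,L\rangle=\langle y,L\rangle-t$. Hence $y+tv\in\Omega^+(M^+)$ holds exactly when $t<f(y)$, where $f(y)=\min_{L\in S_v}(\langle y,L\rangle-M^+(L))$. This $f$ is visibly Lipschitz with constant $\max_{S_v}|L|_E$.

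That one formula covers all three of your steps at once. It gives the down-ray structure and the nonemptiness and finiteness of $I_y$. It gives the uniqueness of the boundary point on each line, since the boundary of the strict subgraph of a continuous function is its graph. It also gives continuity of the inverse.

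Your $\tau$ is exactly this $f$, but you establish its properties qualitatively:
\begin{itemize}
\item monotonicity of $\langle\cdot,L\rangle$ along $v$ gives the down-ray structure;
\item Proposition~\ref{tower and timecone}(a) gives finiteness;
\item the cone property of $\overline{\Omega}$, derived from Proposition~\ref{tower and timecone}(b), gives uniqueness;
\item a two-sided semicontinuity argument gives continuity.
\end{itemize}
Each of these checks out. The one point you pass over quickly is that $\mathrm{Timecone}^-(z)\subset\Omega$ for $z\in\overline{\Omega}$. This holds because past-timelikeness of $w-z$ is an open condition. It also follows directly from $\langle w,L\rangle=\langle z,L\rangle-\langle z-w,L\rangle>\langle z,L\rangle\ge M(L)$.

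The trade-off is this. The paper's argument is shorter and yields a Lipschitz height function, hence a Lipschitz inverse of $P_{v,\Sigma}|_C$. Yours yields only continuity, but it uses nothing beyond openness of the tower, monotonicity along future timelike directions, and the time-cone inclusions. It would therefore apply verbatim to any open set with those properties.
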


\begin{proof}
It suffices to consider a face-up cap
$C=Cap^+(M^+)$; the face-down case follows by reversing
the time-orientation.
Without loss of generality, assume that $v$ is
future-pointing.

The normalized null section
\[
S_v=\{L\in\mathrm{Nullcone}^+:
          \langle v,L\rangle=-1\}
\]
is compact.
For $y\in\Sigma$, define
\[
f(y)=\min_{L\in S_v}
       \bigl(\langle y,L\rangle-M^+(L)\bigr).
\]
Since $M^+$ is continuous, this minimum exists and is finite.
Moreover, for any $y,z\in\Sigma$,
\[
\begin{aligned}
|f(y)-f(z)|
&\leq \max_{L\in S_v}|\langle y-z,L\rangle|\\
&\leq
\left(\max_{L\in S_v}|L|_E\right)|y-z|_E.
\end{aligned}
\]
Thus $f$ is Lipschitz continuous.

Every point of $\mathbb{R}_1^{n+1}$ has a unique
representation $y+tv$, where $y\in\Sigma$ and
$t\in\mathbb{R}$.
By the positive homogeneity of $M^+$ and the identity
$\langle v,L\rangle=-1$ on $S_v$, we have
\[
y+tv\in\Omega^+(M^+)
\quad\Longleftrightarrow\quad
t<f(y).
\]
Consequently, continuity of $f$ gives
\[
C=\partial\Omega^+(M^+)
 =\{y+f(y)v:y\in\Sigma\}.
\]
Hence $P_{v,\Sigma}|_C$ is bijective, with inverse
\[
y\longmapsto y+f(y)v.
\]
Both maps are continuous, which finishes the proof.
\end{proof}

\begin{example}
(a) Let $v$ be a future-pointing unit timelike vector and $\Gamma$ be the unit sphere in the tangent space $v^\perp$. The face-up tower of $\Gamma$ is exactly $\mathrm{Timecone}^-(v)$ and the face-up cap is $\mathrm{Nullcone}^-(v)\cup \{v\}$.

(b) Let $\Gamma$ be the square

\[\{(x,y,z)\in \mathbb{R}_1^3|max\{|x|,|y|\} = 1 \ , z=0\}.\]

Its upper support function is

\[M^+((\cos\theta,\sin\theta,1)) = -|\cos\theta|-|\sin\theta|.\]

Its corresponding cap consists of four flat null faces and four quarter-Nullcones, see Figure~\ref{square's cap}.

\begin{figure}[htbp]
\centering
\includegraphics[width=0.62\linewidth]{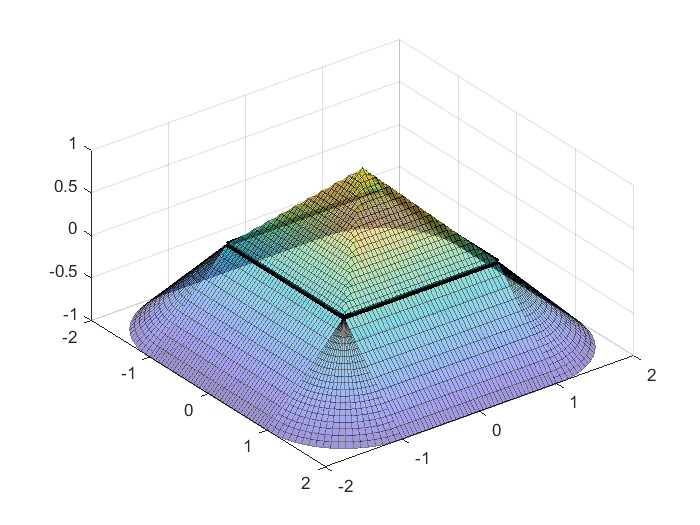}
\caption{The face-up cap of a square}
\label{square's cap}
\end{figure}

(c)Consider the ellipse in $\mathbb{R}_1^3$, given by

\[x^2/4+y^2 = 1;z=0,\]

Its face-up cap is drawn in Figure~\ref{ellipse's cap}. Away from the singular set
\[
\left\{
\left(x,0,\sqrt{1-\frac{x^2}{3}}\right)
\;\middle|\;
-\frac{3}{2}\leq x\leq\frac{3}{2}
\right\},
\]
the cap is a smooth null hypersurface.

\begin{figure}[htbp]
\centering
\includegraphics[width=0.62\linewidth]{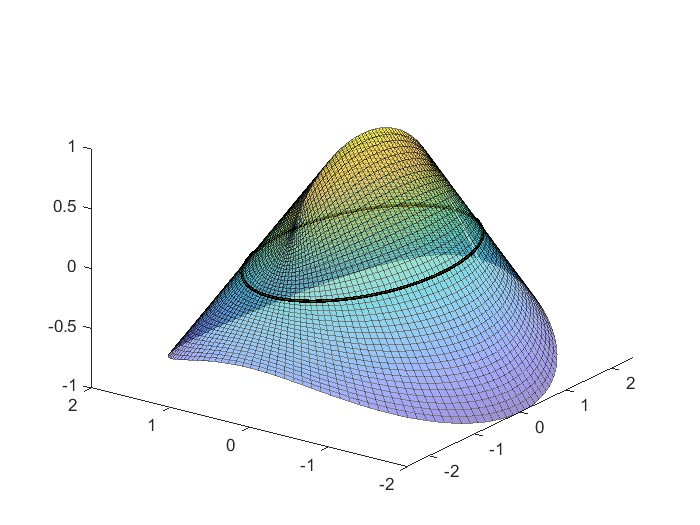}
\caption{The face-up cap of an ellipse}
\label{ellipse's cap}
\end{figure}

\end{example}

\begin{proposition}\label{caps' support representation}
Every face-up tower $\Omega^+(M)$ is an open convex set and its boundary $Cap^+(M)$ is exactly the set:
\[
\{x \in \mathbb{R}_1^{n+1}|\langle x,L \rangle \geq M(L),\forall L \in \mathrm{Nullcone}^+ \text{ and equality holds for some } L\}.
\]
\end{proposition}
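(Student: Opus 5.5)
The plan is to reduce everything to the graph description from the proof of Proposition~\ref{timelike projection of caps}. Fix a future-pointing timelike vector $v$, a spacelike hyperplane $\Sigma$ transversal to $v$, and the compact normalized section $S_v=\{L\in\mathrm{Nullcone}^+:\langle v,L\rangle=-1\}$. By positive $1$-homogeneity of $M$, the condition $\langle x,L\rangle>M(L)$ for all $L\in\mathrm{Nullcone}^+$ is equivalent to the same condition for all $L\in S_v$. Writing $x=y+tv$ with $y\in\Sigma$, we set $g_L(x)=\langle x,L\rangle-M(L)$ and $F(x)=\min_{L\in S_v}g_L(x)$. Then $F(y+tv)=f(y)-t$, where $f$ is the Lipschitz function from that proof.

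\emph{Convexity and openness.} First observe that $\Omega^+(M)=\{x: F(x)>0\}$, since the minimum over the compact set $S_v$ is attained. Each $g_L$ is affine, so $\Omega^+(M)$ is an intersection of open half-spaces and is therefore convex. Openness follows because $F$ is continuous: it is a minimum of a family of affine functions whose gradients $L\in S_v$ are uniformly bounded, or more directly, $F(y+tv)=f(y)-t$ with $f$ continuous.

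\emph{Boundary.} Proposition~\ref{timelike projection of caps} already gives $Cap^+(M)=\{y+f(y)v\}=\{F=0\}$. It remains to identify $\{F=0\}$ with the displayed set $A$.
\begin{itemize}
\item If $F(x)=0$, then $g_L(x)\ge 0$ for every $L\in S_v$, and by homogeneity for every $L\in\mathrm{Nullcone}^+$. The minimum is attained at some $L_0\in S_v$, where equality holds. Hence $x\in A$.
\item Conversely, let $x\in A$. Then $g_L(x)\ge0$ for all $L$, and equality holds at some $L\neq0$. Rescaling this $L$ into $S_v$, which homogeneity permits, keeps equality. Therefore $F(x)=0$.
\end{itemize}

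The only delicate point is the rescaling step. It uses that every future null $L$ satisfies $\langle v,L\rangle<0$, so that $L/(-\langle v,L\rangle)\in S_v$. It also uses that the equality "for some $L$" refers to a nonzero $L$. Here $\mathrm{Nullcone}^+$ excludes $0$ by definition, because its elements are future-pointing with $x_{n+1}>0$. Everything else is routine.
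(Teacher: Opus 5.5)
Your proof is correct, but it is organized differently from the paper's. You write the tower as the positivity set of the single function $F=\min_{L\in S_v}g_L$ and note that $F(y+tv)=f(y)-t$. You then import the boundary computation $\partial\Omega^+(M)=\{y+f(y)v\}=\{F=0\}$ from Proposition~\ref{timelike projection of caps}, so the statement reduces to a homogeneity check on the zero set of $F$. This is legitimate: that proposition precedes the present one, and its proof uses only compactness of $S_v$ and homogeneity, so there is no circularity.

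The paper instead argues from scratch with a pointwise trichotomy.
\begin{enumerate}
\item For $x$ in the displayed set $C$ and any future-pointing timelike $v$, one has $x-v\in\Omega^+(M)$ and $x+v\notin\Omega^+(M)$, so $x\in\partial\Omega^+(M)$.
\item For $x\in\Omega^+(M)$, compactness of $S$ gives a uniform margin $\epsilon$, so $x$ is an interior point.
\item For $x\notin C\cup\Omega^+(M)$, a strict violation $\langle x,L\rangle<M(L)$ for a single $L$ persists on a neighborhood, so $x\notin\overline{\Omega^+(M)}$.
\end{enumerate}

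Your route is shorter once the graph description is available, and it exhibits $F$ as a Lipschitz defining function for the cap, decreasing at unit rate along $v$. The paper's route is self-contained and coordinate-free: it needs no choice of $\Sigma$ and no decomposition $x=y+tv$. Its $x\pm v$ step is exactly the pointwise form of the monotonicity of $F$ in $t$ that your argument uses implicitly through the earlier proposition.
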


\begin{proof}
For convenience let
\[
C = \{x \in \mathbb{R}_1^{n+1}|\langle x,L \rangle \geq M(L),\forall L \in \mathrm{Nullcone}^+ \text{ and equality holds for some } L\}.
\]

The convexity of $\Omega^+(M)$ is natural and we only need to prove $C = \partial \Omega^+(M)$.

If $x\in C$, choose an arbitrary future-pointing timelike vector $v$ then $x+v \notin \Omega^+(M)$ and $x-v \in \Omega^+(M)$. So $x \in \partial \Omega^+(M)$.

If $x \in \Omega^+(M)$, choose an arbitrary future-pointing timelike vector $v$ and consider the set
\[
S = \{L^+ \in \mathrm{Nullcone}^+ | \langle L^+,v \rangle  = -1 \}.
\]
By continuity there exists a positive $\epsilon$ so that
\[
\langle x,L^+ \rangle  > M(L^+) + \epsilon, \forall L^+ \in S.
\]
$M$ is homogeneous so
\[
\Omega^+(M) = \{x\in \mathbb{R}_1^{n+1} | \langle x,L \rangle >M(L),\forall L \in S\}.
\]
$S$ is bounded so $x \in \mathrm{int}(\Omega^+(M))$.

If $x \notin C\cup \Omega^+(M)$, then for some $L^+$ and positive $\epsilon$ $\langle x,L^+ \rangle  < M(L^+) - \epsilon$. So $x \notin \overline{\Omega^+(M)}$.
\end{proof}

\begin{proposition}\label{outward Nulls}
Suppose $x \in Cap^+(M)$, $L \in \mathrm{Nullcone}^+$ and $\langle x,L \rangle  = M(L)$. Then for $t \geq 0$, $x - tL \in Cap^+(M)$ .

\end{proposition}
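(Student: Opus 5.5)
The plan is to verify directly that $x-tL$ satisfies the characterization of $Cap^+(M)$ in Proposition~\ref{caps' support representation}. That characterization requires two things: $\langle x-tL,L'\rangle\ge M(L')$ for every $L'\in\mathrm{Nullcone}^+$, and equality for at least one $L'$. The natural candidate for the equality case is $L$ itself.

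\textbf{Equality.} Since $L$ is null, $\langle L,L\rangle=0$, and therefore
\[
\langle x-tL,L\rangle=\langle x,L\rangle=M(L).
\]

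\textbf{Inequality.} For an arbitrary $L'\in\mathrm{Nullcone}^+$ we have
\[
\langle x-tL,L'\rangle=\langle x,L'\rangle-t\langle L,L'\rangle.
\]
Here $L$ and $L'$ are both future-pointing null vectors. By the reverse Cauchy--Schwarz fact for causal vectors in the same time cone, $\langle L,L'\rangle\le 0$, with equality exactly when they are parallel. Concretely, writing $L=(\ell,|\ell|)$ and $L'=(\ell',|\ell'|)$ gives
\[
\langle L,L'\rangle=\langle \ell,\ell'\rangle_E-|\ell||\ell'|\le 0.
\]
Consequently $-t\langle L,L'\rangle\ge 0$ for $t\ge 0$. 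Because $x\in Cap^+(M)$, the same characterization gives $\langle x,L'\rangle\ge M(L')$, and hence
\[
\langle x-tL,L'\rangle\ge M(L').
\]

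These two facts together put $x-tL$ in the set described in Proposition~\ref{caps' support representation}, that is, in $Cap^+(M)$. There is no real obstacle here. The only point needing care is the sign of $\langle L,L'\rangle$, which depends on both null vectors being future-pointing and on the sign convention of the metric.
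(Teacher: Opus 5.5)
Your proof is correct and is essentially the paper's argument: nullity of $L$ gives $\langle x-tL,L\rangle=M(L)$, the inequality $\langle x-tL,L'\rangle\ge\langle x,L'\rangle\ge M(L')$ follows from $\langle L,L'\rangle\le 0$, and you conclude by the characterization in Proposition~\ref{caps' support representation}. The only addition is that you justify the sign of $\langle L,L'\rangle$ explicitly, which the paper leaves implicit.
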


\begin{proof}
$L$ is null so
\[\langle x-tL,L \rangle  = M(L),\]

and for $L' \in \mathrm{Nullcone}^+$,

\[\langle x-tL,L' \rangle  \geq \langle x,L' \rangle  \geq M(L').\]
\end{proof}

\begin{theorem}\label{intersection of caps}
Given a face-up tower $\Omega^+ = \Omega^+(M^+)$ and a face-down tower $\Omega^- = \Omega^-(M^-)$ , if $\Omega=\Omega^+ \cap \Omega^-$ is non-empty, then $\Gamma = Cap^+(M^+) \cap Cap^-(M^-)$ is a convex-like submanifold.
\end{theorem}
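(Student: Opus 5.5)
Fix an arbitrary causal projection $P=P_{v,\Sigma}$ with $v$ future-pointing, timelike or null, and $\Sigma$ spacelike. Write every point as $y+tv$ with $y\in\Sigma$. We must show that $P|_\Gamma$ is a homeomorphism onto the boundary of a bounded open convex set in $\Sigma$. As in the proof of Proposition~\ref{timelike projection of caps}, both caps should be graphs over (part of) $\Sigma$ in the direction $v$. Define
\[
f^+(y)=\sup\{t : y+tv\in\Omega^+\},\qquad f^-(y)=\inf\{t: y+tv\in\Omega^-\}.
\]
By Proposition~\ref{tower and timecone}(b), the set $\{t: y+tv\in\Omega^+\}$ is a downward ray, since $-v$ is past causal and $\Omega^+$ is open. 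Likewise $\{t:y+tv\in\Omega^-\}$ is an upward ray. The function $f^+$ is concave and $f^-$ is convex, because the towers are convex by Proposition~\ref{caps' support representation}. For timelike $v$ both are finite and continuous. For null $v$ they may take the values $+\infty$ or $-\infty$ respectively, and I would handle this by working on the domain where they are finite.

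The key set is
\[
K=\{y\in\Sigma : f^-(y)<f^+(y)\}=P(\Omega).
\]
It is convex and open, being the projection of the open convex set $\Omega$, and nonempty by hypothesis. The candidate image is then $P(\Gamma)=\partial K$, with $\Gamma$ equal to the graph of $f^+=f^-$ over $\partial K$. Three things must be checked.

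First, $K$ is bounded. By Proposition~\ref{tower and timecone}(a), $\Omega\subset\mathrm{Timecone}^-(a)\cap\mathrm{Timecone}^+(b)$ for some $a,b$, which is a bounded causal diamond. Its projection along any causal $v$ is therefore bounded. For null $v$ one has to be careful that the projection of an open diamond is still bounded; it is, because the closed diamond is compact.

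Second, every point of $\Gamma$ projects to $\partial K$, and this holds injectively. If $x\in\Gamma$, then $x\notin\Omega$, so $x$ is not in $K$'s fibre interval. Suppose $x$ were not an endpoint shared by both graphs. Then $f^-(y)<f^+(y)$ at $y=P(x)$, and $x$ would lie strictly between the graphs, hence inside $\Omega$. So $f^-(y)=f^+(y)$, i.e.\ $y\notin K$. On the other hand, $x$ is a limit of points of $\Omega$, since $\overline\Omega=\overline{\Omega^+}\cap\overline{\Omega^-}$ for convex sets with intersecting interiors. Hence $y\in\overline K$, and so $y\in\partial K$. Injectivity follows because on each fibre the caps meet in at most the single point $f^+(y)=f^-(y)$.

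Third, surjectivity onto $\partial K$ and continuity of the inverse. For $y\in\partial K$, choose $y_k\in K$ with $y_k\to y$. By boundedness the points $y_k+f^\pm(y_k)v$ stay in a compact region, namely the closed diamond. A limit point lies in $\overline{\Omega^+}\setminus\Omega^+$ and in $\overline{\Omega^-}\setminus\Omega^-$ whenever $y\notin K$, and hence lies in $\Gamma$ over $y$. Continuity of the inverse $y\mapsto y+f^+(y)v$ on $\partial K$ then follows from compactness of $\Gamma$ together with injectivity: a continuous bijection from a compact space onto a Hausdorff space is a homeomorphism.

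The main obstacle is the null projection case. There $f^\pm$ need not be globally finite or continuous, and the graph description of Proposition~\ref{timelike projection of caps} fails. I would bypass graphs entirely and argue with compactness of $\overline\Omega$ and the ray structure of the fibres, as in the second and third steps. The one fact still needed is that $\Gamma\subset\overline\Omega$. This follows because the open convex sets $\Omega^\pm$ intersect, so $\overline{\Omega^+\cap\Omega^-}=\overline{\Omega^+}\cap\overline{\Omega^-}\supset\Gamma$. Finally, $\Gamma$ is a topological $(n-1)$-manifold because it is homeomorphic to $\partial K$.
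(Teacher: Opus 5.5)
\textbf{Main gap: injectivity for null $v$.} Your sentence ``on each fibre the caps meet in at most the single point $f^+(y)=f^-(y)$'' is exactly the graph description you concede fails for null $v$, and the ``ray structure'' does not rescue it. By Proposition~\ref{outward Nulls}, a null fibre $y+\mathbb{R}v$ can be a generator of $Cap^+$. It then meets $Cap^+$ in a whole ray $\{t\le t_1\}$ and misses $\Omega^+$, so $f^+(y)=-\infty$. Symmetrically, it can meet $Cap^-$ in a ray $\{t\ge t_2\}$ with $f^-(y)=+\infty$. If $t_2<t_1$, the fibre would contain a nondegenerate null segment of $\Gamma$, and nothing you prove excludes this. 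Your second step only yields $f^+(y)\le t_0\le f^-(y)$, not equality. You also use $\Omega\neq\emptyset$ only through $K\neq\emptyset$ and $\overline\Omega=\overline{\Omega^+}\cap\overline{\Omega^-}$, and neither rules out overlapping rays.

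The missing idea is the null-hyperplane separation argument. Suppose $x_1\neq x_2\in\Gamma$ with $x_1-x_2$ future causal. Pick a future null $L$ with $\langle x_2,L\rangle=M^+(L)$. Since $x_1\in\overline{\Omega^+}$, we get $\langle x_1-x_2,L\rangle\ge 0$, which forces $x_1-x_2\parallel L$. The same reasoning at $x_1$, with a past null support normal of $\Omega^-$, gives $\Omega^-\subset\{\langle z,L\rangle<\langle x_1,L\rangle\}$. On the other hand, $\Omega^+\subset\{\langle z,L\rangle>\langle x_2,L\rangle\}$, and $\langle x_1,L\rangle=\langle x_2,L\rangle$. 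So $\Omega^+\cap\Omega^-=\emptyset$, a contradiction. This is where nonemptiness of $\Omega$ does real work.

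\textbf{Secondary: surjectivity and the graph claim for null $v$.} Both need repair, as the following example shows. Take the round sphere $\{(w,0):|w|=1\}$ with towers $\mathrm{Timecone}^-(e)$ and $\mathrm{Timecone}^+(-e)$, where $e=(0,\dots,0,1)$. Take $x=(u,0)$ on it and $v=(-u,1)$.
\begin{itemize}
\item The fibre through $x$ is a generator of $Cap^+$, so $f^+(P(x))=-\infty$ while $f^-(P(x))=0$. Hence $\Gamma$ is not the graph of $f^+=f^-$ over $\partial K$.
\item Let $y_k\in K$ approach $P(x)$ inside the plane spanned by $u$ and $e$. The upper endpoints $y_k+f^+(y_k)v$ then converge to $e$, which lies in $Cap^+\cap\Omega^-$ and not in $\Gamma$. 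So ``a limit point lies in both boundaries'' is false as stated.
\end{itemize}
A correct replacement is as follows. Let $a$ and $b$ be limits of the upper and lower endpoints. The fibre segment $[a,b]$ lies in $\overline\Omega$. It misses $\Omega$, because it projects to $y\notin K=P(\Omega)$. Hence $[a,b]$ is covered by the closed sets $Cap^+\ni a$ and $Cap^-\ni b$, and by connectedness it contains a point of $\Gamma$. With these repairs, the first being the substantive one, your fibre-function approach goes through. For timelike $v$ it is correct as written.
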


\begin{proof}
By Proposition~\ref{tower and timecone}, $\Omega$ is bounded. Choose an arbitrary timelike or null vector $v$ and consider a projection $P = P_v$. It suffices to prove that $P|_\Gamma$ is a homeomorphism from $\Gamma$ to $\partial P(\overline{\Omega})$.

Step 1: $P(\Gamma)\subseteq\partial P(\overline{\Omega})$.

If $x\in\Gamma$ and $P(x)\in\mathrm{int}(P(\overline{\Omega}))$, then there exists $y\in\Omega$ with $P(y)=P(x)$. Thus $y-x$ is a nonzero causal vector. If it is future-pointing, choose $L^+\in\mathrm{Nullcone}^+$ with $\langle x,L^+\rangle=M^+(L^+)$. Then
\[
\langle y-x,L^+\rangle\leq0,
\]
whereas $y\in\Omega^+$ gives $\langle y-x,L^+\rangle>0$, a contradiction. If $y-x$ is past-pointing, the same argument uses a past-pointing lower support normal.

Step 2: Prove injectivity.

Suppose $x_1,x_2 \in \Gamma$, $x_1\neq x_2$, and $P(x_1) = P(x_2)$. Without loss of generality assume $x_1 - x_2$ is a future-pointing timelike or null vector.

From Proposition~\ref{caps' support representation}, for some $L_2 \in \mathrm{Nullcone}^+$, $\langle x_2,L_2 \rangle  = M^+(L_2)$ and thus $\langle x_1 - x_2,L_2 \rangle  \geq 0$, so $x_1 - x_2 \parallel L_2$.

Also we deduce that for some $L_1 \in \mathrm{Nullcone}^-$ $\langle x_1,L_1 \rangle  = M^-(L_1)$ and  $x_1 - x_2 \parallel L_1$.

Then the common null support hyperplane through $x_1$ and $x_2$ separates the open towers $\Omega^+$ and $\Omega^-$, contradicting $\Omega^+\cap\Omega^-\neq\emptyset$!

Step 3: Prove surjectivity.

Consider any point $y \in \partial P(\overline{\Omega})$, then there exists some $z \in \partial{\Omega}$ such that $P(z) = y$. It suffices to find $x \in \Gamma$ such that $P(x) = y$.

The case that $z\in\Gamma$ is trivial. Otherwise, without loss of generality suppose $z \in Cap^+ \cap \Omega^-$ and we may assume the projection vector $v$ is past-pointing. Then for $t \geq 0$ , $z+tv \in \overline{\Omega^+}$.

Next we show that $\forall t,z+tv \in Cap^+(M^+)$. If for some $t \geq 0$, $z+tv \in \Omega^+$, then convexity shows $z+\epsilon tv \in \Omega^+, \forall \epsilon \in (0,1)$. Then for some small $\epsilon$, $z+\epsilon tv \in \Omega^+ \cap \Omega^-$, which contradicts to the fact that $y = P(z+\epsilon tv)\in \partial P(\overline{\Omega})$.

Finally, by Proposition~\ref{tower and timecone}, $\Omega^-$ is contained in $\mathrm{Timecone}^+(a)$ for some point $a$, so the past-pointing ray $z+tv$ eventually leaves $\Omega^-$. By continuity there exists $t_0 > 0$ such that $z+t_0v \in Cap^+(M^+) \cap Cap^-(M^-)$. Then $z+t_0v \in \Gamma$ and $P(z+t_0v) = y$.

Step 4: Notice that $\Gamma$ and $\partial P(\overline{\Omega})$ is compact, so the inverse of $P|_\Gamma$ is continuous. Then $P|_\Gamma$ is a homeomorphism from $\Gamma$ to $\partial P(\overline{\Omega})$.
\end{proof}

\begin{theorem}\label{intersection of corresponding caps}
If $\Gamma$ is a convex-like submanifold then
\[
\Gamma = Cap^+(\Gamma) \cap Cap^-(\Gamma).
\]
\end{theorem}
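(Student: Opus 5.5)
We show the two inclusions and then exploit the fact, from Theorem~\ref{intersection of caps}, that both sides are convex-like submanifolds with a common acausal interior.

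\textbf{Inclusion $\Gamma\subseteq Cap^+(\Gamma)\cap Cap^-(\Gamma)$.} Let $x\in\Gamma$. For every $L\in\mathrm{Nullcone}^+$ we have $\langle x,L\rangle\ge\inf_{p\in\Gamma}\langle p,L\rangle=M^+(L)$. Equality must be attained for some $L$. By Theorem~\ref{existence of roof} there is a null roof $(\pi^+,\pi^-)$ at $x$. Take the future null normal $L$ of $\pi^+$ (up to the sign convention making $\Gamma$ lie on the correct side). Then $\langle q-x,L\rangle\ge 0$ for all $q\in\Gamma$, so $\langle x,L\rangle=M^+(L)$. By Proposition~\ref{caps' support representation}, $x\in Cap^+(\Gamma)$. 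The same argument with $\pi^-$ and a past null normal gives $x\in Cap^-(\Gamma)$.

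\textbf{Reverse inclusion.} Set $\Gamma'=Cap^+(\Gamma)\cap Cap^-(\Gamma)$. By Proposition~\ref{nonempty corresponding towers}, $\Omega(\Gamma)\ne\emptyset$, so Theorem~\ref{intersection of caps} applies. Its proof shows that for any causal projection $P$, the restriction $P|_{\Gamma'}$ is a homeomorphism onto $\partial P(\overline{\Omega(\Gamma)})$. Since $P|_{\Gamma'}$ is injective and $\Gamma\subseteq\Gamma'$, it suffices to show $P(\Gamma)=P(\Gamma')$ for one such $P$. We know $P(\Gamma)=\partial\mathrm{Conv}(P(\Gamma))=\partial P(\mathrm{Conv}(\Gamma))$. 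So the goal reduces to
\[
P(\overline{\Omega(\Gamma)})=P(\mathrm{Conv}(\Gamma)),
\]
and it is enough to prove $\overline{\Omega(\Gamma)}\subseteq\mathrm{Conv}(\Gamma)$ together with $\mathrm{Conv}(\Gamma)\subseteq\overline{\Omega(\Gamma)}$.

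The second containment is immediate: every point of $\mathrm{Conv}(\Gamma)$ satisfies the non-strict inequalities $\langle y,L\rangle\ge M^\pm(L)$, and $\overline{\Omega(\Gamma)}$ is exactly the set cut out by these inequalities, because $\Omega(\Gamma)$ is nonempty, open and convex.

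\textbf{Main obstacle: $\overline{\Omega(\Gamma)}\subseteq\mathrm{Conv}(\Gamma)$.} The danger is that the null half-spaces alone might not cut out $\mathrm{Conv}(\Gamma)$. I would avoid comparing the two sets in the ambient space and work instead in a single timelike projection. Fix a timelike $P=P_{v,\Sigma}$ and suppose $y\in\overline{\Omega(\Gamma)}$ with $P(y)\notin P(\mathrm{Conv}(\Gamma))$. Since $P(\mathrm{Conv}(\Gamma))$ is a convex body in $\Sigma$, a hyperplane $H\subset\Sigma$ strictly separates $P(y)$ from it. Its preimage $P^{-1}(H)$ is a timelike hyperplane containing the direction $v$. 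Rotate $P^{-1}(H)$ about $P^{-1}(H)\cap\{\text{level set}\}$ until it becomes null, keeping $\Gamma$ strictly on one side; this should be possible because $\Gamma$ is compact and $y$ lies strictly on the other side, although making this precise is the delicate point. A face-up or face-down null half-space then excludes $y$, which contradicts $y\in\overline{\Omega(\Gamma)}$.

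A cleaner route, which I would try first, is to replace the projection $P_{v,\Sigma}$ by a null projection $P_L$ directly in the reduction above. A separating hyperplane in the target of $P_L$ pulls back to a hyperplane containing $L$, which is null or timelike. When it is null we are done, since it supplies a null support of $\Gamma$ excluding $y$. When it is timelike, the separating hyperplane can be chosen to be a support hyperplane at a boundary point; the null direction of the roof there, given by Theorem~\ref{existence of roof}, should make the pullback null, and this is the step I expect to need care.

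Once $P(\Gamma)=P(\Gamma')$ is established, injectivity of $P|_{\Gamma'}$ together with $\Gamma\subseteq\Gamma'$ gives $\Gamma=\Gamma'$.
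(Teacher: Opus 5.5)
Your first inclusion is correct, and so is your reduction: given $\Gamma\subseteq\Gamma'$ and injectivity of $P|_{\Gamma'}$, it suffices to prove $P(\overline{\Omega(\Gamma)})\subseteq P(\mathrm{Conv}(\Gamma))$ for a single causal projection. However, this is exactly the step you leave open, and none of the routes you sketch closes it.

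\begin{itemize}
\item \emph{The ambient containment.} $\overline{\Omega(\Gamma)}\subseteq\mathrm{Conv}(\Gamma)$ is false. For the unit circle in $\{x_3=0\}\subset\mathbb{R}_1^3$, $\overline{\Omega(\Gamma)}$ is the solid double cone with apices $(0,0,\pm1)$, while $\mathrm{Conv}(\Gamma)$ is a flat disk.
\item \emph{The rotation.} It does not follow from compactness. Take $L_0\perp v$ with $\langle L_0,L_0\rangle=-\langle v,v\rangle=1$, and set $g(s)=\min_{\mathrm{Conv}(\Gamma)}\langle\cdot,L_0+sv\rangle-\langle y,L_0+sv\rangle$. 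Then $g$ is concave with $g(0)>0$. Since $L_0\pm v$ are null, one future- and one past-pointing, the hypothesis $y\in\overline{\Omega(\Gamma)}$ says precisely that $g(\pm1)\le0$. Convexity and compactness alone do not exclude this: for a long timelike segment and a point beside it, every separating covector is spacelike. So the convex-like hypothesis has to enter, and your sketch does not say how.
\item \emph{The null-projection variant.} The pullback under $P_L$ of any hyperplane contains the direction $L$, and the only null hyperplanes containing $L$ are translates of $L^\perp$. So a supporting hyperplane at $P_L(x)$ pulls back to a null one only when $L$ itself is a roof normal at $x$, which fails at a general boundary point.
\end{itemize}

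The missing observation is that no separation is needed; the global input is already in your reduction. Put $A=P(\mathrm{Conv}(\Gamma))$ and $B=P(\overline{\Omega(\Gamma)})$. These are compact convex sets in $\Sigma$ with $A\subseteq B$, $\operatorname{int}A\neq\emptyset$, and $\partial A=P(\Gamma)\subseteq P(\Gamma')=\partial B$. Suppose $b\in B\setminus A$ and $a\in\operatorname{int}A$. The last point $c$ of $A$ on the segment $[a,b]$ lies in $\partial A$. It also lies in $[a,b)\subseteq\operatorname{int}B$, which contradicts $\partial A\subseteq\partial B$. Hence $A=B$, and your argument is complete.

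The paper reaches the same conclusion without projecting. Both $\Gamma\subseteq\Gamma'$ are compact $(n-1)$-manifolds without boundary, and $\Gamma'$ is connected. By invariance of domain the inclusion is open, and it is closed by compactness, so it is surjective.
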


\begin{proof}
We first show that $\Gamma \subset Cap^+(\Gamma) \cap Cap^-(\Gamma)$: For arbitrary $x \in \Gamma$, from Theorem~\ref{existence of roof} there exists a roof $(\pi^+, \pi^-)$ of $\Gamma$ at $x$. Let $L^+$ be a future-pointing null vector which satisfies $\pi^+ \perp L^+$. Then $\langle x,L^+ \rangle  = \inf_{y \in \Gamma} \langle y,L^+ \rangle $ so $x \in Cap^+(\Gamma)$. By the same argument $x \in Cap^-(\Gamma)$.

By Proposition~\ref{nonempty corresponding towers}, $\Omega(\Gamma)\neq\emptyset$. Thus, by Theorem~\ref{intersection of caps}, the intersection $\widetilde\Gamma = Cap^+(\Gamma) \cap Cap^-(\Gamma)$ is a convex-like submanifold. Thus $\Gamma$ and $\widetilde\Gamma$ are compact manifolds without boundary of the same dimension. By invariance of domain, the inclusion $\Gamma\hookrightarrow\widetilde\Gamma$ has open image. Its image is also closed by compactness. Since $\widetilde\Gamma$ is connected, we conclude that $\Gamma = Cap^+(\Gamma) \cap Cap^-(\Gamma)$.
\end{proof}

Due to convexity and degeneracy, caps are always easier to handle than convex-like submanifolds. Theorem~\ref{intersection of caps} and Theorem~\ref{intersection of corresponding caps} enable us to construct and perturb convex-like submanifolds by working on caps.

An equivalent definition of convex-like submanifolds is provided by the following corollary.

\begin{corollary}\label{caps make convex-like}
Suppose $\Gamma$ is a subset of $\mathbb{R}_1^{n+1}$. Then the following statements are equivalent.
\begin{enumerate}
    \item $\Gamma$ is a convex-like submanifold.
    \item $\exists$ towers $\Omega^+(M^+)$, $\Omega^-(M^-)$,$\Omega^+(M^+)\cap\Omega^-(M^-) \neq \emptyset$ and $\Gamma = Cap(M^+) \cap Cap(M^-)$ (In other words, $\Gamma$ is the intersection of a pair of caps corresponding to a pair of intersecting towers).
\end{enumerate}
\end{corollary}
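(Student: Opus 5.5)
The two implications come almost directly from the preceding theorems, so the plan is to prove each direction by citing one result.

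For (1)$\Rightarrow$(2), let $\Gamma$ be a convex-like submanifold. We take the corresponding towers $\Omega^+(\Gamma)$ and $\Omega^-(\Gamma)$, with support functions
\[
M^\pm(L)=\inf_{p\in\Gamma}\langle p,L\rangle .
\]
These are admissible support functions. Positive $1$-homogeneity is immediate. For continuity we need $\Gamma$ compact; this holds because each causal projection maps $\Gamma$ homeomorphically onto the boundary of a bounded convex set, which is compact. Given compactness, $M^\pm$ is a minimum of a continuous function over a compact set, hence finite and continuous. Proposition~\ref{nonempty corresponding towers} gives $\Omega^+(\Gamma)\cap\Omega^-(\Gamma)\neq\emptyset$. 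Theorem~\ref{intersection of corresponding caps} gives
\[
\Gamma=Cap^+(\Gamma)\cap Cap^-(\Gamma)=Cap(M^+)\cap Cap(M^-),
\]
which is exactly (2).

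For (2)$\Rightarrow$(1), suppose $\Omega^+(M^+)\cap\Omega^-(M^-)\neq\emptyset$ and $\Gamma=Cap(M^+)\cap Cap(M^-)$. Theorem~\ref{intersection of caps} applies verbatim and shows that $\Gamma$ is a convex-like submanifold.

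The only point needing care is the well-definedness check in (1)$\Rightarrow$(2): the support functions of $\Gamma$ must be continuous and real-valued, which is what the compactness of $\Gamma$ supplies. Everything else is a direct invocation of the two theorems, so no real obstacle is expected.
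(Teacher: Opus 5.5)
Your proposal is correct and matches the paper's implicit argument: (1)$\Rightarrow$(2) follows from Proposition~\ref{nonempty corresponding towers} together with Theorem~\ref{intersection of corresponding caps}, and (2)$\Rightarrow$(1) is Theorem~\ref{intersection of caps}. Your extra check that $M^\pm$ are finite and continuous is a sensible addition; boundedness of $\Gamma$ alone already makes $M^\pm$ Lipschitz on Euclidean-unit null vectors, so compactness is more than you need.
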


Generically, there's no uniqueness when choosing two caps to intersect at a given convex-like submanifold. \begin{example}

Consider the square
\[
\{(x,y,z)\in \mathbb{R}_1^3|max\{|x|,|y|\} = 1 \ , z=0\}.
\]

Its corresponding face-up cap is $Cap^+(M)$ where
\[
M((\cos\theta,\sin\theta,1)) = -|\cos\theta|-|\sin\theta|.
\]

Consider another support function:
\[
M_1((\cos\theta,\sin\theta,1)) = 1-2|\cos\theta|-2|\sin\theta|,
\]
extended homogeneously to $\mathrm{Nullcone}^+$;
then $Cap^+(M_1)$ and $Cap^+(M)$ coincide at $z\geq0$, where both have height $z=1-\max\{|x|,|y|\}$. They differ outside this part: at $(x,y)=(2,2)$ their heights are $-1$ and $-\sqrt2$, respectively.

\begin{figure}[htbp]
\centering
\begin{minipage}{0.48\linewidth}
\centering
    \includegraphics[width=\linewidth]{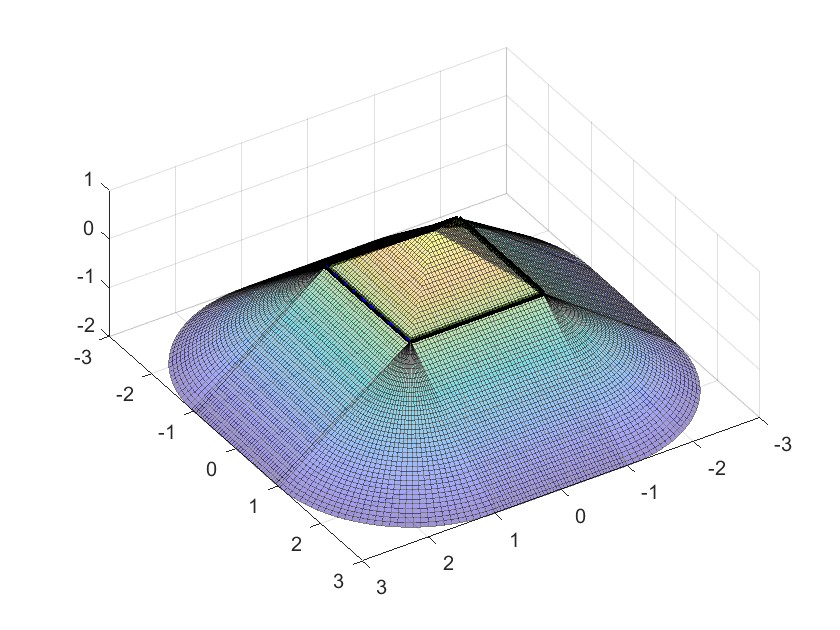}
\end{minipage}\hfill
\begin{minipage}{0.48\linewidth}
\centering
    \includegraphics[width=\linewidth]{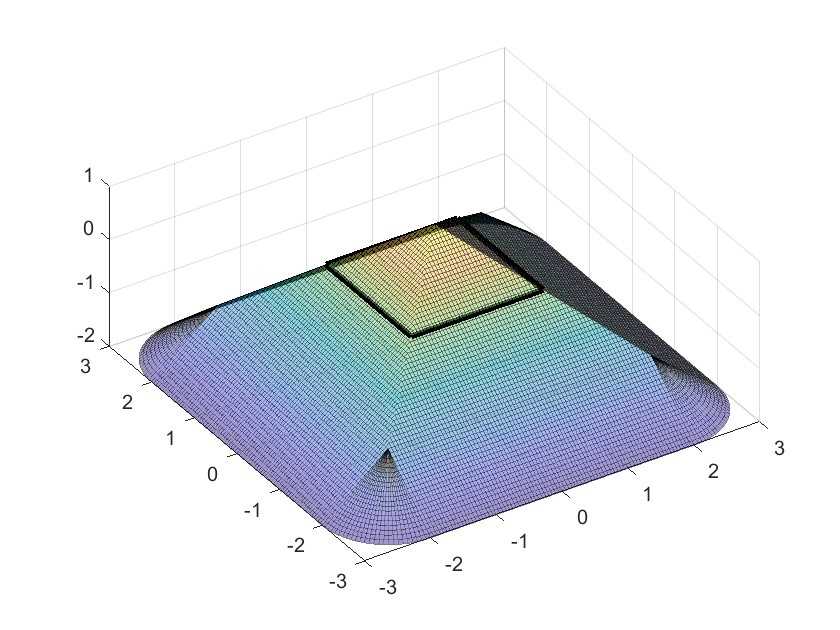}
\end{minipage}
\caption{$Cap^+(M)$(left) and $Cap^+(M_1)$(right)}
\label{square caps comparison}
\end{figure}

\end{example}

In order to obtain a uniqueness result, we may divide a cap into two parts with a given convex-like submanifold as their common boundary.


\begin{definition}
Suppose we are given a cap $\Sigma$ and a convex-like submanifold $\Gamma\subset\Sigma$. By the Jordan-Brouwer Separation Theorem, $\Sigma\setminus\Gamma$ has a bounded and an unbounded connected component. Their closures in $\Sigma$ are called, respectively, the \emph{inner part} and the \emph{outer part} of $\Sigma$ with respect to $\Gamma$. Both parts include their common boundary $\Gamma$.
\end{definition}

\begin{proposition}
\label{projection of inner and outer parts}
Let $C$ be a cap in $\mathbb{R}_1^{n+1}$, $n\geq2$, and
let $\Gamma\subset C$ be a convex-like submanifold.
Denote by $C_{\mathrm{in}}$ and $C_{\mathrm{out}}$ the inner
and outer parts of $C$ with respect to $\Gamma$, respectively.

For any timelike projection $P=P_{v,\Sigma}$ onto a spacelike
hyperplane $\Sigma$, let
\[
K=\mathrm{Conv}(P(\Gamma)).
\]
Then $K$ is a compact convex body in $\Sigma$, and
\[
P(C_{\mathrm{in}})=K,
\qquad
P(C_{\mathrm{out}})
=\Sigma\setminus\operatorname{int}_{\Sigma}K.
\]
Both restrictions are homeomorphisms onto their images.
Moreover,
\[
P(C_{\mathrm{in}}\setminus\Gamma)
=\operatorname{int}_{\Sigma}K,
\qquad
P(C_{\mathrm{out}}\setminus\Gamma)
=\Sigma\setminus K.
\]
\end{proposition}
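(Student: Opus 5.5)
The idea is to transport everything to $\Sigma$ via the homeomorphism of Proposition~\ref{timelike projection of caps}, and to identify the image of $\Gamma$ there. By that proposition, $P|_C:C\to\Sigma$ is a homeomorphism. Since $\Gamma$ is convex-like and $P$ is a timelike (hence causal) projection, $P|_\Gamma$ is a homeomorphism onto $P(\Gamma)$. Moreover, $P(\Gamma)$ is the boundary of a bounded open convex set $U\subset\Sigma$.

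First I show that $K=\mathrm{Conv}(P(\Gamma))$ equals $\overline U$, which is a compact convex body with $\partial_\Sigma K=P(\Gamma)$. The inclusion $\mathrm{Conv}(\partial U)\subset\overline U$ holds because $\overline U$ is convex. Conversely, every point of a compact convex body lies on a chord with endpoints in its boundary, so $\overline U\subset \mathrm{Conv}(\partial U)$. Hence $\mathrm{int}_\Sigma K=U$ and $\Sigma\setminus P(\Gamma)$ has exactly two components: the bounded one $U$ and the unbounded one $\Sigma\setminus K$. Since $n\geq 2$, the set $\Sigma\setminus K$ is connected, as the complement of a compact convex set in $\mathbb{R}^n$.

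Next I use that $h=P|_C$ is a homeomorphism $C\to\Sigma$ carrying $\Gamma$ onto $P(\Gamma)$. It therefore restricts to a homeomorphism $C\setminus\Gamma\to\Sigma\setminus P(\Gamma)$ and maps connected components bijectively onto connected components. A component of $C\setminus\Gamma$ is bounded exactly when its image is bounded. One direction holds because $P$ is linear and hence Lipschitz. For the other, the inverse $y\mapsto y+f(y)v$ is continuous, so it sends bounded sets to bounded sets (alternatively, $f$ is Lipschitz, as shown in the proof of Proposition~\ref{timelike projection of caps}). Therefore the bounded component of $C\setminus\Gamma$ maps onto $U=\mathrm{int}_\Sigma K$, and the unbounded component maps onto $\Sigma\setminus K$. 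This also confirms that $C\setminus\Gamma$ has exactly one bounded and one unbounded component, consistent with the definition of inner and outer parts.

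Finally I take closures in $C$. A homeomorphism commutes with closures, and $\Gamma$ is compact and hence closed in $C$. So $P(C_{\mathrm{in}})=\overline{U}=K$ and $P(C_{\mathrm{out}})=\overline{\Sigma\setminus K}=\Sigma\setminus\mathrm{int}_\Sigma K$. The last equality holds because every boundary point of a convex body with nonempty interior is a limit of exterior points. Removing $\Gamma$, whose image is $\partial K$, gives the two stated formulas. The restrictions are homeomorphisms onto their images because they are restrictions of the homeomorphism $P|_C$.

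The only delicate point is the elementary convex-geometry identification $\mathrm{Conv}(\partial U)=\overline U$ together with the nonemptiness of $U$. Both follow from Definition~\ref{def-convex-like}, which requires $U$ to be a bounded open convex set whose boundary is a topological $(n-1)$-sphere, so $U\neq\emptyset$. The remaining steps are routine transport of the structure of $\Sigma$ through the homeomorphism $P|_C$.
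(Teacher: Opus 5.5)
Your proof is correct and follows the paper's argument. You transport $C\setminus\Gamma$ to $\Sigma\setminus\partial K$ via the homeomorphism $P|_C$ of Proposition~\ref{timelike projection of caps}, match the bounded and unbounded components using continuity of $(P|_C)^{-1}$ and linearity of $P$, and then take closures; along the way you also supply the details the paper leaves implicit, namely $\mathrm{Conv}(\partial U)=\overline U$, $\operatorname{int}_\Sigma K=U$, and the connectedness of $\Sigma\setminus K$ for $n\geq 2$.
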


\begin{proof}
By Proposition~\ref{timelike projection of caps},
$P|_C:C\to\Sigma$ is a homeomorphism.
Since $\Gamma$ is convex-like, $P(\Gamma)=\partial K$.
Thus $P|_C$ maps the connected components of
$C\setminus\Gamma$ onto the two connected components
\[
\operatorname{int}_{\Sigma}K
\quad\text{and}\quad
\Sigma\setminus K
\]
of $\Sigma\setminus\partial K$.

The inverse image of $K$ under $P|_C$ is compact,
because $K$ is compact and $(P|_C)^{-1}$ is continuous.
Consequently, the component corresponding to
$\operatorname{int}_{\Sigma}K$ is bounded.
The component corresponding to $\Sigma\setminus K$
is unbounded, since its image under the affine map $P$
is unbounded.

Therefore these are precisely the bounded and unbounded
components used to define $C_{\mathrm{in}}$ and
$C_{\mathrm{out}}$.
Taking their closures and using the homeomorphism $P|_C$
proves all the assertions.
\end{proof}

\begin{theorem}\label{inward uniqueness of caps}
Let $\Sigma^+$ be a face-up cap and $\Gamma\subset \Sigma^+$ be a convex-like submanifold. Suppose $A$ is the inner part of $\Sigma^+$ with respect to $\Gamma$ and $B$ is the inner part of $Cap^+(\Gamma)$ with respect to $\Gamma$. Then $A=B$.
\end{theorem}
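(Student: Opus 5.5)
Let $\Sigma^+=Cap^+(M)$ for a support function $M$, and let $M_\Gamma(L)=\inf_{p\in\Gamma}\langle p,L\rangle$ be the upper support function of $\Gamma$. The plan is to show that $\Sigma^+$ and $Cap^+(\Gamma)$ coincide over the convex body $K=\mathrm{Conv}(P(\Gamma))$ for one fixed timelike projection $P=P_{v,\Sigma}$. By Proposition~\ref{projection of inner and outer parts}, $A$ and $B$ are then both the graph over $K$ of the same function. Following the proof of Proposition~\ref{timelike projection of caps}, with $v$ future-pointing, each cap is the graph $y\mapsto y+f(y)v$, where
\[
f(y)=\min_{L\in S_v}\bigl(\langle y,L\rangle-M(L)\bigr),
\qquad
g(y)=\min_{L\in S_v}\bigl(\langle y,L\rangle-M_\Gamma(L)\bigr).
\]
So it suffices to prove $f=g$ on $K$.

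\textbf{Step 1: $f\ge g$ everywhere.} Since $\Gamma\subset\Sigma^+$, Proposition~\ref{caps' support representation} gives $\langle p,L\rangle\ge M(L)$ for all $p\in\Gamma$. Hence $M_\Gamma\ge M$ on $\mathrm{Nullcone}^+$, so $\Omega^+(M_\Gamma)\subset\Omega^+(M)$, and therefore $g\le f$. This means $Cap^+(\Gamma)$ lies below $\Sigma^+$ along $v$.

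\textbf{Step 2: $f\le g$ on $K$.} Fix $y\in K$ and choose $L\in S_v$ attaining $g(y)$, i.e. $g(y)=\langle y,L\rangle-M_\Gamma(L)$. By compactness of $\Gamma$ there is $p\in\Gamma$ with $\langle p,L\rangle=M_\Gamma(L)$. The point $x=y+g(y)v$ satisfies $\langle x-p,L\rangle=0$, using $\langle v,L\rangle=-1$. The idea is to realize $x$ as a point on a null generator that descends from $\Gamma$ into the inner part, and then use Proposition~\ref{outward Nulls} for $\Sigma^+$.

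\textbf{Step 3: the main obstacle.} We need $x-p=-tL+w$ with $t\ge0$, where $w$ has to be handled somehow. This fails directly when the $L$-face of $Cap^+(\Gamma)$ through $x$ touches $\Gamma$ in more than one point, i.e. when $x$ lies in the part of the cap ruled by the convex hull of several contact points (the flat faces in the square example). So instead I will argue by convexity.

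The set $F=\{q\in\Gamma:\langle q,L\rangle=M_\Gamma(L)\}$ is the contact set. Every point of $B$ lies in the convex hull of sets of the form $\{p-tL: p\in F,\ t\ge0\}$. Indeed, the face $\{z\in Cap^+(\Gamma):\langle z,L\rangle=M_\Gamma(L)\}$ is exactly $\mathrm{Conv}(F)-\mathbb{R}_{\ge0}L$ intersected with the cap; I will verify this by a separation argument in the null hyperplane $L^\perp+p$.

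Now fix $p\in F$. Since $\Gamma\subset\Sigma^+$ and $\langle p,L'\rangle\ge M(L')$ for every $L'$, we get $\langle p,L\rangle=M_\Gamma(L)\ge M(L)$. If this inequality is strict, then pushing $p$ along $-L$ keeps the point inside $\overline{\Omega^+(M)}$. In either case, each point $p-tL$ lies in $\overline{\Omega^+(M)}$, and by convexity of the closed tower so does $x$. This gives $f(y)\ge g(y)$ again, which is not what we need.

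So I will instead use Step 1 together with Step 2 in the reverse direction. Because $x\in\overline{\Omega^+(M)}$ and also $x\in\overline{\mathbb{R}^{n+1}_1\setminus\Omega^+(M_\Gamma)}$, it remains to show $x\notin\Omega^+(M)$. Suppose $x\in\Omega^+(M)$. Then a small neighbourhood of $x$ lies in $\Omega^+(M)$. Since $y\in K$ and $P(\Gamma)=\partial K$, move from $x$ along the $P$-preimage segment toward a point of $\Gamma$ above $\partial K$. Using the homeomorphism $P|_{\Sigma^+}$ from Proposition~\ref{projection of inner and outer parts} and convexity of $\Omega^+(M)$, one should reach a point of $\Gamma$ that lies in the open tower $\Omega^+(M)$. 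This contradicts $\Gamma\subset\Sigma^+=\partial\Omega^+(M)$. This convexity step is the delicate point and the one I expect to need the most care.

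Once $f=g$ on $K$ is established, Proposition~\ref{projection of inner and outer parts} gives $A=(P|_{\Sigma^+})^{-1}(K)=(P|_{Cap^+(\Gamma)})^{-1}(K)=B$.
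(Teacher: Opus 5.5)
Your Step 1 is correct: $\Gamma\subset\Sigma^+$ gives $M_\Gamma\ge M$, hence $\Omega^+(M_\Gamma)\subset\Omega^+(M)$ and $g\le f$. Your final reduction is also the right one: it remains to show that no point $x\in B$ lies in the open tower $\Omega^+(M)$.

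The argument meant to deliver this is missing, however. ``One should reach a point of $\Gamma$ that lies in the open tower'' has no mechanism behind it, and convexity of $\Omega^+(M)$ together with the homeomorphism $P|_{\Sigma^+}$ cannot supply one. Convexity only keeps open segments from $x$ to points of $\overline{\Omega^+(M)}$ inside $\Omega^+(M)$. The endpoint on $\Gamma\subset\partial\Omega^+(M)$ is not covered, so no contradiction results.

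Your earlier attempt in Step 3 fails for a more basic reason: the orientation of the null generators is reversed. Points of the inner part lie to the \emph{future} of $\Gamma$ along the generators of the cap. The rays $p-tL$, $t\ge0$, issuing from contact points run into the \emph{outer} part. In particular, the claimed description of the face $\{z\in Cap^+(\Gamma):\langle z,L\rangle=M_\Gamma(L)\}$ as $\mathrm{Conv}(F)-\mathbb{R}_{\ge0}L$ is false. Take the unit circle in $v^\perp$, whose face-up cap is $\mathrm{Nullcone}^-(v)\cup\{v\}$, and normalize $\langle v,L\rangle=-1$. The face is the whole ray $\{v-sL:s\ge0\}$ and the contact point is $p_L=v-L$. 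The segment from $v$ to $p_L$ is exactly the part of the face lying in $B$, and it is not contained in $p_L-\mathbb{R}_{\ge0}L$.

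The missing idea is to run the generator from $x$ itself rather than from a preselected contact point. For $x\in B$, choose $L$ with $\langle x,L\rangle=M_\Gamma(L)$ (Proposition~\ref{caps' support representation}). By Proposition~\ref{outward Nulls}, the ray $x-tL$, $t\ge0$, stays in $Cap^+(\Gamma)$. Since $B$ is compact, the ray must cross $\Gamma$ at some $q=x-t_0L$ with $t_0\ge0$. Such a $q$ is automatically a contact point, so the several-contact-point issue you worried about never arises. If $x\in\Omega^+(M)$, Proposition~\ref{tower and timecone}(b) gives $q\in\overline{\mathrm{Timecone}^-(x)}\subset\Omega^+(M)$, contradicting $q\in\Gamma\subset\partial\Omega^+(M)$.

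Together with your Step 1 this gives $B\subset\Sigma^+$. Then $P(B)=K$ and injectivity of $P|_{\Sigma^+}$ yield $A=B$. This is the mirror image of the paper's proof. The paper starts from $x\in A$ and shows $x\in\overline{\Omega^+(\Gamma)}$ by passing to null projections. It then follows the generator of $\Sigma^+$ through $x$ until it leaves $A$ through $\Gamma$, which produces a contact point.
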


\begin{proof}
It suffices to show that $A \subset B$, since, by Proposition \ref{timelike projection of caps} and Proposition \ref{projection of inner and outer parts}, $A$ and $B$ are both graphs of the (time coordinate component) height function on $P(\mathrm{Conv}(\Gamma))$ for some timelike projection $P$.

Suppose $x \in A$. It suffices to show:

(a)$\forall L^+ \in \mathrm{Nullcone}^+$, $\langle x,L^+ \rangle \geq \inf_{p\in \Gamma} \langle p,L^+ \rangle $;

(b)$\exists L^+ \in \mathrm{Nullcone}^+$, $\langle x,L^+ \rangle = \inf_{p\in \Gamma} \langle p,L^+ \rangle $.

$x$ is in the inner part of $\Sigma^+$ with respect to $\Gamma$, so by continuity, under any null projection $P$, $P(x) \in \mathrm{Conv}(P(\Gamma))$. Hence $x$ is below every upper null support of $\Gamma$ so (a) holds.

By Proposition~\ref{caps' support representation}, there exists $L^+\in\mathrm{Nullcone}^+$ such that
\[
\langle x,L^+ \rangle = \inf_{p\in \Sigma^+} \langle p,L^+ \rangle.
\]

By Proposition~\ref{outward Nulls}, $x-tL^+\in\Sigma^+$ for $t\geq0$. If $x\in\Gamma$, take $t=0$. Otherwise, since $A$ is bounded, this ray first meets its boundary $\Gamma$ at some $t>0$. Thus in either case there exists $t\geq0$ such that $x-tL^+\in\Gamma$. Hence
\[
\inf_{p\in \Gamma} \langle p,L^+ \rangle\leq \langle x-tL^+,L^+ \rangle = \langle x,L^+ \rangle = \inf_{p\in \Sigma^+} \langle p,L^+ \rangle \leq \inf_{p\in \Gamma} \langle p,L^+ \rangle,
\]
so (b) holds.
\end{proof}

\Needspace{4\baselineskip}
Moreover, under a $C^1$ assumption we can acquire the complete uniqueness theorem.

\begin{theorem}\label{outward uniqueness of caps}
Let $C$ be the outer part of $\Sigma^+$ and $D$ be the outer part of $Cap^+(\Gamma)$ with respect to $\Gamma$. If $\Gamma$ is $C^1$ then $C=D$.
\end{theorem}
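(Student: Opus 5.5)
The plan is to identify both outer parts with one explicitly described set: the union of the null rays leaving $\Gamma$ in the ``outgoing'' null normal direction.

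\textbf{Step 1 (two null normals).} Since $\Gamma$ is $C^1$ and convex-like, $T_x\Gamma$ is spacelike for every $x\in\Gamma$ by the corollary to Theorem~\ref{existence of roof}. Hence $(T_x\Gamma)^\perp$ is a Lorentz $2$-plane, and it contains exactly two future-pointing null directions, $L_{\mathrm{in}}(x)$ and $L_{\mathrm{out}}(x)$. I normalize them by $|\cdot|_E=1$; they then depend continuously on $x$. The labelling is fixed by projection: the null hyperplane with normal $L_{\mathrm{out}}(x)$ projects, under a fixed timelike projection $P$, to a supporting hyperplane of $K=\mathrm{Conv}(P(\Gamma))$ at $P(x)$. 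Moreover, $P(x-tL_{\mathrm{out}}(x))$ leaves $K$ for $t>0$, while $P(x-tL_{\mathrm{in}}(x))$ enters $\operatorname{int}K$.

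Next, suppose $\Sigma'$ is any face-up cap with support function $M'$ containing $\Gamma$, and $L$ satisfies $\langle x,L\rangle=M'(L)$ at some $x\in\Gamma$. Then $\langle\cdot,L\rangle-M'(L)\ge0$ on $\Gamma$ with equality at $x$. Differentiating along $C^1$ curves in $\Gamma$ gives $L\perp T_x\Gamma$, so $L$ is a positive multiple of $L_{\mathrm{in}}(x)$ or of $L_{\mathrm{out}}(x)$. This is exactly where the $C^1$ hypothesis enters; without it the family of admissible $L$ at a corner is not determined, which matches the non-uniqueness in the square example.

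\textbf{Step 2 (the ruled set).} Let
\[
U=\{x-tL_{\mathrm{out}}(x)\mid x\in\Gamma,\ t\ge0\}.
\]
I want to show $U\subset C$ and $U\subset D$. By Proposition~\ref{outward Nulls}, it suffices to show that $L_{\mathrm{out}}(x)$ is a supporting null normal at $x$ for both $\Sigma^+$ and $Cap^+(\Gamma)$. Once this holds, each ray $x-tL_{\mathrm{out}}(x)$ lies in the cap. By Step 1 it projects outside $K$ for $t>0$, so by Proposition~\ref{projection of inner and outer parts} it lies in the outer part.

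For $Cap^+(\Gamma)$, take the roof at $x$ from Theorem~\ref{existence of roof}. Its upper support is orthogonal to $T_x\Gamma$, and its projection supports $K$ from outside, so it must be the $L_{\mathrm{out}}$ hyperplane.

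For $\Sigma^+$, I would argue by approximation. Choose $y_k\in C\setminus\Gamma$ with $y_k\to x$, and supporting normals $L_k$ with $|L_k|_E=1$ and $\langle y_k,L_k\rangle=M^+(L_k)$. Pass to a limit $L_k\to L$; by continuity of $M^+$, $L$ is a supporting normal of $\Sigma^+$ at $x$. By Step 1, $L$ is $L_{\mathrm{in}}(x)$ or $L_{\mathrm{out}}(x)$. The case $L=L_{\mathrm{in}}(x)$ has to be ruled out. For this I would use the comparison $M^+\le M_\Gamma$, which holds because $\Gamma\subset\Sigma^+$. It says $\Sigma^+$ lies weakly below $Cap^+(\Gamma)$ as graphs over $\Sigma$, and by Theorem~\ref{inward uniqueness of caps} the two caps agree over $K$. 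If only $L_{\mathrm{in}}$ supported $\Sigma^+$ near $x$, then the graph of $\Sigma^+$ just outside $K$ near $P(x)$ would have to rise above the $L_{\mathrm{out}}$ null hyperplane, contradicting the comparison.

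\textbf{Step 3 (covering).} Finally I show $P(U)=\Sigma\setminus\operatorname{int}K$. Given $y\in D\setminus\Gamma$, take a supporting $L$ with $\langle y,L\rangle=M_\Gamma(L)$. By compactness the infimum is attained at some $p\in\Gamma$. Since $y\in Cap^+(\Gamma)$ and $\langle y-p,L\rangle=0$, I expect $y-p$ to be a nonnegative multiple of $-L$. Step 1 applied at $p$ then forces $L=L_{\mathrm{out}}(p)$, because $y$ projects outside $K$; hence $y\in U$. Thus $D\subset U$, and with Step 2 this gives $D=U\subset C$. Both $C$ and $D$ are graphs over $\Sigma\setminus\operatorname{int}K$ by Proposition~\ref{projection of inner and outer parts}, so $D\subset C$ forces $C=D$.

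I expect the main obstacle to be the $\Sigma^+$ half of Step 2, namely excluding that $\Sigma^+$ is supported at $x\in\Gamma$ only by $L_{\mathrm{in}}(x)$. That is where the comparison $M^+\le M_\Gamma$ together with Theorem~\ref{inward uniqueness of caps} must be used carefully.
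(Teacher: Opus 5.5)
Your architecture matches the paper's: identify both outer parts with the union $U$ of the rays $x-tL_{\mathrm{out}}(x)$. Step~1 is fine, provided $L_{\mathrm{out}}$ is defined as the direction whose projected ray leaves $K$. However, both steps that carry the real content have gaps.

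\textbf{Step 2.} Your exclusion of $L_{\mathrm{in}}(x)$ for $\Sigma^+$ rests on a reversed inequality. From $M^+\le M_\Gamma$ you get $\Omega^+(\Gamma)\subseteq\Omega^+(M^+)$. A face-up tower is the region below its cap (see the proof of Proposition~\ref{timelike projection of caps}), so this says $Cap^+(\Gamma)$ lies weakly \emph{below} $\Sigma^+$. In the paper's square example, $Cap^+(M_1)$ has height $-1$ at $(2,2)$, above the height $-\sqrt2$ of $Cap^+(\Gamma)$. If your inequality also held, the two caps would coincide with no $C^1$ hypothesis at all. The correct comparison only bounds $\Sigma^+$ from below, so it can never forbid $\Sigma^+$ from rising above a null hyperplane outside $K$.

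The missing idea is a null-chord argument on the inner side. Let $L$ be any supporting normal of $\Sigma^+$ at $x\in\Gamma$; one exists by Proposition~\ref{caps' support representation}. Suppose $-P(L)$ points into $K$. By Proposition~\ref{outward Nulls}, the ray $x-tL$, $t\ge0$, lies in $\Sigma^+$. Its projection enters $\operatorname{int}K$ and, being unbounded, leaves $K$ through a point $P(x-t_1L)\in\partial K=P(\Gamma)$ with $t_1>0$. Injectivity of $P|_{\Sigma^+}$ forces $x-t_1L\in\Gamma$, a null chord contradicting Proposition~\ref{prop-acausal}.

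This argument applies verbatim to $Cap^+(\Gamma)$, so it should replace your roof argument. There, ``its projection supports $K$'' cannot distinguish $L_{\mathrm{in}}$ from $L_{\mathrm{out}}$, since a null hyperplane maps onto all of $\Sigma$ under a timelike projection. It also makes the sequence $y_k$ unnecessary.

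\textbf{Step 3.} Your expectation that $y-p$ is a nonnegative multiple of $-L$ fails for an arbitrary minimizer $p$ whenever the contact set $\{p\in\Gamma:\langle p,L\rangle=M_\Gamma(L)\}$ is not a single point, and this happens for $C^1$ submanifolds. Take the stadium $\Gamma=\partial([-1,1]\times\{0\}+\overline{B_1})$ in the plane $x_3=0$ of $\mathbb{R}^3_1$, with $L=(0,-1,1)$. The point $y=(0,1+s,-s)$, $s>0$, lies on the outward ray from $(0,1,0)$. But the minimizer $p=(1,1,0)$ gives $y-p=(-1,s,-s)\notin\mathbb{R}L$.

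To choose $p$ correctly you would follow the generator $y+tL$ to the future until its projection reaches $\partial K$. You would then have to rule out that the generator leaves $\overline{\Omega^+(\Gamma)}$ first, at a point carrying a second supporting normal. That is the real content on the unbounded side, and it is why the ray argument of Theorem~\ref{inward uniqueness of caps} works only for the bounded inner part.

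The paper settles this topologically. For any bounded open convex $K'\supset\overline K$, consider the map $\partial K\to\partial K'$ sending $P(x)$ to the exit point of the ray $P(x-tL_{\mathrm{out}}(x))$. It is homotopic to central projection from some $Q\in\operatorname{int}K$: interpolate $-P(L_{\mathrm{out}}(x))$ with $P(x)-Q$, both of which have positive inner product with the outward normal $\nu_x$. Hence the map has degree $\pm1$ and is onto, so $P(U)=\Sigma\setminus\operatorname{int}K$. Together with $U\subseteq C$, $U\subseteq D$ and injectivity of $P$ on caps, this gives $C=U=D$. Your closing graph argument is correct, but it needs $D\subseteq U$, which Step~3 does not yet supply.
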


\begin{proof}
Fix an orthogonal timelike projection $P_v$ onto a spacelike hyperplane $\pi$, and put $K=\mathrm{int}\,\mathrm{Conv}(P_v(\Gamma))$. By Proposition~\ref{outward Nulls}, for every $x\in\Gamma$ there are future-pointing null vectors $L_x,L'_x\perp T_x\Gamma$ such that
\[
x-tL_x\in\Sigma^+,
\qquad x-tL'_x\in Cap^+(\Gamma),
\qquad t\geq0.
\]
Both generating rays point outward relative to $\Gamma$. Indeed, the projection of either ray cannot enter $K$: since each ray is unbounded, it would then meet $\partial K=P_v(\Gamma)$ again, giving a second point of $\Gamma$ on the same null line. This contradicts the injectivity of null projections on $\Gamma$.

Let $\nu_x\in\pi$ be the unit outward normal to $\partial K$ at $P_v(x)$, and let $\tau_x$ be the future-pointing unit timelike vector in $(T_x\Gamma)^\perp$ orthogonal to $\nu_x$. The two future-pointing null normal rays are generated by $\tau_x+\nu_x$ and $\tau_x-\nu_x$. Since the generating direction is $-L_x$ (respectively $-L'_x$), its outward direction selects $\tau_x-\nu_x$ in both cases. Thus, after a positive rescaling, we may take
\[
L_x=L'_x=\tau_x-\nu_x.
\]
These vectors depend continuously on $x$, because $\Gamma$ is $C^1$.

We want to show that $\{x-tL_x\mid x\in\Gamma,\ t\geq0\}=C$. Let $K'\subset\pi$ be any bounded open convex body with $\overline K\subset K'$. Define $f:\partial K\to\partial K'$ by
\[
f(P_v(x))=\{P_v(x-tL_x)\mid t\geq0\}\cap\partial K'.
\]
Since $P_v(x)\in K'$, the ray has exactly one intersection with $\partial K'$, and $f$ is continuous. Fix $Q\in K$. At $z=P_v(x)$, interpolate the outward ray direction $-P_v(L_x)$ with $z-Q$. Both have positive inner product with $\nu_x$, so this interpolation gives a homotopy from $f$ to the central projection from $Q$ onto $\partial K'$. Hence $f$ has degree $\pm1$ and is surjective.

Every point of $\pi\setminus\overline K$ lies on the boundary of some such $K'$, while $\partial K$ is obtained with $t=0$. Consequently,
\[
\{P_v(x-tL_x)\mid x\in\Gamma,\ t\geq0\}=\pi\setminus K.
\]
Since a timelike projection maps each cap homeomorphically onto $\pi$, this proves $\{x-tL_x\mid x\in\Gamma,\ t\geq0\}=C$. The same argument gives equality with $D$, and therefore $C=D$.
\end{proof}

\section{Nesting}

\begin{definition}
Let $\Gamma$ be a convex-like submanifold and $p$ be a point. We say $p$ \emph{nests in} $\Gamma$ if, for every causal projection $P$,
\[
P(p)\in\mathrm{Conv}(P(\Gamma)).
\]
The convex hull here includes its boundary, so contact is allowed.
\end{definition}

\begin{proposition}
Let $\Gamma$ be a convex-like submanifold. The following statements are equivalent:

(a)$p$ nests in $\Gamma$;

(b)$p\in\overline{\Omega(\Gamma)}$.

Thus the set of all points that nest in $\Gamma$ is $\overline{\Omega(\Gamma)}$.
\end{proposition}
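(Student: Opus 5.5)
The plan is to prove the two inclusions separately. Throughout I use the description
\[
\overline{\Omega(\Gamma)}=\{x\in\mathbb{R}_1^{n+1} \mid \langle x,L\rangle\geq M^\pm(L)\ \ \forall L\in\mathrm{Nullcone}^\pm\},
\]
where $M^\pm(L)=\inf_{q\in\Gamma}\langle q,L\rangle$. It follows from two facts. First, by Proposition~\ref{caps' support representation}, $\overline{\Omega^\pm(\Gamma)}=\Omega^\pm(\Gamma)\cup Cap^\pm(\Gamma)$ is given by the non-strict inequalities. Second, $\Omega^+(\Gamma)$ and $\Omega^-(\Gamma)$ are open convex sets whose intersection is nonempty by Proposition~\ref{nonempty corresponding towers}. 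For two convex sets with a common interior point, the closure of the intersection equals the intersection of the closures.

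\textbf{(a)$\Rightarrow$(b).} Let $p$ nest in $\Gamma$ and fix $L\in\mathrm{Nullcone}^+$; the case $\mathrm{Nullcone}^-$ is symmetric.
\begin{enumerate}
\item Take the null projection $P=P_{L,\Sigma}$ along $L$ itself, onto any spacelike hyperplane $\Sigma$.
\item Since $\langle L,L\rangle=0$, the functional $x\mapsto\langle x,L\rangle$ is constant along the direction $L$. Hence it factors as $\langle x,L\rangle=\ell(P(x))$ for an affine function $\ell$ on $\Sigma$.
\item By hypothesis $P(p)\in\mathrm{Conv}(P(\Gamma))$. An affine function attains its infimum over a convex hull on the generating set, so
\[
\langle p,L\rangle=\ell(P(p))\geq\inf_{P(\Gamma)}\ell=\inf_{q\in\Gamma}\langle q,L\rangle=M^+(L).
\]
\end{enumerate}
Since $L$ was arbitrary, $p\in\overline{\Omega(\Gamma)}$ by the description above.

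\textbf{(b)$\Rightarrow$(a).} Fix a causal projection $P=P_v$ and write $\Omega=\Omega(\Gamma)$. I will show $P(\overline\Omega)=\mathrm{Conv}(P(\Gamma))$, which gives $P(p)\in\mathrm{Conv}(P(\Gamma))$ immediately.
\begin{enumerate}
\item By Theorem~\ref{intersection of corresponding caps}, $\Gamma=Cap^+(\Gamma)\cap Cap^-(\Gamma)$.
\item The proof of Theorem~\ref{intersection of caps}, applied to the towers $\Omega^\pm(\Gamma)$, shows that $P|_\Gamma$ maps $\Gamma$ onto $\partial P(\overline\Omega)$. Steps 1 and 3 of that proof give exactly this equality.
\item The set $\overline\Omega$ is compact (Proposition~\ref{tower and timecone}) and convex. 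So $P(\overline\Omega)$ is a compact convex subset of $\Sigma$.
\item $P(\overline\Omega)$ has nonempty interior, because $\Omega$ is open and nonempty and $P$ is a surjective linear map, hence open.
\item A compact convex body with nonempty interior is the convex hull of its boundary. Therefore
\[
P(\overline\Omega)=\mathrm{Conv}\bigl(\partial P(\overline\Omega)\bigr)=\mathrm{Conv}(P(\Gamma)).
\]
\end{enumerate}
Since $p\in\overline\Omega$, this yields $P(p)\in\mathrm{Conv}(P(\Gamma))$ for every causal projection, i.e.\ $p$ nests in $\Gamma$. The final sentence of the proposition then follows at once.

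\textbf{Main obstacle.} The delicate point is step 2 of (b)$\Rightarrow$(a), the identification $P(\Gamma)=\partial P(\overline\Omega)$, since it relies on the internal steps of the proof of Theorem~\ref{intersection of caps} rather than on its statement. If that route felt insecure, I would argue directly instead. Suppose $P(p)\notin\mathrm{Conv}(P(\Gamma))$. Take a separating functional on $\Sigma$ and pull it back to $\langle\cdot,w\rangle$ with $w\perp v$, so that $w$ is cocausal. The difficulty in this alternative is that writing a spacelike $w$ as a sum of a future null and a past null vector gives the inequality for the sum of the two infima, and this goes in the wrong direction. For that reason I prefer the projection-of-$\overline\Omega$ argument.
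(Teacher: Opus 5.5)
Your proposal is correct and follows the paper's proof. For (a)$\Rightarrow$(b) you project along the null vector $L$ itself, which leaves $\langle\cdot,L\rangle$ unchanged, and for (b)$\Rightarrow$(a) you use the identity $P(\overline{\Omega(\Gamma)})=\mathrm{Conv}(P(\Gamma))$, obtained from Theorem~\ref{intersection of corresponding caps} together with Steps 1 and 3 of the proof of Theorem~\ref{intersection of caps}. You also spell out details the paper leaves implicit: the closure identity $\overline{\Omega^+\cap\Omega^-}=\overline{\Omega^+}\cap\overline{\Omega^-}$, and the fact that a compact convex body with nonempty interior is the convex hull of its boundary.
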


\begin{proof}
By Proposition~\ref{nonempty corresponding towers}, the two corresponding towers have a common interior point. Since they are convex, their intersection satisfies the closure identity in (b).

(a)$\to$(b): If $p\notin\overline{\Omega^+(\Gamma)}$, then for some future-pointing null vector $L$,
\[
\langle p,L\rangle<\inf_{x\in\Gamma}\langle x,L\rangle.
\]
For a projection $P$ along $L$, the functional $\langle\,\cdot\,,L\rangle$ is unchanged by projection. The displayed inequality therefore implies $P(p)\notin\mathrm{Conv}(P(\Gamma))$, a contradiction. The same argument with past-pointing null vectors gives $p\in\overline{\Omega^-(\Gamma)}$.

(b)$\to$(a): By Theorem~\ref{intersection of corresponding caps} and the proof of Theorem~\ref{intersection of caps}, for every causal projection $P$,
\[
P\bigl(\overline{\Omega(\Gamma)}\bigr)=\mathrm{Conv}(P(\Gamma)).
\]
Thus $p\in\overline{\Omega(\Gamma)}$ implies $P(p)\in\mathrm{Conv}(P(\Gamma))$ for every such $P$.
\end{proof}

\begin{definition}
Let $\Gamma_1$ be any subset of $\mathbb{R}_1^{n+1}$ and $\Gamma_2$ be a convex-like submanifold. We say $\Gamma_1$ \emph{nests in} $\Gamma_2$ if every point of $\Gamma_1$ nests in $\Gamma_2$, or equivalently, if for every causal projection $P$,
\[
P(\Gamma_1)\subseteq\mathrm{Conv}(P(\Gamma_2)).
\]
In particular, contact and equality are allowed.
\end{definition}

\begin{proposition}
Let $\Gamma_1$ and $\Gamma_2$ be two convex-like submanifolds. Then the following statements are equivalent:

$($a$)$$\Gamma_1$ nests in $\Gamma_2$;

$($b$)$$\Omega(\Gamma_1)\subseteq\Omega(\Gamma_2)$.
\end{proposition}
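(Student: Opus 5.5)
The plan is to prove both directions by comparing support functions. The tool is the preceding proposition: for a single point $p$, $p$ nests in $\Gamma$ iff $p\in\overline{\Omega(\Gamma)}$. Together with the identity $P(\overline{\Omega(\Gamma)})=\mathrm{Conv}(P(\Gamma))$ for every causal projection $P$ (from the proof of that proposition), this reduces everything to set-theoretic statements about towers.

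For (b)$\Rightarrow$(a), assume $\Omega(\Gamma_1)\subseteq\Omega(\Gamma_2)$. Taking closures gives $\overline{\Omega(\Gamma_1)}\subseteq\overline{\Omega(\Gamma_2)}$. By Theorem~\ref{intersection of corresponding caps}, $\Gamma_1=Cap^+(\Gamma_1)\cap Cap^-(\Gamma_1)$. Since $\Omega(\Gamma_1)$ is a nonempty intersection of two open convex sets, the boundary of each tower lies in the closure of the intersection, so $\Gamma_1\subseteq\overline{\Omega(\Gamma_1)}$. Hence every point of $\Gamma_1$ lies in $\overline{\Omega(\Gamma_2)}$, and by the preceding proposition it nests in $\Gamma_2$.

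For (a)$\Rightarrow$(b), assume $\Gamma_1$ nests in $\Gamma_2$. The preceding proposition gives $\Gamma_1\subseteq\overline{\Omega(\Gamma_2)}\subseteq\overline{\Omega^\pm(\Gamma_2)}$. Thus for every $L\in\mathrm{Nullcone}^\pm$ and $p\in\Gamma_1$ we have $\langle p,L\rangle\ge M_2^\pm(L)$. Taking the infimum over $p$ yields the support function inequality $M_1^\pm(L)\ge M_2^\pm(L)$. From the definition of towers via strict inequalities, if $x\in\Omega^\pm(\Gamma_1)$ then $\langle x,L\rangle>M_1^\pm(L)\ge M_2^\pm(L)$ for all $L$, so $x\in\Omega^\pm(\Gamma_2)$. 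Intersecting the two inclusions gives $\Omega(\Gamma_1)\subseteq\Omega(\Gamma_2)$.

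The only delicate point is the claim $\Gamma_1\subseteq\overline{\Omega(\Gamma_1)}$, i.e.\ that the closure of the intersection of the towers equals the intersection of their closures. I would justify it with the standard convexity fact already used in the proof of the preceding proposition. Pick $q\in\Omega(\Gamma_1)$, which exists by Proposition~\ref{nonempty corresponding towers}. For $x\in\overline{\Omega^+}\cap\overline{\Omega^-}$, the open segment from $q$ to $x$ lies in both open convex towers, so $x$ is a limit of points of $\Omega(\Gamma_1)$. With that, both directions are immediate.
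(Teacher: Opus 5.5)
Your proposal is correct and follows essentially the same route as the paper: (a)$\Rightarrow$(b) via the pointwise characterization and the support-function inequality $M_1^\pm\ge M_2^\pm$, and (b)$\Rightarrow$(a) via $\Gamma_1\subseteq\overline{\Omega(\Gamma_1)}\subseteq\overline{\Omega(\Gamma_2)}$. Your segment argument for the closure identity spells out a convexity step the paper leaves implicit.
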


\begin{proof}
Suppose (a) holds. By the pointwise characterization above, $\Gamma_1\subseteq\overline{\Omega(\Gamma_2)}$. Writing
\[
M_i^\pm(L)=\inf_{x\in\Gamma_i}\langle x,L\rangle,
\qquad L\in\mathrm{Nullcone}^\pm,
\]
we obtain $M_1^\pm(L)\geq M_2^\pm(L)$. Therefore
\[
\Omega^\pm(\Gamma_1)\subseteq\Omega^\pm(\Gamma_2),
\]
which implies (b).

Conversely, (b) gives
\[
\Gamma_1\subseteq\overline{\Omega(\Gamma_1)}
\subseteq\overline{\Omega(\Gamma_2)},
\]
so every point of $\Gamma_1$ nests in $\Gamma_2$. Moreover, each $\Omega(\Gamma_i)$ is the interior of its closure. Thus (b) is also equivalent to
\[
\overline{\Omega(\Gamma_1)}\subseteq\overline{\Omega(\Gamma_2)}.
\]

\end{proof}

\begin{definition}
Let $\Sigma$ be a cap and $\Gamma_1,\Gamma_2\subset\Sigma$ be two convex-like submanifolds. We say $\Gamma_1$ is \emph{inside $\Gamma_2$ on $\Sigma$} if $\Gamma_1$ is contained in the inner part of $\Sigma$ with respect to $\Gamma_2$. Since the inner part includes its boundary, contact and equality are allowed.
\end{definition}

\begin{proposition}\label{insert a convex-like submanifold}
Let $\Gamma_1$ and $\Gamma_2$ be two convex-like submanifolds. If $\Gamma_1$ nests in $\Gamma_2$, then there exists a face-up cap $\Sigma_1$, a face-down cap $\Sigma_2$ and a convex-like submanifold $\Gamma$, such that:

(a)$\Gamma_1$ is inside $\Gamma$ on $\Sigma_1$;

(b)$\Gamma$ is inside $\Gamma_2$ on $\Sigma_2$.
\end{proposition}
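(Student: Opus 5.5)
The natural candidate is to intersect the face-up cap of $\Gamma_1$ with the face-down cap of $\Gamma_2$. Set
\[
\Sigma_1=Cap^+(\Gamma_1),\qquad \Sigma_2=Cap^-(\Gamma_2),\qquad \Gamma=\Sigma_1\cap\Sigma_2 .
\]
By the characterization of nesting, $\Omega(\Gamma_1)\subseteq\Omega(\Gamma_2)$. In particular, $\Omega^+(\Gamma_1)\cap\Omega^-(\Gamma_2)$ contains $\Omega(\Gamma_1)=\Omega^+(\Gamma_1)\cap\Omega^-(\Gamma_1)\subseteq\Omega^-(\Gamma_2)$, and this set is nonempty by Proposition~\ref{nonempty corresponding towers}. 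Hence Theorem~\ref{intersection of caps} shows that $\Gamma$ is a convex-like submanifold. It lies on both $\Sigma_1$ and $\Sigma_2$, so the statements ``inside on $\Sigma_i$'' make sense.

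For (a), we must show that $\Gamma_1$ lies in the inner part of $\Sigma_1$ with respect to $\Gamma$. Note that $\Gamma_1\subset\Sigma_1$ by Theorem~\ref{intersection of corresponding caps}. Fix a timelike projection $P$. By Proposition~\ref{projection of inner and outer parts}, the inner part is $(P|_{\Sigma_1})^{-1}(\mathrm{Conv}(P(\Gamma)))$, so it suffices to prove
\[
P(\Gamma_1)\subseteq\mathrm{Conv}(P(\Gamma)).
\]
The proof of Theorem~\ref{intersection of caps} gives $\mathrm{Conv}(P(\Gamma))=P(\overline{\Omega'})$, where $\Omega'=\Omega^+(\Gamma_1)\cap\Omega^-(\Gamma_2)$. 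Since $\Omega(\Gamma_1)\subseteq\Omega'$, we get
\[
P(\Gamma_1)\subseteq P\bigl(\overline{\Omega(\Gamma_1)}\bigr)\subseteq P(\overline{\Omega'}),
\]
which proves (a).

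For (b), the argument is symmetric. We have $\Gamma_2\subset\Sigma_2$, and we need $P(\Gamma)\subseteq\mathrm{Conv}(P(\Gamma_2))=P(\overline{\Omega(\Gamma_2)})$. This follows from $\overline{\Omega'}\subseteq\overline{\Omega(\Gamma_2)}$. That inclusion holds if $\Omega^+(\Gamma_1)\subseteq\Omega^+(\Gamma_2)$, which was shown in the proof of the nesting equivalence: the upper support functions satisfy $M_1^+\ge M_2^+$. Then $\Omega'\subseteq\Omega^+(\Gamma_2)\cap\Omega^-(\Gamma_2)=\Omega(\Gamma_2)$.

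The step that needs the most care is the identity $P(\overline{\Omega'})=\mathrm{Conv}(P(\Gamma))$ for the mixed pair of towers, and the fact that the relevant inner part is exactly the preimage of this convex body. The first follows from Steps 1 and 3 of the proof of Theorem~\ref{intersection of caps}: the boundary of $P(\overline{\Omega'})$ equals $P(\Gamma)$, and $P(\overline{\Omega'})$ is a compact convex set. The second is the content of Proposition~\ref{projection of inner and outer parts}.
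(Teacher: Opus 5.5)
Your proposal is correct and takes essentially the same approach as the paper. It uses the same choice $\Sigma_1=Cap^+(\Gamma_1)$, $\Sigma_2=Cap^-(\Gamma_2)$, $\Gamma=\Sigma_1\cap\Sigma_2$, gets nonemptiness from $\Omega(\Gamma_1)\subseteq\Omega^+(\Gamma_1)\cap\Omega^-(\Gamma_2)\subseteq\Omega(\Gamma_2)$, and reduces (a) and (b), via Theorem~\ref{intersection of caps} and Proposition~\ref{projection of inner and outer parts}, to the chain $\mathrm{Conv}(P(\Gamma_1))\subseteq\mathrm{Conv}(P(\Gamma))\subseteq\mathrm{Conv}(P(\Gamma_2))$ for a timelike projection $P$.
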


\begin{proof}
Let $\Sigma_1=Cap^+(\Gamma_1)$, $\Sigma_2=Cap^-(\Gamma_2)$ and $\Gamma=\Sigma_1\cap\Sigma_2$. The tower inclusions established above give
\[
\Omega(\Gamma_1)\subseteq D:=\Omega^+(\Gamma_1)\cap\Omega^-(\Gamma_2)
\subseteq\Omega(\Gamma_2).
\]
In particular, $D$ is nonempty, so $\Gamma$ is a convex-like submanifold by Theorem~\ref{intersection of caps}.

Fix a timelike projection $P$. The proof of that theorem identifies $P(\overline D)$ with $\mathrm{Conv}(P(\Gamma))$ and $P(\overline{\Omega(\Gamma_i)})$ with $\mathrm{Conv}(P(\Gamma_i))$. Hence
\[
\mathrm{Conv}(P(\Gamma_1))\subseteq\mathrm{Conv}(P(\Gamma))
\subseteq\mathrm{Conv}(P(\Gamma_2)).
\]
On each cap, $P$ maps the inner part with respect to a convex-like submanifold onto the closed convex body bounded by its projection. Since $\Gamma_1,\Gamma\subset\Sigma_1$ and $\Gamma,\Gamma_2\subset\Sigma_2$, the two displayed inclusions give (a) and (b), respectively.
\end{proof}

\section{Volume comparison}

In the Euclidean space $\mathbb{R}^n$, there holds a volume comparison theorem:

\begin{theorem}\label{euclidean volume comparison}
Let $\Gamma_1$ and $\Gamma_2$ be two convex hypersurfaces in $\mathbb{R}^n$. If $\Gamma_1$ is enclosed by $\Gamma_2$, then
\[
V^{n-1}(\Gamma_1) \leq V^{n-1}(\Gamma_2).
\]
\end{theorem}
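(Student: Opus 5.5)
The approach is the classical nearest-point projection argument. Let $K_1\subseteq K_2$ be the compact convex bodies bounded by $\Gamma_1$ and $\Gamma_2$. Consider the metric projection
\[
\pi:\mathbb{R}^n\to K_1,
\]
which sends each point to its unique nearest point in $K_1$. It is well defined because $K_1$ is closed and convex. The standard obtuse-angle characterization of the nearest point shows that $\pi$ is $1$-Lipschitz: for $x,y$ we have $\langle x-\pi(x),\pi(y)-\pi(x)\rangle\le 0$ and symmetrically, and adding these gives $|\pi(x)-\pi(y)|^2\le\langle x-y,\pi(x)-\pi(y)\rangle$.

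The key geometric step is to show that $\pi(\Gamma_2)=\Gamma_1$. Inclusion: a point $x\in\Gamma_2$ outside $K_1$ projects to $\partial K_1=\Gamma_1$. A point $x\in\Gamma_2\cap K_1$ must already lie on $\Gamma_1$, since $\Gamma_2\cap\operatorname{int}K_1\subseteq\Gamma_2\cap\operatorname{int}K_2=\emptyset$. Surjectivity: take $y\in\Gamma_1$ and an outward normal $\nu$ of a supporting hyperplane of $K_1$ at $y$. The ray $y+t\nu$, $t\ge0$, starts in $K_2$ and eventually leaves the bounded set $K_2$, so it meets $\Gamma_2$ at some $x=y+t_0\nu$. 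Every point of this ray projects to $y$, because $K_1$ lies on the non-positive side of $\nu$ relative to $y$. Hence $\pi(x)=y$.

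Since $(n-1)$-dimensional Hausdorff measure does not increase under $1$-Lipschitz maps, we obtain
\[
V^{n-1}(\Gamma_1)=\mathcal{H}^{n-1}(\pi(\Gamma_2))\le\mathcal{H}^{n-1}(\Gamma_2)=V^{n-1}(\Gamma_2).
\]
Here $V^{n-1}$ is identified with $\mathcal{H}^{n-1}$ on the (Lipschitz) boundaries of convex bodies. The degenerate case in which $K_1$ has empty interior is handled the same way, or by noting that $\Gamma_1$ is then counted as the boundary of a lower-dimensional body, which has zero or doubled area; I would simply assume $K_1$ is full-dimensional, as the statement intends.

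The main obstacle is only bookkeeping: checking that $V^{n-1}$ agrees with Hausdorff measure for nonsmooth convex hypersurfaces. This follows because $\Gamma_i$ is locally a Lipschitz graph, so the area formula applies.

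An alternative I would mention is the Cauchy formula, $V^{n-1}(\Gamma)=c_n\int_{S^{n-1}}|P_u(K)|\,du$, where $P_u$ is orthogonal projection onto $u^\perp$. Projections are monotone under inclusion, which again gives the inequality. Both routes are presumably the "two standard proofs" the paper discusses.
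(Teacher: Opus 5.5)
Your proposal is correct and is essentially the paper's second proof (``Constructing a map''): the nearest-point projection onto $K_1$, its $1$-Lipschitz property, and surjectivity onto $\Gamma_1$ via the outward supporting-normal ray all appear there in the same form. The only differences are cosmetic: you finish with the direct bound $\mathcal{H}^{n-1}(\pi(\Gamma_2))\le\mathcal{H}^{n-1}(\Gamma_2)$ for $1$-Lipschitz maps where the paper uses the area formula with $J_{n-1}F\le 1$, and your Cauchy-formula alternative is the convex-body special case of the paper's Crofton-formula proof.
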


There are two ways to prove Theorem~\ref{euclidean volume comparison}.
\begin{proof}[Crofton Formula]
By the Crofton formula \cite[2.10.15 and 3.2.26]{Federer}, we have
\[
V^{n-1}(\Gamma_1) = \frac{1}{\beta(n, n-1)} \int_{P \in \mathbf{O}^*(n, n-1)} \int_{y \in \text { image } P \cong \mathbf{R}^{n-1}} N(\Gamma_1\cap P^{-1}(y))\,d\mathcal{L}^{n-1}(y)\,dP.
\]

$\Gamma_1$ is enclosed by $\Gamma_2$ so
\[
N(\Gamma_1\cap P^{-1}(y))\leq N(\Gamma_2\cap P^{-1}(y))
\]
holds for every $P \in \mathbf{O}^*(n, n-1)$ and almost every $y \in \text { image } P$. We then conclude
\[
V^{n-1}(\Gamma_1) \leq V^{n-1}(\Gamma_2).
\]
\end{proof}

\begin{proof}[Constructing a map]
Let $K_1\subseteq K_2$ be the compact convex bodies bounded by $\Gamma_1$ and $\Gamma_2$, respectively, and let $\pi_{K_1}:R^n\to K_1$ be the Euclidean nearest-point projection. This map is $1$-Lipschitz: if $p=\pi_{K_1}(x)$ and $q=\pi_{K_1}(y)$, the minimizing property gives
\[
\langle x-p,q-p\rangle_E\leq0,
\qquad
\langle y-q,p-q\rangle_E\leq0,
\]
and hence
\[
|p-q|_E^2\leq\langle x-y,p-q\rangle_E
\leq |x-y|_E\,|p-q|_E.
\]
Since $\partial K_2$ does not meet $\operatorname{int}K_1$, the restriction $F=\pi_{K_1}|_{\Gamma_2}$ takes values in $\Gamma_1$.

This restriction is surjective. Indeed, for any $p\in\Gamma_1$, choose an outward Euclidean unit supporting normal $\nu$ to $K_1$ at $p$. The ray $p+t\nu$, $t\geq0$, has a last point $y$ in $K_2$, and $y\in\Gamma_2$. For every $z\in K_1$, the supporting inequality $\langle\nu,z-p\rangle_E\leq0$ shows that $|y-z|_E^2\geq|y-p|_E^2$, so $F(y)=p$. This also includes the case $y=p$.

The boundaries of convex bodies are rectifiable. By the area formula and the bound $J_{n-1}F\leq1$ almost everywhere, we obtain
\[
V^{n-1}(\Gamma_1)
\leq \int_{\Gamma_1}\#F^{-1}(p)\,d\mathcal H^{n-1}(p)
=\int_{\Gamma_2}J_{n-1}F\,d\mathcal H^{n-1}
\leq V^{n-1}(\Gamma_2).
\]
\end{proof}

\begin{remark} The argument based on the Crofton formula does not directly extend to Lorentz space. On the one hand, the set of timelike projections does not admit a finite measure that is invariant under Lorentz transformations. On the other hand, let us consider the following spacelike curves:
\[
r_1(s) = (10\cos s, 10\sin s, 0),\,r_2(s) = (12\cos s, 12\sin s, \frac{\sin 120s}{12}).
\]

The length of $r_1$ is $20\pi$, and the length of $r_2$ is
\[
\begin{aligned}
\int_0^{2\pi}\sqrt{144-100\cos^2 s}\,ds
&\leq \int_0^{2\pi}(12-\frac{100\cos^2 s}{24})\,ds\\
&=24\pi-\frac{25\pi}{6}<20\pi.
\end{aligned}
\]

However, $r_1\subset\mathrm{int}(\mathrm{Conv}(r_2))$, so for any plane $\pi$, if $\pi$ intersects $r_1$ then $\pi$ intersects $r_2$ in at least two points. Then for almost every plane $\pi$, $N(r_2\cap \pi)\geq N(r_1\cap \pi)$. This shows that, generally in Lorentz space, comparing the number of intersection points with each plane cannot derive the volume comparison.
\end{remark}

\begin{remark} This nearest-point argument does not directly extend to Lorentz space. The Lorentzian quadratic form is indefinite, consequently the nearest-point projection map is not $1$-Lipschitz. However, in Section 7 we will construct a cap-restricted map similar to the nearest-point map. Its preimage set is a subset of the outer submanifold, and it is indeed a Lipschitz surjective map. Moreover, this map does not increase the distance between any two points, providing a proof of the volume comparison theorem in the Lorentz space.
\end{remark}

\section{The proof of volume comparison theorem}

In this section, we use the theorems about Lipschitz maps to define the volume of an arbitrary convex-like submanifold. To consider Lipschitz maps in $\mathbb{R}_1^{n+1}$, we take the Euclidean distance as the metric function. Notice that under different choices of coordinates, the Euclidean distances are equivalent.

Let $\mathcal{H}^m$ be the $m$-dimensional Hausdorff measure and $\mathcal{L}^m$ be the $m$-dimensional Lebesgue measure. Throughout this section, a Lipschitz homeomorphism means a bi-Lipschitz homeomorphism, i.e. both the map and its inverse are Lipschitz.

If a map $f:M\to N$ is differentiable at $x$, denote by $Df_x$ the differential of $f$ at $x$. Moreover, if $k=\mathrm{dim}(M) \leq \mathrm{dim}(N)$ and $i_1:T_xM\to R^k$ is isometry, $i_2:Df_x(T_xM)\to R^k$ is linear isometric embedding, then let $Jf(x)$ be the \textbf{absolute value} of the Jacobian determinant of $i_2\circ Df_x\circ i_1^{-1}$.

\begin{proposition}\label{convex-like lip to sphere}
Suppose $\Gamma$ is a convex-like submanifold. Then there exists a homeomorphism $f:\Gamma\to S^{n-1}$ such that $f,f^{-1}$ are Lipschitz.
\end{proposition}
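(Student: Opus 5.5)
The plan is to factor $f$ as a composition of two bi-Lipschitz maps: first project $\Gamma$ onto a convex hypersurface in a spacelike hyperplane, then radially project that hypersurface onto a round sphere.

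\textbf{Step 1 (projection to a convex hypersurface).} Fix a Lorentzian coordinate system and let $P$ be the orthogonal timelike projection onto $\Sigma=\{x_{n+1}=0\}$. By Definition~\ref{def-convex-like}, $P|_\Gamma$ is a homeomorphism onto $\partial K$, where $K=\mathrm{Conv}(P(\Gamma))$ is a compact convex body in $\Sigma\cong\mathbb{R}^n$. Since $\Gamma$ is compact, $K$ has nonempty interior: if it did not, $\partial K=K$ would not be a topological $(n-1)$-sphere bounding an open convex set. Clearly $P$ is $1$-Lipschitz in the Euclidean metric. For the inverse, take $x,y\in\Gamma$ and write $x-y=(w,t)$ with $w\in\mathbb{R}^n$. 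Theorem~\ref{uniformly spacelike} gives a constant $C$ with $|x-y|_E\le C|x-y|_L$. We have $|x-y|_L^2=|w|^2-t^2\le |w|^2=|P(x)-P(y)|_E^2$. Hence $|x-y|_E\le C|P(x)-P(y)|_E$, so $(P|_\Gamma)^{-1}$ is $C$-Lipschitz. This step is where the uniform spacelikeness does the essential work.

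\textbf{Step 2 (radial projection of a convex body boundary).} Choose $Q\in\operatorname{int}K$ and $r>0$ with $B(Q,r)\subset K\subset B(Q,R)$. Define $g:\partial K\to S^{n-1}$ by $g(z)=(z-Q)/|z-Q|_E$. Convexity and $Q\in\operatorname{int} K$ imply that every ray from $Q$ meets $\partial K$ exactly once, so $g$ is a bijection. It is continuous, so by compactness it is a homeomorphism.

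Lipschitz continuity of $g$ is immediate, since $|z-Q|\ge r$ and $u\mapsto u/|u|$ is $(2/r)$-Lipschitz on $\{|u|\ge r\}$. The inverse is $g^{-1}(\theta)=Q+\rho(\theta)\theta$, where $\rho$ is the radial function of $K$ about $Q$. The Lipschitz property of $g^{-1}$ reduces to showing that $\rho$ is Lipschitz on $S^{n-1}$ with a constant depending only on $r$ and $R$.

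This is the main technical step. I will use the gauge function $\|u\|_K=1/\rho(u/|u|)\cdot|u|$. It is convex and positively homogeneous, and it satisfies $|u|/R\le\|u\|_K\le|u|/r$. Subadditivity then gives $|\|u\|_K-\|v\|_K|\le\|u-v\|_K\le|u-v|/r$. Hence $1/\rho$ is $(1/r)$-Lipschitz on $S^{n-1}$. Since $\rho\le R$, it follows that $\rho$ is $R^2/r$-Lipschitz. Combining this with the boundedness of $\rho$ shows that $\theta\mapsto Q+\rho(\theta)\theta$ is Lipschitz.

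\textbf{Step 3.} Set $f=g\circ P|_\Gamma$. As a composition of bi-Lipschitz homeomorphisms, $f$ is a bi-Lipschitz homeomorphism $\Gamma\to S^{n-1}$.

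I expect the only delicate points to be the inverse bound in Step 1 and the radial-function estimate in Step 2. Step 1 rests entirely on Theorem~\ref{uniformly spacelike}, whose crucial role is controlling the time component of chords, and Step 2 is the gauge-function estimate above. The remaining verifications are routine.
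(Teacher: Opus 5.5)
Your proof is correct and follows the paper's decomposition $f=Q\circ P$ (orthogonal timelike projection onto a convex hypersurface, then central projection onto the sphere). The only real difference is that you bound the inverse radial map through subadditivity of the gauge of $K-Q$, whereas the paper uses a supporting hyperplane at one of the two points together with the cosine law; since your gauge need not be symmetric, the precise inequality is $\bigl|\|u\|_K-\|v\|_K\bigr|\le\max\{\|u-v\|_K,\|v-u\|_K\}\le|u-v|/r$.

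Contrary to your commentary, Step~1 does not need Theorem~\ref{uniformly spacelike}: by acausality (Proposition~\ref{prop-acausal}) every chord satisfies $t^2<|w|^2$, so $|x-y|_E\le\sqrt2\,|P(x)-P(y)|_E$, which is exactly the paper's bound. Uniform spacelikeness becomes essential only later, in the proof of Theorem~\ref{(half) volume comparison}, where Lorentzian and Euclidean chord lengths must be compared.
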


\begin{proof}
Fix an arbitrary coordinate and take the orthogonal timelike projection along $v = (0,...,0,1)$. Its restriction $P : \Gamma \rightarrow \Gamma_0$ from $\Gamma$ to a convex hypersurface $\Gamma_0 \subset v^\perp \cong R^n$. Without loss of generality, let $\Gamma_0\in R^n$ and the unit sphere $S^{n-1}$ be in its interior.

We first show that $P,P^{-1}$ are Lipschitz. Let $x,y\in\Gamma$, we have
\[
|x-y|_E \leq \sqrt{2}|P(x)-P(y)|_E \leq \sqrt{2}|x-y|_E.
\]

Let $K$ be the convex body bounded by $\Gamma_0$, and choose $R>1$ such that $B_1\subset K\subset B_R$. Since the origin is in the interior of $K$, the central projection $Q:\Gamma_0\to S^{n-1}$ is a homeomorphism. By Lemma~\ref{central projection lip}, $Q$ is Lipschitz.

To estimate its inverse, write $x=\alpha u$ and $y=\beta v$, where $x,y\in\Gamma_0$, $u,v\in S^{n-1}$ and $1\leq\beta\leq\alpha\leq R$. Choose a unit outward normal $\nu$ to a supporting hyperplane of $K$ at $y$. Then $\langle\nu,y\rangle_E\geq1$ and $\langle\nu,x\rangle_E\leq\langle\nu,y\rangle_E$. Hence
\[
(\alpha-\beta)\langle\nu,v\rangle_E\leq\alpha\,\langle\nu,v-u\rangle_E,
\qquad \langle\nu,v\rangle_E\geq\frac1\beta,
\]
and consequently
\[
|\alpha-\beta|\leq\alpha\beta|u-v|\leq R^2|u-v|.
\]
By the cosine law,
\[
|x-y|^2=(\alpha-\beta)^2+\alpha\beta|u-v|^2
\leq(R^4+R^2)|u-v|^2.
\]
Thus $Q^{-1}$ is Lipschitz. We obtain the desired map $f=Q\circ P$.
\end{proof}

\begin{definition}
Suppose $\Gamma$ is a convex-like submanifold, $f : \Gamma \rightarrow S^{n-1}$ is a Lipschitz homeomorphism. We define the volume of $\Gamma$ by
\[
V^{n-1}(\Gamma) = \int_{S^{n-1}} J_{n-1}f^{-1}(x) d\mathcal{H}^{n-1}(x)
\]
where $\mathcal{H}^{n-1}$ denotes the $(n-1)$-dimensional Hausdorff measure and $J_{n-1}f^{-1}(x)$ denotes the Lorentzian Jacobian (i.e. the tangent space of $\Gamma$ are equipped with the induced Lorentzian metric.) of $f^{-1}$ at $x$ where $Df^{-1}$ is well defined.

By Corollary~\ref{lip differentiable a.e. on sphere}, $f^{-1}$ is differentiable $\mathcal{H}^{n-1}$ a.e.. By Theorem~\ref{ap derivative} the $Df^{-1}$ is measurable, so the integral is well-defined.

If $f_1$ and $f_2$ are two Lipschitz homeomorphisms, let $F = f_2\circ f_1^{-1}$ and $y = F(x)$. Use Corollary~\ref{lip area formula sphere} and let $u(x) = J_{n-1}f_2^{-1}(y)$, then
\begin{align*}
    &\int_{S^{n-1}} J_{n-1}f_1^{-1}(x) d\mathcal{H}^{n-1}(x)\\
    &= \int_{S^{n-1}} J_{n-1}f_2^{-1}(y)\cdot J_{n-1}F(x) d\mathcal{H}^{n-1}(x)\\
    &=\int_{S^{n-1}} J_{n-1}f_2^{-1}(y) d\mathcal{H}^{n-1}(y),
\end{align*}
which shows that $V^{n-1}(\Gamma)$ is well-defined.
\end{definition}

For smooth convex-like submanifolds, $V^{n-1}(\Gamma)$ coincides with the volume induced by its inherited Riemannian metric.

\begin{lemma}\label{injection enlarging distance}
Let $\Sigma$ be a cap and $\Gamma_1,\Gamma_2\subset \Sigma$ be two convex-like submanifolds. If $\Gamma_1$ is inside $\Gamma_2$ on $\Sigma$, then there exists an injection $\phi : \Gamma_1 \rightarrow \Gamma_2$, satisfying
\[
|\phi(x) - \phi(y)|_L \geq |x-y|_L,\forall x,y \in \Gamma_1.
\]
\end{lemma}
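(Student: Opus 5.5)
We define $\phi$ by sliding each point of $\Gamma_1$ outward along a null generator of the cap until it reaches $\Gamma_2$. Assume $\Sigma=Cap^+(M)$ is face-up; the face-down case is symmetric under reversing the time-orientation.

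\textbf{Step 1: construct $\phi$.} Fix $x\in\Gamma_1$. By Proposition~\ref{caps' support representation} there is $L_x\in\mathrm{Nullcone}^+$ with $\langle x,L_x\rangle=M(L_x)$. By Proposition~\ref{outward Nulls}, the whole ray $x-tL_x$, $t\ge0$, lies in $\Sigma$. Under a timelike projection $P$ the ray is unbounded. By Proposition~\ref{projection of inner and outer parts}, the inner part of $\Sigma$ with respect to $\Gamma_2$ projects onto the compact set $\mathrm{Conv}(P(\Gamma_2))$, and $x$ lies in this inner part. Hence the ray leaves the inner part, and there is a first parameter $t_x\ge0$ with $x-t_xL_x\in\Gamma_2$. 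Set $\phi(x)=x-t_xL_x$. The choice of $L_x$ may not be unique, but any choice will do, since only the properties below are needed.

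\textbf{Step 2: injectivity.} Suppose $\phi(x)=\phi(y)=z$ with $x\neq y$. Then $x=z+t_xL_x$ and $y=z+t_yL_y$. If $L_x\parallel L_y$, then $x-y$ is null, which contradicts Proposition~\ref{prop-acausal} for $\Gamma_1$. If $L_x\not\parallel L_y$, use the support property at $y$: $\langle x-y,L_y\rangle\ge0$ because $x\in\Sigma$. Expanding gives $\langle x-y,L_y\rangle=t_x\langle L_x,L_y\rangle$, and $\langle L_x,L_y\rangle<0$ for non-parallel future null vectors, so $t_x=0$. Symmetrically $t_y=0$, hence $x=z=y$, a contradiction.

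\textbf{Step 3: the distance inequality.} This is the main computation. Write $x'=x-sL_x$ and $y'=y-rL_y$ with $s=t_x\ge0$ and $r=t_y\ge0$. Then
\[
x'-y'=(x-y)-sL_x+rL_y,
\]
and
\[
|x'-y'|_L^2=|x-y|_L^2-2s\langle x-y,L_x\rangle+2r\langle x-y,L_y\rangle-2sr\langle L_x,L_y\rangle .
\]
The support inequalities $\langle y,L_x\rangle\ge M(L_x)=\langle x,L_x\rangle$ and $\langle x,L_y\rangle\ge\langle y,L_y\rangle$ give $\langle x-y,L_x\rangle\le0$ and $\langle x-y,L_y\rangle\ge0$. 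Since $L_x,L_y$ are future null, $\langle L_x,L_y\rangle\le0$. Therefore every added term is nonnegative, and $|x'-y'|_L^2\ge|x-y|_L^2$. Both sides are spacelike squared norms: $\Gamma_1$ is acausal, and $\phi(x),\phi(y)\in\Gamma_2$ are distinct by Step 2, so they are acausally related as well. Taking square roots gives the claim.

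The only delicate point is Step 1, namely that the first exit point lies on $\Gamma_2$. This follows from the homeomorphism $P|_\Sigma$ and the Jordan–Brouwer decomposition into the inner and outer parts of $\Sigma$ with respect to $\Gamma_2$. If $x$ already lies on $\Gamma_2$, then $t_x=0$.
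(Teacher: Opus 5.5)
Your proof is correct and is essentially the paper's argument: you slide each $x\in\Gamma_1$ along a supporting null generator $x-tL_x$ of $\Sigma$ until it first meets $\Gamma_2$, then expand $|\phi(x)-\phi(y)|_L^2$ using the nullity of $L_x,L_y$, the two support inequalities, and $\langle L_x,L_y\rangle\le 0$. The only difference is that the paper deduces injectivity directly from this inequality (distinct points of $\Gamma_1$ satisfy $|x-y|_L>0$), so your separate Step~2 is valid but unnecessary.
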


\begin{proof}
Without loss of generality, suppose $\Sigma$ is a face-up cap. By Propositions~\ref{caps' support representation}, for each $x\in\Gamma_1$, there exists $L_x\in\mathrm{Nullcone}^+$ such that
\[
\langle x,L_x\rangle = \inf_{p\in \Gamma_1} \langle p,L_x\rangle.
\]
By ~\ref{outward Nulls}, $x-tL_x\in\Sigma$ for all $t\geq0$.

We construct a map $\phi:\Gamma_1\to\Gamma_2$ as follows.

If $x\in\Gamma_1\cap\Gamma_2$, set $t_x=0$ and $\phi(x)=x$.

Otherwise, $x$ lies in the inner part of $\Sigma$ with respect to $\Gamma_2$. Since this inner part is bounded, the ray $x-tL_x$ first meets its boundary $\Gamma_2$ at some $t_x>0$; set $\phi(x)=x-t_xL_x$. The intersection is unique, because two distinct points of $\Gamma_2$ cannot be joined by a null chord. Thus $t_x\geq0$ in all cases.

Then we calculate
\begin{align*}
|\phi(x) - \phi(y)|_L^2
    &=\langle x-t_xL_x-y+t_yL_y,x-t_xL_x-y+t_yL_y \rangle\\
    &=\langle x-y,x-y \rangle
      +2\bigl(\langle y-x,t_xL_x \rangle
      +\langle x-y,t_yL_y \rangle\\
    &\hspace{6em}{}-\langle t_xL_x,t_yL_y \rangle\bigr)\\
    &\geq\langle x-y,x-y \rangle=|x-y|_L^2
\end{align*}
Since distinct points of $\Gamma_1$ have positive Lorentzian separation, this inequality also proves that $\phi$ is injective. The calculation remains valid when either parameter is zero.
\end{proof}

\begin{remark}
If $\Gamma_1$ and $\Gamma_2$ are $C^2$, then the mapping $x\mapsto T_x\Gamma_1$ (from $\Gamma$ to the Grassman manifold $G_{n-1}(R^{n+1})$) is $C^1$, hence the mapping $x \mapsto L_x$ could be chosen $C^1$. So $\phi$ is $C^1$ and we easily conclude that $V^{n-1}(\Gamma_1)\leq V^{n-1}(\Gamma_2)$. This method works for piecewise $C^2$ submanifolds, so the volume comparison between smooth convex-like submanifolds or convex-like polyhedrons on one cap could be established.

However, if one smooth convex-like submanifold nests in another, we may fail to use Lemma~\ref{injection enlarging distance} twice and Proposition~\ref{insert a convex-like submanifold} to conclude the volume comparison relation. This is because their corresponding caps have singularities, and the intersection of two caps may be no longer $C^1$. So we need to handle the volume of generic convex-like submanifolds.

In fact, if in Lemma~\ref{injection enlarging distance} $\Gamma_1$ is bad enough, then the map $\phi$ may have dense discontinuity.But the volume comparison could still be deduced from a Lipschitz extension of $\phi^{-1}$.
\end{remark}

\begin{theorem}\label{(half) volume comparison}
Let $\Sigma$ be a cap and $\Gamma_1,\Gamma_2\subset \Sigma$ be two convex-like submanifolds. If $\Gamma_1$ is inside $\Gamma_2$ on $\Sigma$, then
\[
V^{n-1}(\Gamma_2) \geq V^{n-1}(\Gamma_1).
\]

\end{theorem}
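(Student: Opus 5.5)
The plan is to invert the injection of Lemma~\ref{injection enlarging distance} and extend the inverse to a Lipschitz surjection onto $\Gamma_1$ that does not increase Lorentzian separations. Then an area-formula argument, as in the Euclidean nearest-point proof, gives the volume inequality.

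Let $\phi:\Gamma_1\to\Gamma_2$ be the injection from Lemma~\ref{injection enlarging distance}, and set $A=\phi(\Gamma_1)\subset\Gamma_2$. On $A$ define $\psi=\phi^{-1}:A\to\Gamma_1$. The lemma then gives
\[
|\psi(a)-\psi(b)|_L\le |a-b|_L,\qquad a,b\in A.
\]
To make this metric usable, I will compare $|\cdot|_L$ with $|\cdot|_E$ on a convex-like submanifold. By Theorem~\ref{uniformly spacelike} there is a constant $c>0$ with $c|x-y|_E\le |x-y|_L\le |x-y|_E$ for $x,y\in\Gamma_i$. The upper bound holds because chords are spacelike, so $|v|_L\le|v|_E$. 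Consequently $\psi$ is Euclidean-Lipschitz on $A$, with constant $1/c$, and it extends uniquely and continuously to $\overline A$.

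The next step is to show that $\overline A$ is all of $\Gamma_2$ in the sense needed, or at least that the extension can be pushed to all of $\Gamma_2$ with image $\Gamma_1$.

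\begin{itemize}
\item \emph{Surjectivity onto $\Gamma_1$.} Since $\psi(A)=\Gamma_1$, the extension of $\psi$ to $\overline A$ already maps onto $\Gamma_1$.
\item \emph{Extension to $\Gamma_2$.} I will compose with a bi-Lipschitz chart $f_2:\Gamma_2\to S^{n-1}$ from Proposition~\ref{convex-like lip to sphere} and an embedding of $\Gamma_1$ into $\mathbb{R}^{n+1}$. Kirszbraun (or McShane componentwise) extends $\psi\circ f_2^{-1}$ from $f_2(\overline A)$ to all of $S^{n-1}$ with values in $\mathbb{R}^{n+1}$. I then post-compose with a Lipschitz retraction to $\Gamma_1$, namely $P^{-1}\circ\rho$, where $P$ is a timelike projection and $\rho$ is the Euclidean nearest-point retraction onto the convex body $K$ bounded by $P(\Gamma_1)$, followed by radial projection to $\partial K$ away from the interior.
\end{itemize}

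This last composition is not globally Lipschitz, because radial projection is not Lipschitz near the centre. For this reason I will avoid extending off $\overline A$ if possible. Instead I will observe that the area formula only needs a Lipschitz map defined on a closed subset $\overline A\subset\Gamma_2$ whose image covers $\Gamma_1$, with $\mathcal H^{n-1}(\Gamma_2\setminus\overline A)$ contributing nothing negative. Lipschitz maps on closed subsets of rectifiable sets still satisfy the area formula (Appendix~A). So the extension step is unnecessary.

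The key pointwise estimate is on the Jacobian. At $\mathcal H^{n-1}$-a.e.\ density point $a\in\overline A$ where both $\Gamma_2$ (through its chart) and $\psi$ are differentiable, $\Gamma_2$ has an approximate tangent plane $T_a$. The inequality $|\psi(a)-\psi(b)|_L\le|a-b|_L$ passes to the limit along $b\to a$ in $\overline A$, which is possible because a density point has $\overline A$ directions spanning $T_a$. The result is $|D\psi_a w|_L\le |w|_L$ for $w\in T_a$.

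Both tangent planes are spacelike: the image plane is spacelike by the uniform spacelikeness of $\Gamma_1$, or degenerate with Jacobian $0$. Hence $D\psi_a$ is a contraction between inner product spaces, and the Lorentzian Jacobian satisfies $J_{n-1}\psi(a)\le 1$. The area formula then gives
\[
V^{n-1}(\Gamma_1)\le\int_{\Gamma_1}\#\psi^{-1}(p)\,d\mathcal H^{n-1}_L
=\int_{\overline A}J_{n-1}\psi\le V^{n-1}(\overline A)\le V^{n-1}(\Gamma_2).
\]
All volumes here are computed as in the Definition, by pulling back to $S^{n-1}$ via Proposition~\ref{convex-like lip to sphere}, so the Lorentzian Jacobians are the ones used there.

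The main obstacle is the rigorous passage from the chordwise inequality to the differential inequality, together with the measure-theoretic area formula in the Lorentzian-weighted setting. Concretely, one must check that a.e.\ point of $\overline A$ is a point of differentiability of $\psi\circ f_2^{-1}$ relative to the set $f_2(\overline A)$, and that chords from such points fill the tangent plane. I would handle this with Lebesgue density in the chart on $S^{n-1}$ and the approximate-derivative results cited from Appendix~A (Theorem~\ref{ap derivative}). The remaining steps are routine once this is in place.
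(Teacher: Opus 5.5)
Your proposal is correct and follows essentially the paper's route: invert the injection of Lemma~\ref{injection enlarging distance}, use Theorem~\ref{uniformly spacelike} to make the inverse Lipschitz on the closure $\overline A$ of $A=\phi(\Gamma_1)$ (the paper's $E=\overline T$ in the chart), obtain $B^{\mathsf T}G_1B\preceq G_2$ at density-one points, and conclude with the area formula using surjectivity onto $\Gamma_1$. The differences are cosmetic: the paper extends the charted map to a Lipschitz $F:S^{n-1}\to S^{n-1}$ via Corollary~\ref{lip extension on sphere} and inserts the weight $\mathbf 1_E$ instead of applying the area formula on a closed set; and at a density point you need chord directions from $\overline A$ approaching \emph{every} vector of $T_a$, not merely a spanning set, because the inequality is quadratic, which is exactly what density one provides.
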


\begin{proof}
We only consider $(n-1)$-dimensional Jacobians. We use $J_L$ for Lorentzian Jacobians and $J$ for the usual Jacobians between unit spheres.

Let $P_i : \Gamma_i \rightarrow S^{n-1}$ be Lipschitz homeomorphisms and $X_i = P_i^{-1}$. It suffices to show that
\[
\int_{S^{n-1}} J_LX_1({y}) d\mathcal{H}^{n-1}({y}) \leq \int_{S^{n-1}} J_LX_2({x}) d\mathcal{H}^{n-1}({x}).
\]
Write $a_i=J_LX_i$ and set $a_i=0$ at points where $X_i$ is not differentiable. These functions are bounded, and this convention does not change the volume integrals.

By Lemma~\ref{injection enlarging distance}, there exists an injection $\phi : \Gamma_1 \rightarrow \Gamma_2$ such that
\[
|\phi(y_1) - \phi(y_2)|_L \geq |y_1-y_2|_L,\forall y_1,y_2 \in \Gamma_1.
\]

Let $G = \phi(\Gamma_1)$ and $T = P_2(G)$. Then $\phi^{-1} : G \rightarrow \Gamma_1$ satisfies
\begin{equation}\label{inq distance}
    |\phi^{-1}(x_1) - \phi^{-1}(x_2)|_L \leq |x_1-x_2|_L.
\end{equation}

By Theorem~\ref{uniformly spacelike}, $\hat{\phi} =P_1 \circ \phi^{-1} \circ P_2^{-1}$ is a Lipschitz surjective map from $T \subset S^{n-1}$ to $S^{n-1}$. Let ${F} : S^{n-1} \rightarrow S^{n-1}$ be its Lipschitz extension (Corollary~\ref{lip extension on sphere}).

Let $E=\overline{T}$. Then $E$ is compact and $ F(E)=S^{n-1}$. By continuity, (\ref{inq distance}) implies
\begin{equation}\label{inq distance E}
    |X_1\circ F(a)-X_1\circ F(b)|_L
\leq |X_2(a)-X_2(b)|_L,\qquad a,b\in E.
\end{equation}

We claim that
\begin{equation}\label{jacobian inq}
a_1(F(x))\,JF(x)\le a_2(x)
\qquad\text{for $\mathcal H^{n-1}$-almost every }x\in E.
\end{equation}
Indeed, almost every $x\in E$ is a density-one point of $E$
at which both $F$ and $X_2$ are differentiable.
If $X_1$ is not differentiable at $F(x)$, then
$a_1(F(x))=0$, so the claim is immediate.

Otherwise, fix $v\in T_xS^{n-1}$. The density-one property allows
us to choose $t_k\downarrow0$ and $x_k\in E$ such that
\[
\exp_x^{-1}(x_k)=t_kv+o(t_k).
\]
For $v\ne0$, failure of this approximation would leave balls
of radius comparable to their distance from $x$ disjoint from $E$,
contradicting density one. The case $v=0$ is immediate.
Taking $y=x$ in \eqref{inq distance E}, dividing by $t_k^2$,
and passing to the limit gives
\[
|D(X_1\circ F)_xv|_L^2\le |D(X_2)_xv|_L^2.
\]

In Euclidean orthonormal bases of $T_xS^{n-1}$ and $T_{F(x)}S^{n-1}$,
let $B$ be the matrix of $DF_x$, and let $G_2,G_1$ be the
Lorentzian Gram matrices of $D(X_2)_x,D(X_1)_{F(x)}$, respectively.
These Gram matrices are positive definite by uniform spacelikeness
and the bi-Lipschitz property of the parameterizations.
The preceding inequality says
\[
B^{\mathsf T}G_1B\preceq G_2.
\]
Taking determinants and square roots yields
\[
a_1(F(x))\,JF(x)
=\sqrt{\det G_1}\,|\det B|
\le \sqrt{\det G_2}
=a_2(x).
\]
This proves \eqref{jacobian inq}.

Finally, apply Corollary~\ref{lip area formula sphere}
with the bounded nonnegative weight
$\mathbf 1_E(a_1\circ F)$. Since $F(E)=S^{n-1}$, every point of
$S^{n-1}$ has at least one preimage in $E$. Consequently, by the area formula,
\begin{align*}
V^{n-1}(\Gamma_1)
&=\int_{S^{n-1}}a_1(y)\,d\mathcal H^{n-1}(y)\\
&\le
\int_{S^{n-1}}
\sum_{\substack{x\in E\\F(x)=y}}
a_1(y)\,d\mathcal H^{n-1}(y)\\
&=\int_E a_1(F(x))\,JF(x)\,d\mathcal H^{n-1}(x)\\
&\le\int_E a_2(x)\,d\mathcal H^{n-1}(x)\\
&\le\int_{S^{n-1}}a_2(x)\,d\mathcal H^{n-1}(x)
=V^{n-1}(\Gamma_2).
\end{align*}

\end{proof}

Theorem \ref{volume comparison} immediately follows from Theorem~\ref{(half) volume comparison} and Proposition~\ref{insert a convex-like submanifold}.

\section{Distance and convergence}


We use $h(A,B)$ to represent the one-way Hausdorff distance, i.e., for $A,B\subset \mathbb{R}_1^{n+1}$,
\[
h(A,B) = \sup_{x\in A}\mathrm{dist}(x,B) = \sup_{x\in A} ( \inf_{y\in B} \mathrm{dist}(x,y));
\]
and
\[
H(A,B) = \max \{h(A,B),h(B,A)\}
\]
denotes the Hausdorff distance between $A$ and $B$.

Let $B_r$ be the Euclidean ball centered at the origin with radius $r$, and let $B(x,r)$ be the Euclidean ball centered at $x$ with radius $r$.

Denote by $+$ the sum of subsets, i.e. for $A,B\subset \mathbb{R}_1^{n+1}$,
\[
A +B = \{x+y|x\in A,y \in B\}.
\]

For background on support functions and the Hausdorff metric
for Euclidean convex bodies, see
\cite[Sections~1.7--1.8]{Schneider}.

Remark: Under a Lorentzian coordinate transform, the Euclidean length, Hausdorff distance and Euclidean ball changes. However, the topology on points given by Euclidean distance doesn't change, and the topology on subsets(in particular, convex-like submanifolds) given by Hausdorff distance doesn't change.

\begin{lemma}
Fix a Lorentzian coordinate. Suppose two face-up towers $\Omega^+$, $\Omega_1^+$ and a face-down tower $\Omega^-$ satisfy:

\[
\begin{aligned}
\text{(a)}\quad &\Omega_1^+ \subset \Omega^+;\\
\text{(b)}\quad &B_r \subset \Omega_1^+ \cap \Omega^- \subset \Omega^+ \cap \Omega^- \subset B_R;\\
\text{(c)}\quad &h(Cap_1^+ \cap B_R, Cap^+) = d \leq r/2.
\end{aligned}
\]

Then
\[
h(Cap_1^+ \cap Cap^-, Cap^+\cap Cap^-) \leq \frac{R^2}{r^2}d.
\]

\end{lemma}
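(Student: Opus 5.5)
The plan is to read the estimate as a statement about radial projection onto the boundary of the convex body
\[
K=\overline{\Omega^+\cap\Omega^-},\qquad K_1=\overline{\Omega_1^+\cap\Omega^-}\subset K .
\]
By (b), both bodies contain $B_r$ and lie in $\overline{B_R}$. By Theorem~\ref{intersection of caps}, $\Gamma_1:=Cap_1^+\cap Cap^-$ and $\Gamma:=Cap^+\cap Cap^-$ are convex-like and lie in $\partial K_1$ and $\partial K$ respectively. Moreover $\partial K=(\partial K\cap Cap^+)\cup(\partial K\cap Cap^-)$, and the two pieces meet exactly in $\Gamma$. Fix $x\in\Gamma_1$; the goal is a point $q\in\Gamma$ with $|x-q|_E\le R^2d/r^2$. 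If $x\in\Gamma$ there is nothing to prove, so assume $x\notin\Gamma$. Since $x\in Cap^-\cap\overline{\Omega_1^+}\subset Cap^-\cap\overline{\Omega^+}$, the point $x$ then lies in $\partial K\cap Cap^-$ and in the open tower $\Omega^+$. The task is therefore to walk along $\partial K\cap Cap^-$ from $x$ until the path first leaves $\Omega^+$, and to bound the length of that walk.

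\textbf{Step 1 (nearby exterior point).} Since $x\in Cap_1^+\cap B_R$, hypothesis (c) gives $y\in Cap^+$ with $|x-y|_E\le d$. I will use the standard convexity fact for a convex set $C\supset B_r$ with $0\in C$: if $z\in\overline C$ and $\lambda\in(0,1)$, then $\lambda z+(1-\lambda)B_r\subset C$. Applying this to $C=\Omega^+$ and the boundary point $y$ shows that the dilate $(1+s)y$ lies outside $\overline{\Omega^+}$ once $s$ exceeds a threshold of order $d/r$. Here I use $d\le r/2$ so that $r-d\ge r/2$. Set $y'=(1+s)y$ with $s\approx 2d/r$. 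Then
\[
|y'-x|_E\le d+sR\lesssim \frac{R}{r}\,d .
\]

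\textbf{Step 2 (radial projection onto $\partial K$).} Let $\rho:\partial K\to S^{n-1}$ be the central projection from the origin. By the argument of Proposition~\ref{convex-like lip to sphere} (with $B_r\subset K\subset B_R$ in place of $B_1\subset K\subset B_R$), $\rho$ is Lipschitz with constant about $1/r$, and $\rho^{-1}$ is Lipschitz with constant of order $R^2/r$. Join $\rho(x)$ to $\rho(y')$ by a great-circle arc of length $\lesssim|x-y'|_E/r$, and lift it to a path $\gamma=\rho^{-1}(\text{arc})$ in $\partial K$ starting at $x$.

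The endpoint $\gamma(1)$ is the boundary point of $K$ in the direction of $y'$. Since $K\cap\mathbb{R}_{>0}y'\subset\overline{\Omega^+}$ stops before $y'$, the point $\gamma(1)$ is either outside $\Omega^+$ or on its boundary, so $\gamma(1)\in Cap^+$. The path starts at $x\in Cap^-\cap\Omega^+$ and ends at a point of $Cap^+$. The sets $\partial K\cap Cap^-$ and $\partial K\cap Cap^+$ are closed and cover $\partial K$, so $\gamma$ must hit their common part $\Gamma$ at some point $q$. The resulting estimate is
\[
|x-q|_E\le \operatorname{length}(\gamma)\le \frac{R^2}{r}\cdot\frac{|x-y'|_E}{r}\approx \frac{R^2}{r^2}\,d .
\]

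\textbf{Main obstacle.} The crude chain in Step 2 produces the factor $R^2/r^2$ only up to an absolute constant, and an extra factor $R/r$ creeps in from $|x-y'|_E\lesssim Rd/r$. The delicate step is therefore choosing $y'$ and the path more economically. The first thing I would try is to avoid the dilation entirely and project the segment $[x,y]$ radially onto $\partial K$. I would then argue that the radial boundary point above $y$ already lies outside $\Omega^+$ or on $Cap^+$, using that $y\in Cap^+$ and that $K\subset\overline{\Omega^+}$ is star-shaped. With that done, the only loss is the Lipschitz constant $R^2/r^2$ of the radial reparametrization, matching the claimed bound.
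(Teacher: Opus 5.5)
There is a genuine gap in Step~2, in the claim that $\gamma(1)\in Cap^+$.

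\textbf{The endpoint claim.} Since $y'=(1+s)y$ lies on the ray through $y$, the dilation changes nothing: $\gamma(1)$ is simply the radial boundary point of $K=\overline{\Omega^+}\cap\overline{\Omega^-}$ in the direction of $y$. The extra factor $R/r$ you worry about comes only from bounding the arc by $|x-y'|_E$ instead of $|x-y|_E$. That ray leaves $\overline{\Omega^+}$ exactly at $y$, but it leaves $K$ at its first exit from $\overline{\Omega^+}$ \emph{or} from $\overline{\Omega^-}$. If $y\notin\overline{\Omega^-}$, the ray exits earlier through $Cap^-$ at a point of $\Omega^+$. Then both ends of $\gamma$ lie in $Cap^-\cap\Omega^+$, and the connectedness argument yields nothing.

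With your choice of $y$ (any point of $Cap^+$ within $d$ of $x$) this can happen. If $\mathrm{dist}(x,Cap^+)$ is much smaller than $d$ and $x$ is near $\Gamma$, such a $y$ may lie beyond $\Gamma$, outside $\overline{\Omega^-}$. Star-shapedness of $K$ does not exclude this.

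The missing idea is causal. Take $y$ to be the Euclidean nearest point of $Cap^+$ to $x$. Then the hyperplane through $y$ that is Euclidean-orthogonal to $y-x$ supports $\Omega^+$. Because $\Omega^+$ contains the closed past cone of each of its points, this hyperplane has a future causal Lorentzian normal $\nu=(\nu_s,\nu_t)$. Hence $y-x$ is a positive multiple of $(-\nu_s,\nu_t)$, which is future causal. So $y\in\overline{\mathrm{Timecone}^+(x)}\subset\overline{\Omega^-}$, and $y\in K\cap Cap^+$ is its own radial boundary point.

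\textbf{The constant.} Even after this repair, your Lipschitz chain does not give the stated bound. Rescaling the proof of Proposition~\ref{convex-like lip to sphere} gives $\mathrm{Lip}(\rho^{-1})=\frac{R}{r}\sqrt{R^2+r^2}$, and the normalization $z\mapsto z/|z|_E$ costs a further factor of roughly $1/r$. So you get only $C\frac{R^2}{r^2}d$ with some $C>1$; it is not ``exactly the Lipschitz constant $R^2/r^2$''.

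A direct estimate does close the gap. Take $z\in[x,y]$ whose radial image $\lambda z$ lies in $\Gamma$. Convexity gives $B(z,(1-\lambda^{-1})r)\subset K$, while $\mathrm{dist}(z,\partial K)\le d/2$. Hence $\lambda-1\le d/(2r-d)$ and
\[
|x-\lambda z|_E\le d+\frac{Rd}{2r-d}.
\]
This is at most $R^2d/r^2$, because $d\le r/2$ and $R\ge\sqrt2\,r$. The last inequality holds since $se_t\in\Omega^+\cap\Omega^-$ for $0\le s<\sqrt2\,r$, where $e_t=(0,\dots,0,1)$; this uses $M^+(L)\le -r|L|_E$. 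You also need the case $x\in\Gamma_1\cap\partial B_R$, where (c) does not apply directly; the paper handles it by approximation.

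\textbf{Comparison with the paper.} The paper avoids radial projection altogether. It follows the null generator $x-sL^-$ of $Cap^-$ to its first exit point $y=x-L^-\in\Gamma$. It then applies convexity of $\Omega^+$ to $y$ and the ball $q+B_r\subset\Omega^+$, where $q=x+tL^-$ and $t=R^2/(2r|L^-|_E)$. This gives $B(x,r/(1+t))\subset\Omega^+$, and hence directly $|L^-|_E\le R^2d/(2r(r-d))$.
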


\begin{proof}
Let $K_1=\Omega_1^+\cap\Omega^-$, $K=\Omega^+\cap\Omega^-$, $\Gamma_1=Cap_1^+\cap Cap^-$ and $\Gamma=Cap^+\cap Cap^-$. Since these towers contain $B_r$, the closure of each intersection equals the intersection of the corresponding closures. In particular, $\Gamma_1\subset\overline{K_1}\subset\overline B_R$.

We first explain why (c) gives $\mathrm{dist}(x,Cap^+)\leq d$ also when $x\in\Gamma_1\cap\partial B_R$. For $0<\varepsilon<1$, the point $(1-\varepsilon)x$ belongs to $K_1$. Move it in the future vertical direction until it reaches $Cap_1^+$, and denote the resulting point by $z_\varepsilon$. Every cap is the graph of a continuous function over the spatial hyperplane, so $z_\varepsilon\to x$. By Proposition~\ref{tower and timecone}, $z_\varepsilon\in\Omega^-$. A sufficiently short past-pointing null-generator segment from $z_\varepsilon$ stays in $Cap_1^+\cap\Omega^-$ by Propositions~\ref{caps' support representation} and~\ref{outward Nulls}. This segment lies in $\overline{K_1}\subset\overline B_R$, and its interior points lie in $B_R$ by strict convexity of the Euclidean ball. Thus $x\in\overline{Cap_1^+\cap B_R}$, and the desired bound follows by continuity of the distance function.

Fix $x\in\Gamma_1$. If $x\in\Gamma$, there is nothing to prove. Otherwise $x\in\Omega^+$. By Propositions~\ref{caps' support representation} and~\ref{outward Nulls}, there is a past-pointing null vector $L^-$ such that $x+(L^-)^\perp$ supports $\Omega^-$ and $x-sL^-\in Cap^-$ for all $s\geq0$. This ray must leave $\Omega^+$, since $\overline{\Omega^+}\cap\overline{\Omega^-}=\overline K$ is bounded. Its first exit point belongs to $\Gamma$. Rescaling $L^-$ by a positive factor, we may therefore write this point as $y=x-L^-$.

The supporting hyperplane gives $\langle z-x,L^-\rangle\geq0$ for every $z\in B_r$. Hence
\[
\langle x,L^-\rangle
\leq\inf_{z\in B_r}\langle z,L^-\rangle
=-r|L^-|_E.
\]
Set $t=R^2/(2r|L^-|_E)$ and $q=x+tL^-$. Since $|x|_E\leq R$, we obtain
\[
\langle q,q\rangle
=\langle x,x\rangle+2t\langle x,L^-\rangle
\leq R^2-2tr|L^-|_E=0.
\]
Moreover, $\langle q,L^-\rangle=\langle x,L^-\rangle<0$, so $q$ belongs to the past-pointing closed causal cone $\overline{\mathrm{Timecone}^-(O)}$. For every $z\in B_r\subset\Omega^+$, Proposition~\ref{tower and timecone} gives $z+q\in\Omega^+$; thus $q+B_r\subset\Omega^+$.

Because $y\in\partial\Omega^+$ and $\Omega^+$ is open and convex,
\[
(1-\lambda)y+\lambda(q+B_r)\subset\Omega^+,
\qquad 0<\lambda\leq1.
\]
Taking $\lambda=1/(1+t)$ and using $(ty+q)/(1+t)=x$, we get
\[
B\left(x,\frac{r}{1+t}\right)\subset\Omega^+.
\]
Consequently,
\[
d\geq\mathrm{dist}(x,Cap^+)
\geq\frac{r}{1+t}
=\frac{r}{1+\frac{R^2}{2r|L^-|_E}},
\]
and therefore
\[
\mathrm{dist}(x,\Gamma)\leq|x-y|_E=|L^-|_E
\leq\frac{R^2d}{2r(r-d)}
\leq\frac{R^2}{r^2}d.
\]
Taking the supremum over $x\in\Gamma_1$ proves the assertion.
\end{proof}

\begin{corollary}\label{nesting distance control}
Fix a Lorentz coordinate system. Suppose two face-up towers $\Omega_1^+$, $\Omega_2^+$ and two face-down towers $\Omega_1^-$ and $\Omega_2^-$ satisfy:

\[
\begin{aligned}
\text{(a)}\quad &B_r \subset \Omega_1^+ \cap \Omega_1^- \subset \Omega_2^+ \cap \Omega_2^- \subset B_R;\\
\text{(b)}\quad &h(Cap_1^\pm \cap B_R, Cap_2^\pm) \leq d \leq r/2.
\end{aligned}
\]

Then
\[
h(Cap_1^+ \cap Cap_1^-, Cap_2^+\cap Cap_2^-) \leq \frac{2R^2}{r^2}d.
\]

\end{corollary}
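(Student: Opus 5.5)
The natural plan is to reduce the corollary to two applications of the preceding Lemma, each time changing only one cap, through an intermediate pair of towers. Set $\Omega_3^+=\Omega_1^+$ and $\Omega_3^-=\Omega_2^-$. Then define $\Gamma_i=Cap_i^+\cap Cap_i^-$ for $i=1,2$, and let $\Gamma_3=Cap_1^+\cap Cap_2^-$.

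First we need the tower inclusions. Hypothesis (a) only gives the inclusion of intersections, so the first step is to recover $\Omega_1^\pm\subset\Omega_2^\pm$ separately. I would do this by passing through support functions. The intersection $\Omega_1^+\cap\Omega_1^-$ is nonempty, so by Theorem~\ref{intersection of caps} the set $\Gamma_1$ is convex-like. Theorem~\ref{intersection of corresponding caps} then gives $\Gamma_1=Cap^+(\Gamma_1)\cap Cap^-(\Gamma_1)$. The inner-part uniqueness (Theorem~\ref{inward uniqueness of caps}) shows that on the relevant bounded region the cap $Cap_1^\pm$ coincides with $Cap^\pm(\Gamma_1)$.

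If global inclusion fails, I would instead work only with the parts of the caps inside $B_R$, which is what the Lemma actually uses. Concretely, I would check that the proof of the Lemma uses hypothesis (a) only through the fact that $x\in\Gamma_1\setminus\Gamma$ lies in $\Omega^+$. For $x\in\Gamma_1\subset\overline{\Omega_1^+\cap\Omega_1^-}\subset\overline{\Omega_2^+\cap\Omega_2^-}$, this containment follows directly from (a) of the corollary.

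With this in hand the argument has two steps.

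\emph{Step 1.} Apply the Lemma to the face-up towers $\Omega_1^+\subset\Omega_2^+$ with the face-down tower $\Omega_2^-$. Hypothesis (b) of the Lemma, namely $B_r\subset\Omega_1^+\cap\Omega_2^-\subset\Omega_2^+\cap\Omega_2^-\subset B_R$, follows from (a) together with $\Omega_1^-\subset\Omega_2^-$. Hypothesis (c) of the Lemma is the $+$ part of (b). This gives $h(\Gamma_3,\Gamma_2)\le \frac{R^2}{r^2}d$.

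\emph{Step 2.} Apply the time-reversed Lemma to the face-down towers $\Omega_1^-\subset\Omega_2^-$ with the fixed face-up tower $\Omega_1^+$. Its hypotheses are $B_r\subset\Omega_1^+\cap\Omega_1^-\subset\Omega_1^+\cap\Omega_2^-\subset B_R$ and the $-$ part of (b). This gives $h(\Gamma_1,\Gamma_3)\le\frac{R^2}{r^2}d$.

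Here $\Omega_1^+\cap\Omega_2^-\subset B_R$ is needed. It follows from $\Omega_1^+\subset\Omega_2^+$, since then $\Omega_1^+\cap\Omega_2^-\subset\Omega_2^+\cap\Omega_2^-\subset B_R$. Establishing $\Omega_1^\pm\subset\Omega_2^\pm$, or a bounded substitute for it, is therefore the main obstacle.

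Finally, one-way Hausdorff distance satisfies the triangle inequality $h(A,C)\le h(A,B)+h(B,C)$. Combining the two steps gives
\[
h(\Gamma_1,\Gamma_2)\le h(\Gamma_1,\Gamma_3)+h(\Gamma_3,\Gamma_2)\le \frac{2R^2}{r^2}d .
\]
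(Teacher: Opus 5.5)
Your two-step skeleton (two applications of the Lemma, then the triangle inequality for $h$) is the right one. The gap is the one you flag yourself, hypothesis (a) comparing only $K_i=\Omega_i^+\cap\Omega_i^-$, and neither workaround closes it, so Step~1 has no proof.

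\textbf{Why both workarounds fail.} The global inclusions $\Omega_1^\pm\subset\Omega_2^\pm$ are false in general. With $Q$ the square of the paper's example, take $\Omega_1^+=\Omega^+(M_1)$, $\Omega_2^+=\Omega^+(M)=\Omega^+(Q)$ and $\Omega_1^-=\Omega_2^-=\Omega^-(Q)$. Both intersections are the bipyramid over $Q$, and (a), (b) hold (e.g.\ $r=0.7$, $R=1.5$, $d=0.03$), yet $\Omega_1^+\supsetneq\Omega_2^+$. Inward uniqueness only identifies inner parts of caps, and the towers differ precisely in their outer parts.

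Your second route checks the right condition on the wrong set. In Step~1 the Lemma's inner submanifold is not $\Gamma_1$ but $\Gamma_3=Cap_1^+\cap Cap_2^-$. The Lemma's argument therefore needs every $x\in\Gamma_3\setminus\Gamma_2$ to lie in $\Omega_2^+$, and in $\overline{B_R}$, the only region where (b) says anything about $Cap_1^+$. Your check does rescue Step~2. There the relevant set is $\Gamma_1$, and $x\in\Omega_2^-$ follows from (a). Moreover, the Lemma's proof uses its hypothesis (a) and the bound $K\subset B_R$ only through $K_1\subset K$ and the boundedness of $K$.

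\textbf{A counterexample to the containment in Step~1.} Let $t=\delta_Q$ denote the cap of $Q$: the pyramid over $Q$, and minus the distance to $Q$ outside it. Keep $\Omega_1^+=\Omega^+(M_1)$, whose cap is $t=1-\max\{|x|,|y|\}$ wherever $\max\{|x|,|y|\}\le 2$. Keep $\Omega_1^-=\Omega^-(Q)$ and $\Omega_2^+=\Omega^+(Q)$, but lower the last tower to $\Omega_2^-=\Omega^-(Q-c\,e_t)=\{t>-\delta_Q-c\}$.

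Then $K_1$ is the bipyramid over $Q$, $K_1\subset K_2$, and (a), (b) hold with e.g.\ $r=0.7$, $R=2$, $d=0.2$, $c=0.1$. Now take $u=c/(1+\sqrt2)$. Over the corner point $(1+u,1+u)$, both $Cap_1^+$ and $Cap_2^-$ have height $-u$. So this point of $\Gamma_3$ lies strictly above $Cap_2^+$, whose height there is $-\sqrt2\,u$, and hence outside $\overline{\Omega_2^+}$. From this point, the future-directed null generator of $Cap_2^-$ that the Lemma follows never reaches $\overline{\Omega_2^+}$, so no point of $\Gamma_2$ is produced.

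\textbf{The missing idea.} The paper truncates first, setting $A^\pm=\Omega_1^\pm\cap\Omega_2^\pm$. These have the properties you need:
\begin{enumerate}
\item they are towers, with support functions $\max\{M_1^\pm,M_2^\pm\}$, and $A^\pm\subset\Omega_2^\pm$;
\item $A^+\cap A^-=K_1$, because $K_1\subset K_2$;
\item $\partial A^\pm\subset Cap_1^\pm\cup Cap_2^\pm$, so (b) transfers to $\partial A^\pm$;
\item $\Gamma_1\subset\partial A^+\cap\partial A^-$.
\end{enumerate}
Every intermediate intersection now lies in $K_2\subset B_R$: for instance $\Omega_2^+\cap A^-$ in the paper's order, or $A^+\cap\Omega_2^-$ in yours. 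With this substitution, your two applications of the Lemma and the triangle inequality go through.
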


\begin{proof}
Put $K_i=\Omega_i^+\cap\Omega_i^-$ and $\Gamma_i=Cap_i^+\cap Cap_i^-$ for $i=1,2$, and define
\[
A^\pm=\Omega_1^\pm\cap\Omega_2^\pm.
\]
These are towers with support functions $\max\{M_1^\pm,M_2^\pm\}$. Since $K_1\subset K_2$, we have
\[
A^+\cap A^-=K_1,
\qquad
B_r\subset K_1\subset\Omega_2^+\cap A^-\subset K_2\subset B_R.
\]
Let
\[
\Gamma_A=\partial A^+\cap\partial A^-,
\qquad
\Gamma'=Cap_2^+\cap\partial A^-.
\]
Both are convex-like submanifolds by Theorem~\ref{intersection of caps}. Moreover, $\Gamma_1\subset\Gamma_A$: indeed, every $x\in\Gamma_1$ lies in $\overline K_1\subset\overline K_2$ and on $Cap_1^\pm$, and hence on both $\partial A^+$ and $\partial A^-$.

As all the towers contain $B_r$, we also have
\[
\partial A^\pm\subset Cap_1^\pm\cup Cap_2^\pm.
\]
Thus assumption (b) implies
\[
h(\partial A^\pm\cap B_R,Cap_2^\pm)\leq d.
\]
Apply the preceding lemma first by enlarging $A^+$ to $\Omega_2^+$ while keeping $A^-$ fixed. Then apply its time-reversed version by enlarging $A^-$ to $\Omega_2^-$ while keeping $\Omega_2^+$ fixed. The inner and outer radii remain $r$ and $R$, so
\[
h(\Gamma_A,\Gamma')\leq\frac{R^2}{r^2}d,
\qquad
h(\Gamma',\Gamma_2)\leq\frac{R^2}{r^2}d.
\]
Since $\Gamma_1\subset\Gamma_A$, the triangle inequality for the one-way Hausdorff distance gives
\[
h(\Gamma_1,\Gamma_2)
\leq h(\Gamma_A,\Gamma')+h(\Gamma',\Gamma_2)
\leq\frac{2R^2}{r^2}d.
\]
\end{proof}

\begin{lemma}\label{enlarging a cap}
Fix a Lorentz coordinate system. Suppose two face-up towers $\Omega_2^+$ and $\Omega_1^+$ satisfy:

\begin{enumerate}
    \item $B_r \subset \Omega_2^+ $;
    \item $h(Cap_1^+, Cap_2^+) \leq d$, where $d>0$.
\end{enumerate}

Then $\Omega_1^+ \subset (1+d/r)\Omega_2^+$.
\end{lemma}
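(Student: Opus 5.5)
The plan is to compare the two towers through their support functions. The scaled tower $(1+d/r)\Omega_2^+$ is again a face-up tower, with support function $(1+d/r)M_2$, where $M_2$ is the support function of $\Omega_2^+$. So it suffices to show that every $x\in\Omega_1^+$ satisfies
\[
\langle x,L\rangle>(1+d/r)M_2(L)\qquad\text{for all }L\in\mathrm{Nullcone}^+.
\]

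First I extract a lower bound on $-M_2$ from hypothesis (1). For every $z\in B_r$ and every $L\in\mathrm{Nullcone}^+$ we have $\langle z,L\rangle>M_2(L)$. The Lorentzian pairing differs from the Euclidean one only by a sign on the last coordinate, so $\inf_{z\in B_r}\langle z,L\rangle=-r|L|_E$. This gives
\[
M_2(L)\le -r|L|_E,
\]
equivalently $\tfrac dr M_2(L)\le -d|L|_E$. Hence the target inequality will follow once I show
\[
\langle x,L\rangle> M_2(L)-d|L|_E\qquad\text{for } x\in\Omega_1^+.
\]

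Next I transfer the Hausdorff bound (2) into an inequality for points of $Cap_1^+$. Take $x'\in Cap_1^+$. By (2) there are points $y\in Cap_2^+$ with $|x'-y|_E$ arbitrarily close to $d$. Proposition~\ref{caps' support representation} gives $\langle y,L\rangle\ge M_2(L)$. The Cauchy--Schwarz-type bound $|\langle u,L\rangle|\le|u|_E|L|_E$ then yields $\langle x',L\rangle\ge M_2(L)-d|L|_E$, after a limiting argument to pass from distances close to $d$ to $d$ itself.

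Finally, fix $x\in\Omega_1^+$ and the future-pointing vertical vector $v=(0,\dots,0,1)$. By Proposition~\ref{timelike projection of caps} (and its proof), the line $x+tv$ meets $Cap_1^+$ at exactly one point $x'=x+sv$. Points below this cap point lie in the tower, and $x$ itself lies in the open tower, so $s>0$. For future null $L$ we have $\langle v,L\rangle<0$, hence
\[
\langle x,L\rangle=\langle x',L\rangle-s\langle v,L\rangle>\langle x',L\rangle\ge M_2(L)-d|L|_E\ge(1+d/r)M_2(L).
\]
This proves $x\in(1+d/r)\Omega_2^+$.

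I expect no serious obstacle. The only points needing care are two small verifications. The first is that scaling a tower about the origin scales its support function, which follows immediately from the definition. The second is the sign conventions: the strictness of the inequality comes from $s>0$, so it survives even though the step from (2) only gives a non-strict bound.
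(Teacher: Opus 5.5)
Your proof is correct. It is essentially the support-function (dual) form of the paper's argument, with one step filled in that the paper leaves implicit. The paper works purely with Minkowski sums: $\Omega_1^+\subset\Omega_2^+ + B_d\subset\Omega_2^+ + (d/r)\Omega_2^+=(1+d/r)\Omega_2^+$. Here the middle inclusion is $B_d=(d/r)B_r$, and the last equality uses convexity of $\Omega_2^+$.

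In your version, the bound $M_2(L)\le -r|L|_E$ plays the role of $B_d\subset(d/r)\Omega_2^+$. Adding it to $\langle x',L\rangle\ge M_2(L)-d|L|_E$ replaces the identity $A+tA=(1+t)A$, because the support function of a dilated tower scales linearly.

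The paper asserts the first inclusion $\Omega_1^+\subset\Omega_2^+ + B_d$ without comment, even though hypothesis (2) controls only the caps, not the towers. Your vertical push from $x\in\Omega_1^+$ to the unique point $x'=x+sv\in Cap_1^+$ is exactly the justification that step needs. The strict inequality then comes from $s>0$. This also avoids the issue that $\mathrm{dist}(x',Cap_2^+)\le d$ need not be strictly less than $d$.

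Once that inclusion is granted, the paper's chain is shorter and needs no explicit support functions.
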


\begin{proof}
\begin{align*}
    \Omega_1^+ &\subset \Omega_2^+ + B_d\\
    &\subset \Omega_2^+ + (d/r)\Omega_2^+\\
    &= (1+d/r)\Omega_2^+.
\end{align*}
\end{proof}

\begin{theorem}\label{one-way distane control}

Fix a Lorentz coordinate system and positive numbers $d,r,R$, where $0<r<R$ and
\[
\frac{Rd}{r} + d \leq \frac{r^2}{2(r+d)}.
\]
Suppose two face-up towers $\Omega_1^+$, $\Omega_2^+$ and two face-down towers $\Omega_1^-$, $\Omega_2^-$ satisfy:

\[
\begin{aligned}
\text{(a)}\quad &B_r \subset \Omega_1^+ \cap \Omega_1^- \subset B_R
  \text{ and } B_r \subset \Omega_2^+ \cap \Omega_2^- \subset B_R;\\
\text{(b)}\quad &h(Cap_1^+, Cap_2^+) \leq d
  \text{ and } h(Cap_1^-, Cap_2^-) \leq d.
\end{aligned}
\]

Then
\[
h(Cap_1^+ \cap Cap_1^-, Cap_2^+\cap Cap_2^-)
\leq \frac{Rd}{r+d} + \frac{2R^2(r+d)^2}{r^4}\left(d + \frac{Rd}{r}\right).
\]
\end{theorem}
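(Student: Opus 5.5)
The structure of the bound suggests the plan. We reduce to Corollary~\ref{nesting distance control}, which needs a nested pair of tower intersections, by rescaling. Set $\lambda=1+d/r$ and consider the scaled towers $\lambda\Omega_2^\pm$. Scaling about the origin maps a face-up (face-down) tower to a face-up (face-down) tower, with support function $\lambda M_2^\pm$. By Lemma~\ref{enlarging a cap}, applied with $B_r\subset\Omega_2^\pm$ and $h(Cap_1^\pm,Cap_2^\pm)\le d$, we get $\Omega_1^\pm\subset\lambda\Omega_2^\pm$. Hence
\[
B_r\subset\Omega_1^+\cap\Omega_1^-\subset\lambda\Omega_2^+\cap\lambda\Omega_2^-\subset B_{\lambda R}.
\]
This is the nested configuration required by the Corollary, with inner radius $r$ and outer radius $R'=\lambda R=R(r+d)/r$.

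Next we verify hypothesis (b) of the Corollary: we need $h(Cap_1^\pm\cap B_{R'},\lambda Cap_2^\pm)\le d'$ with $d'\le r/2$. Take $x\in Cap_1^\pm\cap B_{R'}$. There is $y\in Cap_2^\pm$ with $|x-y|_E\le d$, so $|y|_E\le R'+d$. The point $\lambda y$ lies in $\lambda Cap_2^\pm$, and
\[
|x-\lambda y|_E\le d+(\lambda-1)|y|_E\le d+\frac{d}{r}(R'+d).
\]
This is roughly $d+Rd/r$ up to lower-order terms. I would aim for the cleaner bound $d'=d+Rd/r$, possibly by choosing $y$ more carefully or by absorbing the extra terms into the stated constant. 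The hypothesis $Rd/r+d\le r^2/(2(r+d))\le r/2$ then gives $d'\le r/2$. The Corollary yields
\[
h(\Gamma_1,\lambda\Gamma_2)\le \frac{2R'^2}{r^2}d'=\frac{2R^2(r+d)^2}{r^4}\Bigl(d+\frac{Rd}{r}\Bigr),
\]
which is exactly the second term in the statement. Here $\Gamma_1=Cap_1^+\cap Cap_1^-$, and $\lambda\Gamma_2$ is the intersection of the scaled caps.

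Finally, we pass from $\lambda\Gamma_2$ back to $\Gamma_2$. Every point of $\lambda\Gamma_2$ has the form $\lambda z$ with $z\in\Gamma_2\subset\overline B_R$, so its distance to $\Gamma_2$ is at most $(\lambda-1)R=Rd/r$. The stated first term, $Rd/(r+d)$, suggests shrinking instead: write $z=\lambda^{-1}(\lambda z)$ and use $|\lambda z|\le \lambda R$, which gives $(1-\lambda^{-1})\lambda R$. Alternatively, the authors scale with $\lambda^{-1}$ applied to $\Omega_1$ rather than $\lambda$ applied to $\Omega_2$. I would therefore redo the construction with $\Omega_1^\pm$ shrunk by $\mu=r/(r+d)$, i.e. $\mu\Omega_1^\pm\subset\Omega_2^\pm$. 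The outer radius is then $R$ (or $\mu R$), the cost of moving $\Gamma_1$ to $\mu\Gamma_1$ is $(1-\mu)R=Rd/(r+d)$, and the new inner radius becomes $\mu r$. That accounts for the factor $(r+d)^2/r^4=1/(\mu r)^2\cdot 1/r^2$ and for the condition $d'\le \mu r/2=r^2/(2(r+d))$, which matches the hypothesis exactly. So the committed plan is: shrink $\Omega_1$ by $\mu$; bound $h(\mu Cap_1^\pm\cap B_R,Cap_2^\pm)\le d+Rd/r$ by the triangle inequality; apply the Corollary with radii $\mu r,R$; then add $h(\Gamma_1,\mu\Gamma_1)\le Rd/(r+d)$ via the triangle inequality for $h$. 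The main obstacle is the careful bookkeeping of these one-way Hausdorff estimates, namely making $d'$ come out as exactly $d+Rd/r$ while controlling the truncation by $B_R$.
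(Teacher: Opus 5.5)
Your committed plan is correct and is essentially the paper's own proof. You shrink $\Omega_1^\pm$ by $\mu=r/(r+d)$ (via Lemma~\ref{enlarging a cap}), bound $h(\mu Cap_1^\pm\cap B_R,Cap_2^\pm)\le d+Rd/r$, and apply Corollary~\ref{nesting distance control} with inner radius $\mu r=r^2/(r+d)$ and outer radius $R$; you then add $h(\Gamma_1,\mu\Gamma_1)\le Rd/(r+d)$. One small slip: $(r+d)^2/r^4$ equals $1/(\mu r)^2$ itself, with no extra factor $1/r^2$, so your final constant $2R^2\delta/(\mu r)^2$ is the right one.
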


\begin{proof}
By Lemma~\ref{enlarging a cap}, applied to both pairs of towers (with time-orientation reversed for the face-down towers),
\[
\Omega_1^+ \cap \Omega_1^- \subset (1+d/r)(\Omega_2^+ \cap \Omega_2^-).
\]
Let
\[
s=\frac{r}{r+d},\qquad \rho=sr=\frac{r^2}{r+d},\qquad \delta=d+\frac{Rd}{r}.
\]
Then
\[
B_\rho\subset s(\Omega_1^+\cap\Omega_1^-)
\subset\Omega_2^+\cap\Omega_2^-\subset B_R.
\]

Let $\Gamma_1 = Cap_1^+ \cap Cap_1^-$ and $\Gamma_2 = Cap_2^+\cap Cap_2^-$. We have
\[
h(\Gamma_1, \Gamma_2) \leq h(\Gamma_1, s\Gamma_1) + h(s\Gamma_1,\Gamma_2) \leq \frac{Rd}{r+d} + h(s\Gamma_1,\Gamma_2).
\]

Moreover, for $x\in(sCap_1^\pm)\cap B_R$, the point $x/s$ lies in $Cap_1^\pm$ and

\[
|x/s-x|_E\leq(1/s-1)R=Rd/r.
\]
Thus
\begin{align*}
h((sCap_1^\pm)\cap B_R,Cap_2^\pm)
&\leq h((sCap_1^\pm)\cap B_R,Cap_1^\pm)+h(Cap_1^\pm,Cap_2^\pm)\\
&\leq\frac{Rd}{r}+d=\delta.
\end{align*}

Since $\delta\leq\rho/2$, Corollary~\ref{nesting distance control}, with inner radius $\rho$, gives
\[
h(s\Gamma_1,\Gamma_2)\leq\frac{2R^2}{\rho^2}\delta
=\frac{2R^2(r+d)^2}{r^4}\left(d+\frac{Rd}{r}\right).
\]
So
\[
h(\Gamma_1,\Gamma_2)
\leq\frac{Rd}{r+d}+h(s\Gamma_1,\Gamma_2)
\leq\frac{Rd}{r+d}+\frac{2R^2(r+d)^2}{r^4}\left(d+\frac{Rd}{r}\right).
\]
\end{proof}

\begin{theorem}\label{continuity of caps' intersetion}
Given two continuous homogeneous functions $M^+:\mathrm{Nullcone}^+ \rightarrow R$ and $M^-:\mathrm{Nullcone}^- \rightarrow R$ where $\Omega^+(M^+) \cap \Omega^-(M^-) \neq \emptyset$. Let $\Gamma = Cap^+(M^+) \cap Cap^-(M^-)$.

If a sequence of continuous homogeneous functions $\{M_k^+:\mathrm{Nullcone}^+ \rightarrow R\}$ uniformly converges to $M^+$ in any bounded region, and another sequence of continuous homogeneous functions $\{M_k^-:\mathrm{Nullcone}^- \rightarrow R\}$ uniformly converges to $M^-$ in any bounded region. Let $\Gamma_k= Cap^+(M_k^+) \cap Cap^-(M_k^-)$. Then
\[
H(\Gamma_k,\Gamma) \rightarrow 0.
\]
\end{theorem}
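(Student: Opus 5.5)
The plan is to reduce the statement to Theorem~\ref{one-way distane control}, applied once in each direction, after placing a small ball inside all the acausal interiors and a large ball around them.

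First fix a Lorentzian coordinate system. Since $\Omega=\Omega^+(M^+)\cap\Omega^-(M^-)$ is a nonempty open set, after a translation we may assume $B_{2r}\subset\Omega$ for some $r>0$. By Proposition~\ref{tower and timecone}, $\Omega$ is bounded, so $\overline\Omega\subset B_{R/2}$ for some $R$.

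\textbf{Step 1: stability of these inclusions.} We show that for $k$ large, $B_r\subset\Omega_k:=\Omega^+(M_k^+)\cap\Omega^-(M_k^-)\subset B_R$. Let $S^\pm$ be the compact Euclidean-unit sections of $\mathrm{Nullcone}^\pm$. Homogeneity reduces everything to $S^\pm$, where $\|M_k^\pm-M^\pm\|_\infty=:\varepsilon_k\to0$. For the inner ball, $x\in B_{2r}$ gives $\langle x,L\rangle>M^\pm(L)$ on $S^\pm$. By compactness of $\overline B_r\times S^\pm$ there is a uniform margin $\eta>0$ with $\langle x,L\rangle\ge M^\pm(L)+\eta$, so $B_r\subset\Omega_k$ once $\varepsilon_k<\eta$. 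For the outer ball, $\Omega_k$ is contained in the region where $\langle x,L\rangle>M^\pm(L)-\varepsilon_k$. This region is the acausal interior for the shifted support functions $M^\pm-\varepsilon_k$. Shifting by a constant on $S^\pm$ only translates the towers by a small vector; indeed $\Omega^\pm(M^\pm-\varepsilon)=\Omega^\pm(M^\pm)+\varepsilon c^\mp$, with $c^\mp$ a fixed causal vector satisfying $\langle c^\mp,L\rangle=-1$ on $S^\pm$. Hence this intersection is contained in a small neighborhood of $\overline\Omega$, and so in $B_R$.

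\textbf{Step 2: Hausdorff distance of caps.} We show $H(Cap^\pm(M_k^\pm),Cap^\pm(M^\pm))\to0$, uniformly on the whole cap. By Proposition~\ref{timelike projection of caps}, with $v=(0,\dots,0,1)$, each cap is the graph $y\mapsto y+f(y)v$ over $v^\perp$, where
\[
f(y)=\min_{L\in S_v}\bigl(\langle y,L\rangle-M(L)\bigr).
\]
Hence $|f_k-f|\le\sup_{S_v}|M_k-M|\to0$ uniformly in $y$. Vertical distance bounds Euclidean distance, so both one-way distances are at most $\sup_{S_v}|M_k^\pm-M^\pm|=:d_k\to0$.

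\textbf{Step 3: apply Theorem~\ref{one-way distane control}.} Apply it with $(\Omega_1,\Omega_2)=(\Omega_k,\Omega)$ and again with the roles swapped. The hypothesis $\frac{Rd}{r}+d\le\frac{r^2}{2(r+d)}$ holds for $d=d_k$ small. The right-hand side of the theorem's bound is $O(d_k)$, so both $h(\Gamma_k,\Gamma)$ and $h(\Gamma,\Gamma_k)$ tend to $0$, which gives $H(\Gamma_k,\Gamma)\to0$.

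\textbf{Main obstacle.} The step I expect to be most delicate is the outer bound in Step 1, namely that $\Omega_k$ stays uniformly bounded. This relies on the translation identity for constant shifts of the support functions. If that identity is awkward to state cleanly, I would fall back on a direct argument. Every point of $\overline{\Omega_k}$ lies above the lower null supports and below the upper ones up to $\varepsilon_k$. Compactness of $\{x:\langle x,L\rangle\ge M^\pm(L)-1,\ L\in S^\pm\}$ then gives a uniform bound, and the same argument in the limit confines $\Omega_k$ to a shrinking neighborhood of $\overline\Omega$.
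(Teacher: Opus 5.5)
Your proposal is correct and follows essentially the same route as the paper: place $B_{2r}\subset\Omega\subset B_{R/2}$, bound the Hausdorff distance between corresponding caps by $O(d_k)$ through vertical displacement, then apply Theorem~\ref{one-way distane control} in both directions. The paper handles the outer bound of Step~1, which you flagged as the main obstacle, more directly. It applies Lemma~\ref{enlarging a cap} to both pairs of towers with inner radius $2r$, giving $\Omega_k\subset(1+d_k/(2r))\Omega\subset B_R$ without your translation or compactness detour. Also note that Theorem~\ref{one-way distane control} requires $d>0$: the paper treats $d_k=0$ separately, and you could instead replace $d_k$ by any small positive upper bound.
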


\begin{proof}
Fix a coordinate and let $e_t = (0,...,0,1)$. We define

\[\frac{d_k}{\sqrt{2}} = \sup \{\sup_{L^+ \in \mathrm{Nullcone}^+} \frac{|M^+(L^+)-M_k^+(L^+)|}{|L^+|_E},\sup_{L^- \in \mathrm{Nullcone}^-} \frac{|M^-(L^-)-M_k^-(L^-)|}{|L^-|_E}\}.\]

Then $d_k\to0$ by homogeneity and uniform convergence on the Euclidean-unit null directions.

For $x\in Cap^+(M_k^+)$, the definition of $d_k$ gives $x+d_ke_t\notin\Omega^+(M^+)$ and $x-d_ke_t\in\overline{\Omega^+(M^+)}$. The segment joining these two points therefore meets $Cap^+(M^+)$, so $\mathrm{dist}(x,Cap^+(M^+))\leq d_k$ and we conclude
\[
h(Cap^+(M_k^+),Cap^+(M^+))\leq d_k.
\]

Similarly we deduce
\[
H(Cap^+(M^+),Cap^+(M_k^+))\leq d_k
\]
and
\[
H(Cap^-(M^-),Cap^-(M_k^-))\leq d_k.
\]

After a translation, choose $r,R>0$ such that
\[
B_{2r}\subset\Omega\subset B_{R/2},
\qquad \Omega=\Omega^+(M^+)\cap\Omega^-(M^-),
\]
and write $\Omega_k=\Omega^+(M_k^+)\cap\Omega^-(M_k^-)$. For all sufficiently large $k$, we have $d_k<r$. Since $B_{2r}\subset\Omega$, the definition of $d_k$ gives, for either time-orientation,
\[
M_k^\pm(L)\leq M^\pm(L)+\frac{d_k}{\sqrt{2}}|L|_E
\leq-\left(2r-\frac{d_k}{\sqrt{2}}\right)|L|_E<-r|L|_E,
\]
and hence $B_r\subset\Omega_k$. Applying Lemma~\ref{enlarging a cap} to both pairs of towers, with inner radius $2r$, we also obtain
\[
\Omega_k\subset\left(1+\frac{d_k}{2r}\right)\Omega
\subset B_{(1+d_k/(2r))R/2}\subset B_R.
\]
Thus both $\Omega$ and $\Omega_k$ contain $B_r$ and are contained in $B_R$. In particular, by Theorem~\ref{intersection of caps}, $\Gamma_k$ is a convex-like submanifold for all sufficiently large $k$.

If $d_k=0$, then $\Gamma_k=\Gamma$. Otherwise, applying Theorem~\ref{one-way distane control} in both directions, whenever
\[
\frac{Rd_k}{r}+d_k\leq\frac{r^2}{2(r+d_k)},
\]
we obtain
\[
H(\Gamma_k,\Gamma)\leq\frac{Rd_k}{r+d_k}
+\frac{2R^2(r+d_k)^2}{r^4}\left(d_k+\frac{Rd_k}{r}\right).
\]
Since $d_k\to0$, this condition holds for all sufficiently large $k$, and the right-hand side tends to zero. Hence $H(\Gamma_k,\Gamma)\to0$.
\end{proof}

As an application of volume comparison theorem, we can prove that the volume function is continuous in the topology induced by Hausdorff distance.

\begin{lemma}\label{enlarging submanifolds}

If $\Gamma$, $\Gamma_1$ are convex-like manifolds, with $h(\Gamma_1, \Gamma) \leq d$ and $B_r \subset \Omega(\Gamma)$. Then
\[
(1+\frac{d}{r})^{n-1}V(\Gamma) \geq V(\Gamma_1).
\]
\end{lemma}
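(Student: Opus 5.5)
The plan is to show that $\Gamma_1$ nests in the dilated submanifold $(1+d/r)\Gamma$, and then combine the volume comparison Theorem~\ref{volume comparison} with the scaling behaviour of volume. I will use the origin as the center of the dilation: it is the center of $B_r$, so $B_r\subset\Omega(\Gamma)$ puts it in the acausal interior of $\Gamma$.

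\textbf{Step 1: reduce to a tower inclusion.} The map $x\mapsto\lambda x$ with $\lambda=1+d/r>0$ is a homothety, so it commutes with causal projections up to scaling. Hence $\lambda\Gamma$ is again convex-like, with $\Omega^\pm(\lambda\Gamma)=\lambda\,\Omega^\pm(\Gamma)$ and $\Omega(\lambda\Gamma)=\lambda\Omega(\Gamma)$. By the characterization of nesting through acausal interiors, it therefore suffices to prove
\[
\Omega(\Gamma_1)\subseteq\lambda\,\Omega(\Gamma).
\]

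\textbf{Step 2: prove the tower inclusion.} I will show $\overline{\Omega(\Gamma_1)}\subseteq\overline{\Omega(\Gamma)}+\overline B_d$ and then copy the computation of Lemma~\ref{enlarging a cap}. Every point of $\Gamma_1$ lies within Euclidean distance $d$ of $\Gamma$, so
\[
\Gamma_1\subseteq\Gamma+\overline B_d\subseteq\overline{\Omega(\Gamma)}+\overline B_d.
\]
The set $\overline{\Omega(\Gamma)}+\overline B_d$ is convex, and $B_d\subseteq (d/r)B_r\subseteq (d/r)\Omega(\Gamma)$. Using convexity of $\Omega(\Gamma)$ and the fact that the origin lies in it,
\[
\overline{\Omega(\Gamma)}+\overline B_d\subseteq\overline{\Omega(\Gamma)}+\tfrac dr\overline{\Omega(\Gamma)}=\lambda\overline{\Omega(\Gamma)}=\overline{\Omega(\lambda\Gamma)}.
\]
So every point of $\Gamma_1$ lies in $\overline{\Omega(\lambda\Gamma)}$, i.e. nests in $\lambda\Gamma$. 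By the proposition equating nesting with inclusion of acausal interiors, this gives $\Omega(\Gamma_1)\subseteq\Omega(\lambda\Gamma)$, which is the required inclusion.

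\textbf{Step 3: apply volume comparison and scaling.} Theorem~\ref{volume comparison}, in the nesting form (equivalently Theorem~\ref{(half) volume comparison} together with Proposition~\ref{insert a convex-like submanifold}), then gives
\[
V(\Gamma_1)\le V(\lambda\Gamma).
\]
It remains to check $V(\lambda\Gamma)=\lambda^{n-1}V(\Gamma)$. Let $f:\Gamma\to S^{n-1}$ be a Lipschitz homeomorphism. Then $g=f\circ(\lambda^{-1}\cdot)$ is a Lipschitz homeomorphism from $\lambda\Gamma$ to $S^{n-1}$, and $g^{-1}=\lambda f^{-1}$. Its Lorentzian Jacobian is $\lambda^{n-1}J_{n-1}f^{-1}$ at every point where $f^{-1}$ is differentiable. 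Since the volume is independent of the chosen homeomorphism, $V(\lambda\Gamma)=\lambda^{n-1}V(\Gamma)$, and the lemma follows.

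\textbf{Main obstacle.} The delicate point is the passage in Step 2 from pointwise Euclidean closeness of the submanifolds to the Lorentzian nesting relation. What makes it work is that $\overline{\Omega(\Gamma)}$ is a convex set containing $\Gamma$, so a Euclidean $d$-neighbourhood can be absorbed into a dilation. The hypothesis $B_r\subset\Omega(\Gamma)$ is needed exactly there, both for the inclusion $B_d\subseteq(d/r)\Omega(\Gamma)$ and for the origin to be an interior point at which the homothety is centered.
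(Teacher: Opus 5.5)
Your proposal is correct and follows the paper's proof. You absorb the Euclidean $d$-neighbourhood of $\Gamma$ into the dilation $(1+d/r)\Omega(\Gamma)$ using convexity and $B_r\subset\Omega(\Gamma)$, deduce that $\Gamma_1$ nests in the dilated submanifold, and conclude with volume comparison and the $(n-1)$-homogeneity of volume. The only difference is cosmetic: you work with closures and $\overline{B}_d$ directly, which is legitimate since nesting allows contact and Theorem~\ref{(half) volume comparison} with Proposition~\ref{insert a convex-like submanifold} requires only nesting. The paper instead inserts a slack $\eta>0$ to land in the open set $\Omega(\Gamma)+B_{d+\eta}$ and then lets $\eta\downarrow 0$.
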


\begin{proof}
Fix $\eta>0$. Since $\Gamma\subset\overline{\Omega(\Gamma)}$ and $h(\Gamma_1,\Gamma)\leq d$, we have
\begin{align*}
\Gamma_1
&\subset \Omega(\Gamma)+B_{d+\eta}\\
&\subset \Omega(\Gamma)+\frac{d+\eta}{r}\Omega(\Gamma)\\
&=\left(1+\frac{d+\eta}{r}\right)\Omega(\Gamma).
\end{align*}
Hence $\Gamma_1$ nests in $\left(1+\frac{d+\eta}{r}\right)\Gamma$. By Theorem~\ref{volume comparison} and the scaling of $(n-1)$-dimensional volume,
\[
V(\Gamma_1)\leq\left(1+\frac{d+\eta}{r}\right)^{n-1}V(\Gamma).
\]
Letting $\eta\downarrow0$ proves the result.
\end{proof}

\begin{theorem}
For convex-like submanifolds $\Gamma$ and $\Gamma_k$, if $\Gamma_k \to \Gamma$ then
\[
V(\Gamma_k) \to V(\Gamma).
\]
\end{theorem}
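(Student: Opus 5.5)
The plan is to squeeze $V(\Gamma_k)$ between $V(\Gamma)$ from both sides using Lemma~\ref{enlarging submanifolds} twice. Here $\Gamma_k\to\Gamma$ means $H(\Gamma_k,\Gamma)\to0$; set $d_k=H(\Gamma_k,\Gamma)$.

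For the upper bound, translate coordinates so that some ball $B_{2r}$ lies in $\Omega(\Gamma)$; this is possible because $\Omega(\Gamma)$ is a nonempty open set by Proposition~\ref{nonempty corresponding towers}. Since $h(\Gamma_k,\Gamma)\le d_k$, Lemma~\ref{enlarging submanifolds} gives
\[
V(\Gamma_k)\le\Bigl(1+\tfrac{d_k}{2r}\Bigr)^{n-1}V(\Gamma),
\]
and therefore $\limsup_k V(\Gamma_k)\le V(\Gamma)$.

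For the lower bound, apply the same lemma with the roles of $\Gamma$ and $\Gamma_k$ exchanged:
\[
V(\Gamma)\le\Bigl(1+\tfrac{d_k}{r}\Bigr)^{n-1}V(\Gamma_k).
\]
This requires $B_r\subset\Omega(\Gamma_k)$ for all large $k$, with $r$ independent of $k$. This uniform inner-ball statement is the main obstacle, and I would prove it as follows.

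Write $M^\pm$ and $M_k^\pm$ for the support functions of $\Gamma$ and $\Gamma_k$. For a Euclidean-unit null vector $L$, the functional $\langle\cdot,L\rangle$ is Lipschitz in the Euclidean metric with constant at most $\sqrt2$. Hence
\[
|M_k^\pm(L)-M^\pm(L)|\le\sqrt2\,d_k ,
\]
since every point of $\Gamma_k$ lies within $d_k$ of $\Gamma$ and vice versa. Now $B_{2r}\subset\Omega(\Gamma)$ means $M^\pm(L)\le\langle z,L\rangle$ for all $z\in B_{2r}$. From the definition of the towers this gives
\[
M^\pm(L)\le\inf_{z\in B_{2r}}\langle z,L\rangle=-2r|L|_L',
\]
where $|L|_L'$ denotes the Euclidean norm of the Lorentzian dual covector. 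In the standard coordinates this is just $-2r$ for Euclidean-unit $L$, up to a fixed constant. Consequently, for $d_k$ small compared with $r$, we get $M_k^\pm(L)<\inf_{z\in B_r}\langle z,L\rangle$ for all null $L$. This says exactly that $B_r\subset\Omega^+(\Gamma_k)\cap\Omega^-(\Gamma_k)=\Omega(\Gamma_k)$, by the same computation as in the proof of Theorem~\ref{continuity of caps' intersetion}.

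With the uniform inner ball in hand, the lower bound gives $\liminf_k V(\Gamma_k)\ge V(\Gamma)$, because $(1+d_k/r)^{-(n-1)}\to1$. Combining this with the upper bound yields $V(\Gamma_k)\to V(\Gamma)$.
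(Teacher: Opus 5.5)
Your proof is correct and follows the paper's argument: you first get a uniform inner ball $B_r\subset\Omega(\Gamma_k)$ from the estimate on support functions, $|M_k^\pm(L)-M^\pm(L)|\le C\,H(\Gamma_k,\Gamma)$ on Euclidean-unit null vectors, and then apply Lemma~\ref{enlarging submanifolds} in both directions to squeeze $V(\Gamma_k)$. One minor point: for Euclidean-unit null $L$, the functional $\langle\cdot,L\rangle$ is exactly $1$-Lipschitz and $\inf_{z\in B_{2r}}\langle z,L\rangle=-2r$, because flipping the sign of the time component preserves the Euclidean norm, so your hedges (the constant $\sqrt2$ and ``up to a fixed constant'') are unnecessary, though harmless.
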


\begin{proof}
Let $\varepsilon_k=H(\Gamma_k,\Gamma)\to0$. After a translation, choose $r>0$ such that $B_{2r}\subset\Omega(\Gamma)$.

For a future- or past-pointing null vector $L$ with $|L|_E=1$, write
\[
M_k(L)=\inf_{p\in\Gamma_k}\langle p,L\rangle,
\qquad M(L)=\inf_{p\in\Gamma}\langle p,L\rangle.
\]
The linear function $p\mapsto\langle p,L\rangle$ is $1$-Lipschitz with respect to the Euclidean distance. Hence
\[
|M_k(L)-M(L)|\leq\varepsilon_k.
\]
Since $B_{2r}\subset\Omega(\Gamma)$, we have $M(L)\leq-2r$. For all sufficiently large $k$, $\varepsilon_k<r$, so for every $x\in B_r$,
\[
M_k(L)\leq-2r+\varepsilon_k<-r\leq\langle x,L\rangle.
\]
By homogeneity and the definitions of the two towers, this gives
\[
B_r\subset\Omega(\Gamma_k)\cap\Omega(\Gamma).
\]

Apply Lemma~\ref{enlarging submanifolds} first to $(\Gamma_1,\Gamma)=(\Gamma_k,\Gamma)$ and then with the roles of $\Gamma$ and $\Gamma_k$ interchanged. We obtain
\[
\left(1+\frac{\varepsilon_k}{r}\right)^{-(n-1)}V(\Gamma)
\leq V(\Gamma_k)
\leq\left(1+\frac{\varepsilon_k}{r}\right)^{n-1}V(\Gamma).
\]
Letting $k\to\infty$ proves $V(\Gamma_k)\to V(\Gamma)$.
\end{proof}

\section{Projection volume ratio}

Due to the negative metric along the timelike direction, a timelike orthogonal projection transforms every connected, smooth closed spacelike submanifold of codimension two into an immersed hypersurface with no less volume. In this section, the volume of an immersed projection is counted with multiplicity. If we fix one of such submanifolds $\Gamma$, then each timelike orthogonal projection $P_v$ gives a ratio
\[
\frac{V^{n-1}(P_v(\Gamma))}{V^{n-1}(\Gamma)} \geq 1.
\]
This ratio could be arbitrarily large (when $v$ is sufficiently close to a null vector); however we can consider its minimum (among all timelike orthogonal projections), and call
{\postdisplaypenalty=10000
\[
\xi(\Gamma) =\min_{\langle v,v\rangle < 0}\frac{V^{n-1}(P_v(\Gamma))}{V^{n-1}(\Gamma)}
\]
the projection volume ratio of $\Gamma$.}

To justify the minimum, normalize the projection direction to a future-pointing unit timelike vector $u$. If $u^{T\Gamma}$ denotes its tangential component, the Gram matrix of $P_u$ on an orthonormal tangent frame gives
\[
J(P_u|_\Gamma)=\sqrt{1+|u^{T\Gamma}|_L^2},\qquad
V^{n-1}(P_u(\Gamma))=\int_\Gamma\sqrt{1+|u^{T\Gamma}|_L^2}\,dV.
\]
This is a continuous function of $u$. If $|u_k|_E\to\infty$, pass to a subsequence such that $u_k/|u_k|_E\to L$, where $L$ is a nonzero future-pointing null vector. Then
\[
\frac{V^{n-1}(P_{u_k}(\Gamma))}{|u_k|_E}
\longrightarrow\int_\Gamma|L^{T\Gamma}|_L\,dV>0.
\]
Indeed, if the integral vanished, the function $x\mapsto\langle x,L\rangle$ would be constant on the connected manifold $\Gamma$, so $\Gamma$ would lie in a null hyperplane. Projection of that hyperplane along its null direction onto an $(n-1)$-dimensional spacelike subspace would restrict to a local diffeomorphism on $\Gamma$. Its image would be both open and compact, which is impossible. Thus the projection volume tends to infinity as $u$ leaves every compact subset of the future unit timelike hyperboloid, and its minimum is attained.

The same Jacobian formula shows that $\xi(\Gamma) \geq 1$, with equality if and only if a minimizing $u$ is orthogonal to every tangent space of $\Gamma$. By connectedness, this is equivalent to $\Gamma$ lying in a spacelike hyperplane. So we may consider $\xi$ as a function that describes how close a submanifold is to being on a hyperplane.

In general, $\xi$ can take arbitrarily large values. For example, consider a closed spacelike curve $r$ in $\mathbb{R}^3_1$:
\[
r(\theta) = (\cos \theta, \sin \theta , h(\theta))
\]
with $h:S^1 \to \mathbb{R}$, $|h'|<1$ and $|h| \leq 1/4$.

Its length
\[
V^1(r) = \int_{S^1}\sqrt{1-(h'(\theta))^2}d\theta
\]
could be arbitrarily close to $0$ (by properly choosing $h$). However, for any timelike orthogonal projection $P_v$, the curve $P_v(r)$ surrounds the projected disk
\[
P_v(D),\qquad D=\{(x,y,0)\mid x^2+y^2<1/4\},
\]
with winding number $1$. To see this, set $r_s(\theta)=(\cos\theta,\sin\theta,s h(\theta))$, $0\leq s\leq1$. For every $z\in D$, the vector $r_s(\theta)-z$ is spacelike: its horizontal length is greater than $1/2$, whereas its time component has absolute value at most $1/4$. Hence $P_v(r_s(\theta))\neq P_v(z)$. The homotopy $P_v(r_s)$ therefore preserves the winding number around every point of $P_v(D)$, and at $s=0$ that winding number is $1$, with the orientation induced by $P_v|_{\{z=0\}}$. Moreover, $P_v$ does not decrease lengths on the horizontal plane, so $P_v(D)$ contains the radius-$1/2$ disk centered at the origin in $v^\perp$. Integrating the absolute angular variation around this disk gives
\[
V^1(P_v(r))\geq\frac12\,2\pi=\pi.
\]

However, for smooth convex-like submanifolds, we have the following:

\begin{theorem}
Let $\Gamma$ be a smooth convex-like submanifold in $\mathbb{R}_1^{n+1}$. Then
\[
\xi(\Gamma) \leq n^{n-1}.
\]
\end{theorem}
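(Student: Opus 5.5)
The plan is to combine Theorem~\ref{volume comparison} with a flat comparison submanifold. Fix a suitable timelike orthogonal projection $P=P_{v,H}$ onto a spacelike hyperplane $H$, and let $K=\mathrm{Conv}(P(\Gamma))\subset H$. Since $\Gamma$ is convex-like, $\partial K=P(\Gamma)$, and $V^{n-1}(P_v(\Gamma))=V^{n-1}(\partial K)$, with multiplicity one because $P|_\Gamma$ is injective. For a suitable centre $c\in\mathrm{int}_H K$, set
\[
\Gamma'=c+\tfrac1n(\partial K-c)\subset H .
\]
If $\Gamma'$ nests in $\Gamma$, then Theorem~\ref{volume comparison} gives $V^{n-1}(\Gamma)\ge V^{n-1}(\Gamma')$. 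Moreover $\Gamma'$ lies in a spacelike hyperplane, so its Lorentzian volume is its Euclidean volume in $H$, namely $n^{-(n-1)}V^{n-1}(\partial K)$. Hence
\[
\xi(\Gamma)\le \frac{V^{n-1}(P_v(\Gamma))}{V^{n-1}(\Gamma)}\le n^{n-1}.
\]
So the whole proof reduces to choosing $(v,H,c)$ so that the $\tfrac1n$-homothetic flat copy of $\partial K$ lies in $\overline{\Omega(\Gamma)}$.

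By the nesting characterization, $\Gamma'$ nests in $\Gamma$ if and only if for every null $L$ (future or past) and every $y\in\Gamma$,
\[
\Big\langle c+\tfrac1n\big(P(y)-c\big),L\Big\rangle\ \ge\ M(L)=\inf_{p\in\Gamma}\langle p,L\rangle .
\]
Write $P(y)=y-t(y)v$, where $t$ is the time-height of $\Gamma$ over $H$. The left side equals
\[
\Big(1-\tfrac1n\Big)\langle c,L\rangle+\tfrac1n\langle y,L\rangle-\tfrac{t(y)}n\langle v,L\rangle .
\]
Using $\langle y,L\rangle\ge M(L)$, it suffices to prove
\[
(n-1)\bigl(\langle c,L\rangle-M(L)\bigr)\ \ge\ t(y)\langle v,L\rangle
\]
for all $y\in\Gamma$ and all null $L$. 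This is a ``$1:(n-1)$'' balance inequality. It should follow from choosing $H$ and $c$ in a balanced position: $H$ should pass through $c$, and the height function $t$ over $H$ should be controlled, in the direction of each null $L$, by the distance of $c$ to the corresponding null support plane of $\Gamma$.

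For the choice itself, I would take $v$ to be a minimizer of the projection volume (its existence was justified before the statement) and $c$ the centroid of $K$ in $H$. From minimality, the first variation of $u\mapsto\int_\Gamma\sqrt{1+|u^{T\Gamma}|_L^2}\,dV$ over the unit hyperboloid vanishes. This gives a balancing identity for the heights of $\Gamma$ over $H$, in the same spirit as the classical fact that a convex body contains its $\tfrac1n$-homothetic copy about the centroid, and that fact is exactly where the constant $n$ should come from. Concretely, I would combine Grünbaum/centroid-type estimates for $K$ with the roof structure (Theorem~\ref{existence of roof}) and with the fact that the caps are graphs of $1$-Lipschitz height functions over $H$ (Proposition~\ref{timelike projection of caps}). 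Together these bound $t(y)\langle v,L\rangle$ by $n-1$ times the support gap $\langle c,L\rangle-M(L)$.

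The main obstacle is this last step: showing that a single choice of $(v,H,c)$ satisfies the balance inequality uniformly over all null directions $L$. If the first-variation route fails, I would instead choose $(v,H,c)$ by a max--min argument, maximizing the largest $\lambda$ for which $c+\lambda(\partial K-c)$ nests in $\Gamma$. I would then show at the optimum that $\lambda\ge\tfrac1n$, via a Helly/John-type contact-point argument on the null supports.
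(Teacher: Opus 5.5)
\textbf{There is a genuine gap: the central nesting step is never proved.} Your reduction is sound as far as it goes. If some flat $\Gamma'=c+\tfrac1n(\partial K-c)$ nests in $\Gamma$, then Theorem~\ref{volume comparison} gives $\xi(\Gamma)\le n^{n-1}$. But the existence of such $(v,H,c)$ is the whole content of the theorem, and you say yourself that it is the main obstacle.

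To see what is needed, take $v$ unit and $t$ the height over $H$. The inner parts of $Cap^\pm(\Gamma)$ are generated by null rays from $\Gamma$. Using this and $\Gamma'\subset K$, one checks that $\Gamma'$ nests in $\Gamma$ if and only if no point of $\Gamma'$ is timelike related to a point of $\Gamma$, i.e.
\[
|y-P(p)|_L\ge|t(p)|\qquad\text{for all } y\in\Gamma',\ p\in\Gamma .
\]
This is a pointwise constraint on the height function. Your proposed tools do not reach it:
\begin{itemize}
\item The first variation of $u\mapsto\int_\Gamma\sqrt{1+|u^{T\Gamma}|_L^2}\,dV$ at a minimizer gives only an averaged identity for the tangential components. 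You do not indicate how it would control $t$ pointwise, uniformly in $L$.
\item The centroid heuristic points the wrong way. For a positive ratio, $c+\lambda(K-c)\subset K$ holds for every $c\in K$ and $\lambda\in[0,1]$, so $\tfrac1n$ plays no role there. The nontrivial centroid theorem concerns the reflected copy $c-\tfrac1n(K-c)$.
\item Your sufficient inequality discards the term $\langle y,L\rangle-M(L)$, so it demands more than nesting requires.
\item The Helly/John fallback is not specified.
\end{itemize}

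\textbf{The missing idea is to build the comparison object from $\Gamma$ itself.} Then nesting is automatic, and the difficulty moves into a flat Euclidean containment. The paper argues as follows.
\begin{enumerate}
\item Choose $q_1,\dots,q_{n+1}\in\Gamma$ maximizing the $n$-volume of the spacelike simplex $S=\mathrm{Conv}(q_1,\dots,q_{n+1})$; this maximum is positive. Project orthogonally onto the hyperplane $\pi$ of $S$.
\item $\partial S$ nests in $\Gamma$ simply because $S\subset\mathrm{Conv}(\Gamma)\subset\overline{\Omega(\Gamma)}$.
\item Move $q_i$ along $\Gamma$. In normal coordinates $(h,\tau)$ of the opposite facet, the simplex volume is $\tfrac{B_i}{n}\sqrt{h^2-\tau^2}$. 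Maximality therefore gives $dh|_{T_{q_i}\Gamma}=0$. So the tangent hyperplane of the convex hypersurface $P(\Gamma)$ at $q_i$ is parallel to the facet opposite $q_i$.
\item Hence $K\subset\delta(S)$, where $\delta$ is the homothety of ratio $-n$ about the centroid $g$ of $S$.
\item Euclidean and then Lorentzian volume comparison give
\[
V^{n-1}(P(\Gamma))\le V^{n-1}(\partial\delta(S))=n^{n-1}V^{n-1}(\partial S)\le n^{n-1}V^{n-1}(\Gamma).
\]
\end{enumerate}

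In your own framework, this choice of $(\pi,g)$ makes the reduction work with ratio $-\tfrac1n$ instead of $+\tfrac1n$. Indeed $g-\tfrac1n(\partial K-g)=\delta^{-1}(\partial K)$ lies in $S$, so it nests in $\Gamma$, and its volume is $n^{-(n-1)}V^{n-1}(\partial K)$. With ratio $+\tfrac1n$ you have no comparable containment.
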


\begin{proof}
For $p_1,...,p_{n+1} \in \Gamma$, the convex hull $\mathrm{Conv}(p_1,..,p_{n+1})$ has dimension at most $n$. So we can define
\begin{equation*}
A(p_1,...,p_{n+1}) =
\left\{\begin{aligned}
&\text{the $n$-volume of }\mathrm{Conv}(p_1,..,p_{n+1}),\\
&\text{\quad if the hull is spacelike and $n$-dimensional;}\\
&0,\quad\text{otherwise.}
\end{aligned}
\right.
\end{equation*}

In fact, there exist $n+1$ points $p_1,...,p_{n+1}\in \Gamma$ such that $\mathrm{span}\{p_2-p_1,...,p_{n+1}-p_1\}$ is a spacelike $n$-dimensional subspace.

To prove this, choose $p_1,p_{n+1}\in\Gamma$, such that $|p_1-p_{n+1}|_L$ attains its maximum. Then $p_1-p_{n+1}$ is spacelike.

By Theorem \ref{uniformly spacelike}, $T_{p_1}\Gamma$ is spacelike. By maximality of $|p_1-p_{n+1}|_L$, $p_1-p_{n+1}$ is perpendicular to $T_{p_1}\Gamma$, so $\mathrm{span}\{p_1-p_{n+1},T_{p_1}\Gamma\}$ is spacelike. Choosing $p_2,...,p_{n}$ close to $p_1$ with secant directions approaching a basis of $T_{p_1}\Gamma$ provides $\mathrm{span}\{p_2-p_1,...,p_{n+1}-p_1\}$ to be spacelike. This shows that $\sup A>0$.

$A$ is continuous. By compactness, there exist $n+1$ points $q_1,...,q_{n+1}$ where the positive maximum of $A$ is reached, and these points determine a unique spacelike hyperplane. Without loss of generality suppose $X^{n+1}(q_i) = 0$, $1\leq i\leq n+1$.

We take the orthogonal projection $P$ along $v = (0,...,0,1)$ to $\pi = \{x\mid X^{n+1}(x)=0\}$ and consider $\Gamma' = P(\Gamma) \subset \pi$ with $q_1,...,q_{n+1} \in \Gamma'$. Let $H = \mathrm{Conv}(q_1,...,q_{n+1})$ be a convex body in $\pi$ (so $\partial H$ is convex-like), and we want to show that
\[
V^{n-1}(\Gamma') \leq n^{n-1}V^{n-1}(\partial H).
\]

Let $F_i$ be the $(n-1)$-dimensional hyperplane in $\pi$ such that $q_j\in F_i$ for all $j\neq i$. Let $F'_i$ be the hyperplane parallel to $F_i$ through $q_i$. Let $H_i$ be the closed halfspace in $\pi$ with $\partial H_i = F_i$ and $q_i \in H_i$, and let $H'_i$ be the closed halfspace in $\pi$ with $\partial H'_i = F'_i$ and $q_j \in H'_i$ for all $j\neq i$.

Let $\delta$ be the homothety centered at the centroid of $\{q_1,...,q_{n+1}\}$ with ratio $-n$. Clearly $\delta(F_i) = F'_i$ and $\delta(H_i) = H'_i$. Hence
\[
H' =\bigcap_{1\leq i\leq n+1}H'_i=\delta(H)
\]
is a closed convex body $n$ times as large as $H = \bigcap_{1\leq i\leq n+1}H_i$.

To use volume maximality, fix the facet opposite $q_i$ and denote its $(n-1)$-volume by $B_i$. Write the normal coordinates of a nearby moving vertex as $(h,\tau)$, where $h$ is signed distance from $F_i$ within $\pi$, chosen positive at $q_i$, and $\tau=X^{n+1}$. The simplex volume near $q_i$ is
\[
\frac{B_i}{n}\sqrt{h^2-\tau^2}.
\]
At $q_i$ we have $\tau=0$ and $h>0$, so vanishing of its first variation along $\Gamma$ gives $dh|_{T_{q_i}\Gamma}=0$. In particular,
\[
T_{q_{n+1}}\Gamma\subset
\mathrm{span}\{q_2-q_1,...,q_n-q_1,(0,...,0,1)\},
\]
and
\[
T_{q_{n+1}}\Gamma'=\mathrm{span}\{q_2-q_1,...,q_n-q_1\}
\]
is parallel to $F_{n+1}$. The same argument applies to every vertex. Thus the tangent hyperplane of the convex hypersurface $\Gamma'\subset\pi$ at $q_i$ is exactly $F'_i$. Since the other vertices lie in $H'_i$, convexity gives $\Gamma'\subset H'_i$ for every $i$, and hence $\Gamma'\subset H'$. By Euclidean volume comparison,
\[
V^{n-1}(\Gamma') \leq V^{n-1}(\partial H') = n^{n-1}V^{n-1}(\partial H).
\]

Notice that
\[
\partial H\subseteq H=\mathrm{Conv}(q_1,...,q_{n+1})
\subseteq\mathrm{Conv}(\Gamma)\subseteq\overline{\Omega(\Gamma)}.
\]
Thus $\partial H$ nests in $\Gamma$, and the volume comparison theorem applies. Then we conclude
\[
V^{n-1}(\Gamma) \leq V^{n-1}(\Gamma') \leq n^{n-1}V^{n-1}(\partial H) \leq n^{n-1}V^{n-1}(\Gamma).
\]
\end{proof}

\appendix

\section{Lipschitz Mappings}

We collect some classic theorems about Lipschitz functions and mappings.

Let $\mathbb{R}^n$ be the $n$-dimensional real linear space endowed with the metric function $d_{R^n}(x,y) = |x-y|_E$, and $S^n \subset R^{n+1}$ be the unit sphere in $\mathbb{R}^{n+1}$ with the metric function $d_{S^n}(x,y) = d_{R^{n+1}}(x,y)$. Notice that $d_{S^n}$ is equivalent to the intrinsic distance with respect to its inherited Riemann metric.

\begin{definition}
Let $X,Y$ be metric spaces and $d_X,d_Y$ be their metric function. We say $f:X\to Y$ is Lipschitz if there exists $C\in R$ such that
\[
d_Y(f(x_1),f(x_2)) \leq C d_X(x_1,x_2),\,\forall x_1,x_2\in X.
\]

We define the Lipschitz constant of $f$ by:
\[
\mathrm{Lip}(f) = \sup_{x_1\neq x_2} \frac{d_Y(f(x_1),f(x_2))}{d_X(x_1,x_2)}.
\]
\end{definition}

\begin{lemma}\label{central projection lip}
Let $a\in \mathbb{R}^{n+1}$, $|a|< 1$ and let $r>0$. Then the central projection
\[P(x) = \{a+t(x-a)|t>0\}\cap S^n
\]
from $\mathbb{R}^{n+1}\backslash B(a,r)$ to $S^n$ is Lipschitz.
\end{lemma}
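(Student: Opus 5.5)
The proof will be a direct estimate. Write $P(x)=a+\frac{x-a}{|x-a|}$ for $x\in\mathbb{R}^{n+1}\setminus B(a,r)$; the defining ray meets $S^n$ at exactly one point, but the natural normalization uses the sphere centered at $a$. So I will factor $P=Q\circ N$, where
\[
N(x)=\frac{x-a}{|x-a|}
\]
is radial normalization onto the unit sphere centered at $0$ (a direction). Here $Q:S^n\to S^n$ sends a unit direction $u$ to the unique point $a+t(u)u$ with $t(u)>0$ and $|a+t(u)u|=1$. Lipschitz continuity of $P$ then follows from Lipschitz continuity of the two factors.

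\textbf{Step 1: $N$ is Lipschitz with constant $2/r$.} For $x,y$ outside $B(a,r)$, put $X=x-a$ and $Y=y-a$, so $|X|,|Y|\geq r$. The standard estimate gives
\[
\Bigl|\frac{X}{|X|}-\frac{Y}{|Y|}\Bigr|
\leq\frac{|X-Y|}{|X|}+|Y|\Bigl|\frac1{|X|}-\frac1{|Y|}\Bigr|
\leq\frac{2|X-Y|}{|X|}\leq\frac{2}{r}|x-y|.
\]

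\textbf{Step 2: $Q$ is Lipschitz.} Solving $|a+tu|^2=1$ for the positive root gives
\[
t(u)=-\langle a,u\rangle+\sqrt{\langle a,u\rangle^2+1-|a|^2}.
\]
The discriminant is at least $1-|a|^2>0$, so $t$ is a smooth function of $u$ on all of $\mathbb{R}^{n+1}$ near $S^n$. Its gradient is bounded on the compact set $S^n$, say by a constant $C_a$, which depends only on $|a|$ and $1-|a|^2$. Also $|t|\leq 2$. Hence
\[
|Q(u)-Q(v)|\leq|t(u)-t(v)|+|t(v)||u-v|\leq (C'+2)|u-v|,
\]
where $C'$ is the Lipschitz constant of $t$ on $S^n$. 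Because $S^n$ is not convex, I will bound $C'$ by working on the convex ball $\overline{B(0,1)}\setminus\{\text{small}\}$, or more simply on the annulus $\tfrac12\leq|w|\leq2$ after extending $t$ homogeneously. Alternatively, the chordal and intrinsic metrics on $S^n$ are equivalent, so the mean value theorem along great-circle arcs gives $C'\leq\frac{\pi}{2}\sup|\nabla t|$.

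\textbf{Step 3: combine.} We get $\mathrm{Lip}(P)\leq (C'+2)\cdot\frac{2}{r}$. The only mildly delicate point is Step 2: we need the uniform positivity of the discriminant, which is exactly where the hypothesis $|a|<1$ enters, and we must pass from a gradient bound to a chordal Lipschitz bound on the nonconvex sphere, which the metric equivalence noted above handles. Everything else is routine.
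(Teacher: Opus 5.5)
Your proof is correct and follows essentially the paper's route: both write $x=a+\alpha u$, use the explicit root $t(u)=-\langle a,u\rangle+\sqrt{\langle a,u\rangle^2+1-|a|^2}$, and combine a Lipschitz bound for $t$ on $S^n$ with a bound $|u-v|\le C|x-y|/r$. The paper proves both estimates more sharply and without calculus, using the rationalization bound $|t(u)-t(v)|\le 2|a|\,|u-v|$ and the cosine-law identity $|x-y|^2=(\alpha-\beta)^2+\alpha\beta|u-v|^2\ge r^2|u-v|^2$ in place of your gradient/great-circle argument and triangle inequality. Your worry about the nonconvexity of $S^n$ is also unnecessary (and the ball/annulus workarounds you float are not convex either), since the formula for $t$ is smooth on all of $\mathbb{R}^{n+1}$ with $|\nabla t|\le 2|a|$, so the mean value theorem applies along straight chords.
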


\begin{proof}
Put $s=|a|<1$. For $x,y\notin B(a,r)$, write $x=a+\alpha u$ and $y=a+\beta v$, where $u,v\in S^n$ and $\alpha,\beta\geq r$. Write
\[
P(x)=a+\lambda(u)u,\qquad P(y)=a+\lambda(v)v.
\]
By the cosine law applied to the triangle with vertices $0,a,P(x)$,
\[
\lambda(u)^2+2\langle a,u\rangle_E\lambda(u)+s^2=1.
\]
Therefore
\[
\lambda(u)=-\langle a,u\rangle_E+\sqrt{\langle a,u\rangle_E^2+1-s^2},
\qquad 1-s\leq\lambda(u)\leq1+s.
\]
For real numbers $b,c$, rationalizing the difference gives
\[
\left|\sqrt{b^2+1-s^2}-\sqrt{c^2+1-s^2}\right|
=\frac{|b-c|\,|b+c|}{\sqrt{b^2+1-s^2}+\sqrt{c^2+1-s^2}}
\leq|b-c|.
\]
It follows that
\[
|\lambda(u)-\lambda(v)|\leq2|\langle a,u-v\rangle_E|\leq2s|u-v|.
\]
Applying the cosine law to the two triangles with common vertex $a$, we obtain
\begin{align*}
|P(x)-P(y)|^2
&=(\lambda(u)-\lambda(v))^2+\lambda(u)\lambda(v)|u-v|^2\\
&\leq\bigl(4s^2+(1+s)^2\bigr)|u-v|^2,\\
|x-y|^2
&=(\alpha-\beta)^2+\alpha\beta|u-v|^2
\geq r^2|u-v|^2.
\end{align*}
Consequently,
\[
|P(x)-P(y)|\leq\frac{\sqrt{4s^2+(1+s)^2}}{r}|x-y|,
\]
which proves the assertion.
\end{proof}

\begin{theorem}\label{ap derivative}
\textbf{\emph{\cite[3.1.4]{Federer}.}} If $f:R^m \to R^n$ is $\mathcal{L}^m$ measurable, then
$A_i=\mathrm{dmn\, ap}\, D_i f$ is an $\mathcal{L}^m$ measurable set,
$ap\, D_i f$ is an $\mathcal{L}^m \llcorner A_i$ measurable function.
\end{theorem}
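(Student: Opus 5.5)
This is Federer's theorem 3.1.4, which the paper quotes without proof. The plan below follows the standard route to it.

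The statement is coordinatewise, so I will fix $i$ and a component of $f$ and reduce to a real-valued measurable $g$. I will express approximate differentiability at $x$, in direction $e_i$, through countably many measurable conditions built from difference quotients. Concretely, I will use the characterization of the approximate limit through densities. For rational $q$, rational $\varepsilon>0$ and rational $r>0$, consider the set
\[
S(q,\varepsilon,r)=\{(x,y): |g(y)-g(x)-q\,\langle y-x,e_i\rangle_E|>\varepsilon|y-x|_E,\ |y-x|_E<r\}.
\]
It is $\mathcal L^{2m}$ measurable, since $(x,y)\mapsto g(y)$ and $(x,y)\mapsto g(x)$ are measurable on the product. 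Fubini then makes
\[
\phi_{q,\varepsilon,r}(x)=\frac{\mathcal L^m(S(q,\varepsilon,r)_x)}{\mathcal L^m(B(x,r))}
\]
an $\mathcal L^m$ measurable function of $x$.

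The first step is to write the domain $A_i$ as a Borel-type combination of sublevel sets of the functions $\phi_{q,\varepsilon,r}$. Here $A_i$ is the set where $g$ is approximately differentiable in the direction $e_i$ (with the appropriate one-variable or full notion used in 3.1.4). The idea is that $x\in A_i$ iff there is a number $c$ such that, for every $\varepsilon$, the density of the bad set tends to $0$ as $r\to0$. I will replace $c$ by rational approximants; the error $|q-c|\,|y-x|$ is absorbed into $\varepsilon|y-x|$. This gives
\[
A_i=\bigcap_{\varepsilon}\bigcup_{q}\bigcap_{k}\bigcup_{r_0}\bigcap_{r<r_0}\{\phi_{q,\varepsilon,r}<1/k\}.
\]
A Cauchy-type consistency condition between different $q$ will also be needed to force a single limit $c$. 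Since every operation is countable, $A_i$ is measurable.

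The second step handles the function. On $A_i$, the value $\mathrm{ap}\,D_ig(x)$ lies in $(a,\infty)$ iff some rational $q>a$ satisfies the density condition above with margin. So $\{x\in A_i:\mathrm{ap}\,D_ig(x)>a\}$ is again a countable combination of measurable sets, and $\mathrm{ap}\,D_ig$ is $\mathcal L^m\llcorner A_i$ measurable.

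The main obstacle is making the density characterization exact. The approximate limit must be encoded by countably many radii without losing measurability. Doing this requires that $r\mapsto\phi_{q,\varepsilon,r}(x)$ behaves monotonically enough, up to constants, to restrict $r$ to rationals. It also requires that approximate differentiability is stable under replacing the exact derivative $c$ by nearby rationals. I would handle this by slightly enlarging $\varepsilon$ and comparing balls of radii $r$ and $2r$. The alternative, if this gets messy, is to cite Federer's proof directly, since the paper treats the result as classical.
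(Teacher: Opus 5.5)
The step that fails is your characterization of $A_i$. The bad set $S(q,\varepsilon,r)$ is taken over the full $m$-dimensional ball, but it compares $g(y)-g(x)$ only with the rank-one form $q\langle y-x,e_i\rangle_E$. Suppose that for every $\varepsilon$ there is a rational $q$ for which $S(q,\varepsilon,r)_x$ has density zero at $x$. This says exactly that $g$ is approximately differentiable at $x$ with $\mathrm{ap}\,Dg(x)=c\,\langle\cdot,e_i\rangle_E$. In other words, every \emph{other} approximate partial derivative vanishes at $x$.

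So your displayed countable expression describes a different set from $A_i$, under either reading of $\mathrm{ap}\,D_i$ that you leave open. For a concrete failure, take $g(y)=\langle y,e_j\rangle_E$ with $j\neq i$. Then $D_ig\equiv 0$, and the true $A_i$ is all of $\mathbb{R}^m$. But for every $q$ and every $\varepsilon<1$, the slice $S(q,\varepsilon,r)_x$ is $B(x,r)$ intersected with $x$ plus the open cone $\{z:|z_j-qz_i|>\varepsilon|z|_E\}$. This cone contains $e_j$, so its density at $x$ is a positive constant independent of $r$, and your formula returns $\emptyset$.

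You have to commit to one notion and build $S$ for it.

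\emph{Full differential.} If $\mathrm{ap}\,D_if(x)$ means the $i$-th component of the full approximate differential, replace $q\langle y-x,e_i\rangle_E$ by $\langle Q,y-x\rangle_E$ with $Q\in\mathbb{Q}^m$. Your scheme then shows that $\mathrm{dmn}\,\mathrm{ap}\,Dg$ is measurable; note this set does not depend on $i$. It also shows that each component of $\mathrm{ap}\,Dg$ is measurable on it.

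\emph{One-variable derivative.} If $\mathrm{ap}\,D_if(x)$ means the approximate derivative along the line $x+\mathbb{R}e_i$, as the index on $A_i$ suggests, the bad set must be one-dimensional:
$\{(x,t)\in\mathbb{R}^m\times\mathbb{R}:0<|t|<r,\ |g(x+te_i)-g(x)-qt|>\varepsilon|t|\}$.
This set is $\mathcal{L}^{m+1}$ measurable. The reason is that $(x,t)\mapsto x+te_i$ is a linear surjection, so it pulls back $\mathcal{L}^m$-null sets to null sets. Fubini then makes $x\mapsto\mathcal{L}^1(\text{slice})/(2r)$ measurable. For an $\mathcal{L}^m$-null set of $x$ the slices may be non-measurable, so outer measure enters the approximate limit there; this is harmless by completeness.

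With either correction the rest of your plan goes through, and the two obstacles you flag are not real ones.
\begin{itemize}
\item No extra Cauchy condition is needed. If the bad sets for $(q,\varepsilon)$ and $(q',\varepsilon')$ both have density zero at $x$, their complements meet at points arbitrarily close to $x$. In the $m$-dimensional version they meet in every direction cone. This forces $|q-q'|\le\varepsilon+\varepsilon'$ (respectively $|Q-Q'|_E\le\varepsilon+\varepsilon'$).
\item Rational radii suffice. The numerator of $\phi_{q,\varepsilon,r}(x)$ is nondecreasing and continuous in $r$, because spheres are null. Hence $\phi_{q,\varepsilon,r}(x)$ is continuous in $r$.
\end{itemize}
The paper itself gives no argument for this statement; it only cites Federer.
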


\begin{theorem}\label{lip differentiable a.e.: rademacher's theorem}
\textbf{\emph{Rademacher's theorem (\cite[3.1.6]{Federer}.}} If $f: R^m\to R^n$ is Lipschitz, then $f$ is differentiable $\mathcal{L}^m$ almost everywhere.

\end{theorem}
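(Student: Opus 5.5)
Since $f=(f^1,\dots,f^n)$ is differentiable at a point exactly when every component is, I would reduce to $n=1$ and prove that a Lipschitz $f:\mathbb{R}^m\to\mathbb{R}$ with constant $C=\mathrm{Lip}(f)$ is differentiable $\mathcal{L}^m$-a.e. The plan is the classical one. First get directional derivatives almost everywhere. Then show they are linear in the direction. Finally upgrade to full differentiability using a countable dense set of directions and the uniform Lipschitz bound.

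\emph{Step 1 (directional derivatives).} Fix a unit vector $v$. For each $x$, the function $t\mapsto f(x+tv)$ is Lipschitz on $\mathbb{R}$, hence absolutely continuous. By Lebesgue's one-dimensional differentiation theorem it is differentiable for $\mathcal{L}^1$-a.e. $t$. The set $N_v$ where
\[
D_vf(x)=\lim_{t\to0}\frac{f(x+tv)-f(x)}{t}
\]
fails to exist is Borel, being defined through $\limsup$ and $\liminf$ of continuous difference quotients over rational $t$. Each line parallel to $v$ meets $N_v$ in a null set, so Fubini gives $\mathcal{L}^m(N_v)=0$. Applying this to $v=e_i$ shows that $\nabla f$ exists a.e., and that $|D_vf|\le C$ wherever $D_vf$ exists.

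\emph{Step 2 (linearity in $v$).} I will show $D_vf=\langle\nabla f,v\rangle_E$ a.e. Take $\zeta\in C_c^\infty(\mathbb{R}^m)$. The difference quotients of $f$ are bounded by $C$, so dominated convergence applies. Combined with a change of variables $x\mapsto x-tv$, this gives
\[
\int D_vf\,\zeta\,dx=-\int f\,D_v\zeta\,dx=-\sum_i v_i\int f\,\partial_i\zeta\,dx=\sum_i v_i\int \partial_if\,\zeta\,dx.
\]
The last equality is the same computation with $v=e_i$. Since $\zeta$ is arbitrary, $D_vf=\sum_iv_i\partial_if$ a.e.

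\emph{Step 3 (full differentiability).} This is the step I expect to be the main obstacle. Choose a countable dense set $\{v_k\}\subset S^{m-1}$. Let $A$ be the full-measure set of points where $\nabla f$ and every $D_{v_k}f$ exist and $D_{v_k}f=\langle\nabla f,v_k\rangle_E$. Fix $x\in A$ and $\varepsilon>0$, and pick finitely many $v_k$ forming an $\varepsilon/(2C)$-net of the sphere.

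For $|h|$ small, write $h=|h|w$ with $w$ a unit vector and choose $v_k$ within $\varepsilon/(2C)$ of $w$. The Lipschitz bound gives $|f(x+|h|w)-f(x+|h|v_k)|\le C|h|\,|w-v_k|$, and $|\nabla f(x)|\le\sqrt m\,C$. Hence
\[
\bigl|f(x+h)-f(x)-\langle\nabla f(x),h\rangle_E\bigr|\le |h|\Bigl(\Bigl|\tfrac{f(x+|h|v_k)-f(x)}{|h|}-D_{v_k}f(x)\Bigr|+\varepsilon+\sqrt m\,C\cdot\tfrac{\varepsilon}{2C}\Bigr).
\]
The net is finite, so the first term is less than $\varepsilon$ uniformly in $k$ once $|h|$ is small. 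Therefore $f$ is differentiable at $x$ with derivative $\nabla f(x)$.

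The delicate points are this uniformity, which needs both the finite net and the global Lipschitz constant, and the measurability of $N_v$ in Step 1.
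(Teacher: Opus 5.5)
Your proof is correct and complete: the one-dimensional Lebesgue theorem plus Fubini gives the a.e.\ directional derivatives, integration by parts gives $D_vf=\langle\nabla f,v\rangle_E$ a.e., and the finite-net argument with the global Lipschitz bound upgrades this to full differentiability. This is the standard self-contained proof, as in Evans--Gariepy.

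The paper does not prove this statement. It quotes it from Federer \cite[3.1.6]{Federer} as a black-box tool in the appendix, used only to get a.e.\ differentiability of Lipschitz maps on spheres, so that the Lorentzian Jacobians in the volume definition make sense. There is therefore no competing argument in the paper to compare with. Your route gives a proof from elementary ingredients, and it handles the measurability of $N_v$ and of $D_vf$, which is often glossed over. The paper's citation gives brevity and places the result within Federer's general framework of (approximate) differentiability.

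The only cosmetic point is the degenerate case $\mathrm{Lip}(f)=0$. There your net radius $\varepsilon/(2C)$ is undefined, but $f$ is constant; replacing $C$ by $C+1$ throughout removes the issue.
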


\begin{corollary}\label{lip differentiable a.e. on sphere}
Let $S^m$ be the unit sphere in $\mathbb{R}^{m+1}$, with distance function $d(x,y) = |x-y|_{R^{m+1}}$. If $f: S^m\to R^n$ is Lipschitz, then $f$ is differentiable almost everywhere.
\end{corollary}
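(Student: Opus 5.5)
We reduce Corollary~\ref{lip differentiable a.e. on sphere} to Rademacher's theorem (Theorem~\ref{lip differentiable a.e.: rademacher's theorem}) by working in local bi-Lipschitz charts. The first step is to cover $S^m$ by finitely many open sets, say the $2(m+1)$ open hemispheres $U_i^\pm=\{x\in S^m\mid \pm x_i>1/2\}$, suitably shrunk. On each such set use the graph chart $\psi_i^\pm:U_i^\pm\to \mathbb{R}^m$, which forgets the $i$-th coordinate. Its inverse is $y\mapsto (y,\pm\sqrt{1-|y|^2})$, with the last entry inserted in slot $i$, and it is defined on the ball $|y|^2<3/4$. Both $\psi_i^\pm$ and its inverse are smooth with bounded derivatives on the closure of this domain. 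Hence each chart is bi-Lipschitz with respect to the chordal metric $d(x,y)=|x-y|_{\mathbb{R}^{m+1}}$ and the Euclidean metric on the image.

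Next we transport the problem to $\mathbb{R}^m$. For each chart the composition $g=f\circ(\psi_i^\pm)^{-1}$ is Lipschitz on the open set $V=\psi_i^\pm(U_i^\pm)\subset\mathbb{R}^m$. To apply Rademacher's theorem, which is stated for maps defined on all of $\mathbb{R}^m$, we extend $g$ to a Lipschitz map $\tilde g:\mathbb{R}^m\to\mathbb{R}^n$. One way is the componentwise McShane extension
\[
\tilde g_k(y)=\inf_{z\in V}\bigl(g_k(z)+\mathrm{Lip}(g_k)|y-z|\bigr),
\]
which is Lipschitz with the same constant in each component. Then $\tilde g$ is differentiable $\mathcal{L}^m$-a.e. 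Since $V$ is open, $\tilde g$ agrees with $g$ on a neighborhood of every point of $V$, so $g$ itself is differentiable at a.e. point of $V$.

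Finally we pull the conclusion back to the sphere. At a point $x=(\psi_i^\pm)^{-1}(y)$ where $g$ is differentiable, the map $f=g\circ\psi_i^\pm$ is differentiable at $x$ along $S^m$, by the chain rule with the smooth chart. Its differential $Df_x=Dg_y\circ D\psi_i^\pm(x)$ is defined on $T_xS^m$. The set of bad points in $U_i^\pm$ is the image under the Lipschitz map $(\psi_i^\pm)^{-1}$ of an $\mathcal{L}^m$-null set, so it has $\mathcal{H}^m$-measure zero, because Lipschitz maps do not increase $\mathcal{H}^m$ by more than the factor $\mathrm{Lip}^m$ and $\mathcal{H}^m=\mathcal{L}^m$ on $\mathbb{R}^m$. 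A finite union of null sets is null, which proves the corollary with respect to $\mathcal{H}^{m}$ on $S^m$.

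The only point needing care is the meaning of "differentiable" on $S^m$. We interpret it as differentiability of the map restricted to the manifold, equivalently of its chart representatives; this is independent of the chart because transition maps are smooth. The McShane extension step is the one technical ingredient beyond Theorem~\ref{lip differentiable a.e.: rademacher's theorem}. Alternatively, one can avoid it by extending $f$ radially to a neighborhood of $S^m$ in $\mathbb{R}^{m+1}$.
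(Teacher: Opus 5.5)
Your proposal is correct and is the standard reduction to Rademacher's theorem through bi-Lipschitz graph charts. The paper intends exactly this: it states the corollary without proof as an immediate consequence of Rademacher's theorem. Your extension, chain-rule and null-set steps are all sound.

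One concrete slip needs fixing. The caps $\{\pm x_i>1/2\}$ do not cover $S^m$ once $m\ge 3$. For instance, $(\tfrac12,\tfrac12,\tfrac12,\tfrac12)\in S^3$ lies in none of them, since in general one only has $\max_i|x_i|\ge 1/\sqrt{m+1}$. Take the threshold $c<1/\sqrt{m+1}$ instead, with inverse charts on $|y|^2<1-c^2$. There $|\nabla\sqrt{1-|y|^2}|\le\sqrt{1-c^2}/c$, so the charts stay bi-Lipschitz. Also drop ``suitably shrunk'', since shrinking the sets only makes the coverage worse.
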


\begin{theorem}\label{lip extension on Euclidean spaces}
\textbf{\emph{Kirszbraun's theorem (\cite[2.10.43]{Federer}.}} If $S\subset R^m$ and $f: S \to R^n$ is Lipschitz, then $f$ has a Lipschitz extension $g: R^m \to R^n$ with $\mathrm{Lip} (g) = \mathrm{Lip} (f)$.
\end{theorem}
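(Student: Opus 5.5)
The statement to prove is Kirszbraun's theorem. Assuming Kirszbraun's theorem itself (it is a classical cited result, \cite[2.10.43]{Federer}), the natural plan is to recall the standard proof structure. I would first reduce to a finite statement and then pass to the limit by compactness.

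\emph{Finite intersection property.} Let $C=\mathrm{Lip}(f)$ and normalize $C=1$. The key lemma I would prove first is a one-point extension. For finitely many points $x_i\in S$, radii $r_i>0$, and any $x\in\mathbb{R}^m$, suppose the balls $\overline B(x_i,r_i)$ in $\mathbb{R}^m$ all contain $x$. Then the balls $\overline B(f(x_i),r_i)$ in $\mathbb{R}^n$ have a common point. Taking $r_i=|x-x_i|_E$, this gives a value $y$ with $|y-f(x_i)|_E\leq|x-x_i|_E$ for all $i$.

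For infinitely many $x_i\in S$, the closed balls $\overline B(f(x_i),|x-x_i|_E)$ are compact. By the finite intersection property they then have a common point. This extends $f$ to $S\cup\{x\}$ without increasing the Lipschitz constant.

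\emph{Globalizing.} Zorn's lemma applied to 1-Lipschitz extensions ordered by inclusion gives a maximal extension. Alternatively, one can extend to a countable dense set and then use uniform continuity to reach the closure and all of $\mathbb{R}^m$. The one-point step shows that a maximal extension must have domain $\mathbb{R}^m$.

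\emph{Main obstacle: the finite lemma.} This is where the Euclidean (Hilbert) structure is essential. I would argue by minimizing $\max_i |y-f(x_i)|_E/r_i$ over $y$. Suppose the minimum value $\lambda$ exceeds $1$, and let $y_0$ be a minimizer. Then $y_0$ lies in the convex hull of the active points $f(x_i)$, so $y_0=\sum\mu_i f(x_i)$ with $\mu_i\geq0$ and $\sum\mu_i=1$.

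Write $u_i=f(x_i)-y_0$, with $|u_i|_E=\lambda r_i$, and $v_i=x_i-x$, with $|v_i|_E\leq r_i$. Use the identity
\[
0=\Bigl|\sum\mu_iu_i\Bigr|^2=\tfrac12\sum_{i,j}\mu_i\mu_j\bigl(|u_i|^2+|u_j|^2-|u_i-u_j|^2\bigr).
\]
Combine it with $|u_i-u_j|\leq|v_i-v_j|$, $|u_i|^2=\lambda^2r_i^2>\lambda^2|v_i|^2$, and the analogous nonnegative expression for $|\sum\mu_iv_i|^2$. Together these force a contradiction when $\lambda>1$. Hence $\lambda\leq1$ and the lemma follows.
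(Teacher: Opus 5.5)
Your outline is correct. The paper gives no proof of its own and only quotes \cite[2.10.43]{Federer}; your argument is essentially the standard one found there: a finite ball-intersection lemma proved by minimizing $\max_i|y-f(x_i)|_E/r_i$ and placing the minimizer in the convex hull of the active values, followed by compactness of closed balls and a maximality argument.

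One slip needs repair. The strict inequality $\lambda^2r_i^2>\lambda^2|v_i|_E^2$ fails in exactly the case $r_i=|x-x_i|_E$ that you use for the one-point extension. What the contradiction actually needs is $|u_i|_E^2=\lambda^2r_i^2>r_i^2\geq|v_i|_E^2$, which holds because $\lambda>1$ and $r_i>0$.
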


\begin{corollary}\label{lip extension on sphere}
Let $m\leq n$. If $T\subset S^m$ and $f: T \to S^n$ is Lipschitz, then $f$ has a Lipschitz extension $F: S^m \to S^n$.
\end{corollary}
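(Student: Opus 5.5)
The plan is to reduce Corollary~\ref{lip extension on sphere} to Kirszbraun's theorem (Theorem~\ref{lip extension on Euclidean spaces}) together with a retraction onto the sphere. I would regard $T\subset S^m\subset\mathbb{R}^{m+1}$ and $f:T\to S^n\subset\mathbb{R}^{n+1}$ as a Lipschitz map between subsets of Euclidean spaces; this is legitimate because the metrics on $S^m$ and $S^n$ are, by convention, the restrictions of the Euclidean distances. Kirszbraun's theorem then gives a Lipschitz $g:\mathbb{R}^{m+1}\to\mathbb{R}^{n+1}$ with $g|_T=f$ and $\mathrm{Lip}(g)=\mathrm{Lip}(f)$. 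Restricting to $S^m$ yields a Lipschitz map $g|_{S^m}:S^m\to\mathbb{R}^{n+1}$, which agrees with $f$ on $T$ but may leave the sphere.

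The main obstacle is to push $g|_{S^m}$ back into $S^n$ without losing the Lipschitz property. The natural radial retraction $y\mapsto y/|y|$ is Lipschitz only away from the origin, and $g$ may well hit $0$. Here the hypothesis $m\le n$ is used. If $m<n$, the image $g(S^m)$ is a Lipschitz image of an $m$-dimensional set, so $\mathcal{H}^{n+1}(g(S^m))=0$, and indeed it has Hausdorff dimension at most $m<n+1$. If $m=n$, the same dimension count still gives $\mathcal{H}^{n+1}(g(S^m))=0$, since $g(S^m)$ has dimension at most $n<n+1$. In either case $g(S^m)$ is a compact set whose complement is nonempty, so it misses some point $c$ with $|c|<1/2$, and by compactness it misses a ball $B(c,r)$ with $r>0$.

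Having found such a point, I would compose with the central projection $P$ from $c$ onto $S^n$. Lemma~\ref{central projection lip} shows that $P$ is Lipschitz on $\mathbb{R}^{n+1}\setminus B(c,r)$, and $P$ fixes $S^n$ pointwise, since any point of $S^n$ lies on its own ray from $c$. Setting $F=P\circ g|_{S^m}$, the map $F$ sends $S^m$ into $S^n$, is Lipschitz as a composition of Lipschitz maps, and satisfies $F|_T=P\circ f=f$ because $f(T)\subset S^n$. This gives the required extension. The only delicate point is the avoidance step; I expect the dimension argument there to be routine via the standard bound $\mathcal{H}^{k}(g(A))\le \mathrm{Lip}(g)^k\mathcal{H}^k(A)$.
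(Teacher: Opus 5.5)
Your proposal is correct and matches the paper's proof step for step: a Kirszbraun extension restricted to $S^m$, the observation that $g(S^m)$ is a compact $\mathcal{H}^{n+1}$-null set and so misses some ball $B(c,r)$ with $c$ inside the unit ball, and composition with the central projection from $c$ via Lemma~\ref{central projection lip}. The only addition in the paper is the trivial case $T=\emptyset$, which it handles with a constant map.
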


\begin{proof}
If $T=\emptyset$, a constant map gives the required extension. Otherwise, regard $f$ as a Lipschitz map from $T\subset \mathbb{R}^{m+1}$ to $\mathbb{R}^{n+1}$. By Theorem~\ref{lip extension on Euclidean spaces}, it extends to a Lipschitz map on $\mathbb{R}^{m+1}$. Restricting this extension to $S^m$ gives a Lipschitz map $g:S^m\to \mathbb{R}^{n+1}$ extending $f$, with $\mathcal{H}^{n+1}(g(S^m)) = 0$ and $g(S^m)$ compact. So there exists $p\in D^{n+1}$ and $0<r$ such that $B(p,r)\cap g(S^m) = \emptyset$.

Let
\[
F(x) = \{p+t(g(x)-p)|t>0\}\cap S^n
\]
be the central projection from $p$. Then by Lemma~\ref{central projection lip}, $F$ is Lipschitz, and $F|_T=f$ because $f(T)\subset S^n$.
\end{proof}

\begin{theorem}\label{lip area formula}
\textbf{\emph{(\cite[3.2.3(2)]{Federer}.}} Suppose $f: R^m \to R^n$ is Lipschitz with $m\leq n$. If $u$ is a $\mathcal{L}^m$ integrable function, then
\[
\int_{R^m} u(x) J_mf(x) d\mathcal{L}^m x = \int _{R^n}\sum_{x\in f^{-1}(y)} u(x) d\mathcal{H}^my.
\]
\end{theorem}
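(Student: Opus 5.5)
The plan is to follow the classical route through Federer's linearization lemma. I first prove the formula for $u=\mathbf 1_A$ with $A$ a Borel subset of $\mathbb{R}^m$, where it reads
\[
\int_A J_mf\,d\mathcal L^m=\int_{\mathbb{R}^n}\#\bigl(A\cap f^{-1}(y)\bigr)\,d\mathcal H^m(y).
\]
I then pass to general $u$ by the usual measure-theoretic extension.

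\textbf{Differentiability.} By Theorem~\ref{lip differentiable a.e.: rademacher's theorem}, $f$ is differentiable outside an $\mathcal L^m$-null set $N$. Its image satisfies $\mathcal H^m(f(N))\le \mathrm{Lip}(f)^m\,\mathcal H^m(N)=0$, so $N$ contributes nothing to either side. By Theorem~\ref{ap derivative}, $Df$ and hence $J_mf$ are measurable.

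\textbf{Splitting by the Jacobian.} I split the set of differentiability points into two parts:
\[
Z=\{J_mf=0\},\qquad G=\{J_mf>0\}.
\]

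\textbf{The set $Z$.} I show $\mathcal H^m(f(Z))=0$ by a perturbation trick. For $\varepsilon>0$, consider
\[
g_\varepsilon(x)=\bigl(f(x),\varepsilon x\bigr)\colon \mathbb{R}^m\to\mathbb{R}^{n+m}.
\]
At points of $Z$ its Jacobian is $O(\varepsilon)$. A covering argument, using small balls on which $g_\varepsilon$ is close to its differential, gives
\[
\mathcal H^m\bigl(g_\varepsilon(Z\cap B)\bigr)\le C\varepsilon\,\mathcal L^m(B).
\]
Projecting onto the first factor does not increase $\mathcal H^m$, so $\mathcal H^m(f(Z\cap B))=0$. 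This handles both sides on $Z$.

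\textbf{The set $G$.} Fix $t>1$. I decompose $G$ into countably many disjoint Borel pieces $E_k$. On each piece there is an injective linear map $T_k\colon\mathbb{R}^m\to\mathbb{R}^n$ with
\[
t^{-1}|T_kv|\le |Df_x v|\le t|T_kv| \quad\text{for } x\in E_k,
\]
and
\[
t^{-1}|T_k(x-y)|\le |f(x)-f(y)|\le t|T_k(x-y)| \quad\text{for } x,y\in E_k.
\]
This is Federer's Lemma~3.2.2, and I would prove it as he does. Choose a countable dense family of injective linear maps. Using differentiability, for each $x$ pick a radius below which
\[
|f(y)-f(x)-Df_x(y-x)|\le\delta|y-x|.
\]
Then sort points by the chosen linear map and a rational radius, and intersect with small-diameter cells.

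On each $E_k$, the map $f\circ T_k^{-1}$ is $t$-bi-Lipschitz on $T_k(E_k)$. For the linear map itself,
\[
\mathcal H^m(T_kA)=J(T_k)\,\mathcal L^m(A),
\]
by the fact that $\mathcal H^m$ agrees with Lebesgue measure on $m$-planes. Together these give
\[
t^{-2m}\int_{E_k\cap A}J_mf\ \le\ \mathcal H^m\bigl(f(E_k\cap A)\bigr)\ \le\ t^{2m}\int_{E_k\cap A}J_mf.
\]
Since $f$ is injective on $E_k$, summing over $k$ gives
\[
\sum_k\mathcal H^m\bigl(f(E_k\cap A)\bigr)=\int_{\mathbb{R}^n}\#\bigl(A\cap G\cap f^{-1}(y)\bigr)\,d\mathcal H^m(y),
\]
by monotone convergence applied to the counting functions. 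Letting $t\downarrow1$ yields the identity for $\mathbf 1_A$.

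\textbf{Extension to general $u$.} Linearity extends the identity to simple nonnegative Borel functions. Monotone convergence extends it to nonnegative measurable $u$, after modifying $u$ on an $\mathcal L^m$-null set. That set does not affect the right side, since its image is $\mathcal H^m$-null. Finally, integrable $u$ is handled by splitting $u=u^+-u^-$. Finiteness of the left side then shows the right-hand sum converges absolutely for $\mathcal H^m$-a.e.\ $y$.

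I expect the main obstacle to be the decomposition lemma on $G$: obtaining uniform two-sided comparison with a single linear map on each Borel piece, together with the measurability bookkeeping. The Sard-type estimate for $Z$ is the second delicate point.
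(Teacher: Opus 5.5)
Your proposal is correct and is essentially Federer's own argument (Lemma~3.2.2 followed by Theorem~3.2.3); the paper gives no proof of this statement and simply cites that source. The one step where Federer is simpler is $Z$: he treats $G$ first, and since $g_\varepsilon$ is injective with $J_mg_\varepsilon\ge\varepsilon^m>0$ wherever it is differentiable, the $G$-case applied to $g_\varepsilon$ gives $\mathcal H^m(f(Z\cap B))\le\mathcal H^m(g_\varepsilon(Z\cap B))=\int_{Z\cap B}J_mg_\varepsilon\,d\mathcal L^m\le\varepsilon(\mathrm{Lip}(f)+\varepsilon)^{m-1}\mathcal L^m(B)$, so no separate covering argument is needed (and a direct covering argument would have to bound $\mathcal H^m_\eta$ at scales comparable to the ball radii, in which case the perturbation becomes superfluous).
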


\begin{corollary}\label{lip area formula sphere}
Suppose $f: S^n \to S^n$ is Lipschitz. If $u$ is a $\mathcal{H}^n$ integrable function, then
\[
\int_{S^n} u(x) J_nf(x) d\mathcal{H}^n x = \int _{S^n}\sum_{x\in f^{-1}(y)} u(x) d\mathcal{H}^n y.
\]
\end{corollary}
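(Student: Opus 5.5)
The plan is to transport the Euclidean area formula (Theorem~\ref{lip area formula}) to the sphere by covering $S^n$ with finitely many bi-Lipschitz charts and gluing with a partition into measurable pieces.

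First, cover $S^n$ by the $2(n+1)$ open hemispheres $U_j^\pm=\{x\in S^n: \pm x_j>0\}$. Each is mapped bi-Lipschitzly onto the open unit ball of $\mathbb{R}^n$ by the coordinate projection $\psi_j^\pm$, restricted to a slightly smaller closed cap, say $\{\pm x_j\ge 1/(2\sqrt{n+1})\}$. These smaller caps $K_j^\pm$ still cover $S^n$, since some coordinate of a unit vector has modulus at least $1/\sqrt{n+1}$. Then form the disjoint Borel partition $A_1,\dots,A_N$ of $S^n$ by ordering the caps and setting $A_k=K_k\setminus\bigcup_{l<k}K_l$. By linearity in $u$ (which we may take Borel, and nonnegative after splitting $u=u^+-u^-$), it suffices to prove the identity with $u$ replaced by $u\mathbf 1_{A_k}$ for each $k$.

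Fix $k$, with chart $\psi=\psi_k:K_k\to\psi(K_k)\subset\mathbb{R}^n$ bi-Lipschitz. Extend $f\circ\psi^{-1}:\psi(K_k)\to S^n\subset\mathbb{R}^{n+1}$ by Kirszbraun (Theorem~\ref{lip extension on Euclidean spaces}) to a Lipschitz $g:\mathbb{R}^n\to\mathbb{R}^{n+1}$, and apply Theorem~\ref{lip area formula} to $g$ with weight $\tilde u=(u\mathbf 1_{A_k})\circ\psi^{-1}\cdot J\psi^{-1}$, extended by zero. On the right-hand side, the multiplicity sum over $g^{-1}(y)$ only sees points of $\psi(A_k)$, where $g=f\circ\psi^{-1}$; hence it equals $\sum_{x\in f^{-1}(y)}u(x)\mathbf 1_{A_k}(x)$, integrated against $\mathcal H^n$ on $\mathbb{R}^{n+1}$, which is supported on $S^n$. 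On the left-hand side, at a.e.\ density point of $\psi(A_k)$ where $g$ and $\psi^{-1}$ are differentiable, the chain rule gives $J_ng=J_nf(\psi^{-1}(\cdot))\,J\psi^{-1}$. The differential of $g$ at such points is determined by its values on $\psi(A_k)$, so it agrees with the differential of $f\circ\psi^{-1}$. Combining this with the Euclidean area formula for the smooth chart $\psi^{-1}$ (change of variables) converts the left-hand side into $\int_{A_k}u\,J_nf\,d\mathcal H^n$. Summing over $k$ gives the corollary.

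The main obstacle is the Jacobian bookkeeping: making sure $J_nf$ is well defined $\mathcal H^n$-a.e.\ on $S^n$ (via Corollary~\ref{lip differentiable a.e. on sphere}) and that it coincides with the Jacobian of the extension $g$ at a.e.\ point of $\psi(A_k)$. I would handle this with the density-point argument already used in the proof of Theorem~\ref{(half) volume comparison}: at a density-one point, two Lipschitz maps that agree on the set and are both differentiable there have the same differential. Integrability is not an issue, since $J_nf$ is bounded by $\mathrm{Lip}(f)^n$.
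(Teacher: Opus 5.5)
Your strategy (bi-Lipschitz hemisphere charts, a Borel partition $A_k$, Kirszbraun, and Theorem~\ref{lip area formula} on each piece) is the right one. It is also exactly what the paper leaves implicit: the paper gives no proof of this corollary and treats it as an immediate transfer of Federer's area formula to the sphere.

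However, the Jacobian bookkeeping, which you flagged as the main obstacle, goes wrong in your choice of weight. With $\tilde u=(u\mathbf 1_{A_k})\circ\psi^{-1}\cdot J\psi^{-1}$, the factor $J\psi^{-1}$ is carried into the multiplicity sum. The right-hand side is therefore
\[
\int\sum_{x\in f^{-1}(y)\cap A_k}u(x)\,J\psi^{-1}(\psi(x))\,d\mathcal H^n(y),
\]
not $\int\sum_{x\in f^{-1}(y)}u(x)\mathbf 1_{A_k}(x)\,d\mathcal H^n(y)$. On the left, the chain rule $J_ng=(J_nf\circ\psi^{-1})\,J\psi^{-1}$ contributes a second copy of $J\psi^{-1}$. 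So the change of variables yields $\int_{A_k}u\,(J\psi^{-1}\circ\psi)\,J_nf\,d\mathcal H^n$, not $\int_{A_k}u\,J_nf\,d\mathcal H^n$. Both of your claimed identifications are off by the same factor, and that factor is not $1$: on the cap $\{\pm x_j\ge 1/(2\sqrt{n+1})\}$ it equals $1/|x_j|$.

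The fix is to drop the Jacobian from the weight. Take $\tilde u=(u\mathbf 1_{A_k})\circ\psi^{-1}$, extended by zero; it is still $\mathcal L^n$-integrable, since $J\psi^{-1}\ge 1$. Then the right-hand side is exactly $\int\sum_{x\in f^{-1}(y)\cap A_k}u(x)\,d\mathcal H^n(y)$. On the left, the single factor $J\psi^{-1}$ produced by the chain rule is precisely what the change of variables $\int_{\psi(A_k)}(w\circ\psi^{-1})\,J\psi^{-1}\,d\mathcal L^n=\int_{A_k}w\,d\mathcal H^n$ requires, with $w=u\,J_nf$.

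Equivalently, your computation as written proves the identity for the weight $u\cdot(J\psi^{-1}\circ\psi)$. Since this factor lies in $[1,2\sqrt{n+1}]$ on $K_k$, substituting $u/(J\psi^{-1}\circ\psi)$ recovers the statement.

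Incidentally, the density-point argument is more than you need. Since $g=f\circ\psi^{-1}$ on the open ball $\operatorname{int}\psi(K_k)$, the two differentials coincide there. The remainder $\psi(A_k)\setminus\operatorname{int}\psi(K_k)$ lies in a sphere, which is $\mathcal L^n$-null.
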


\Needspace{4\baselineskip}
\bibliographystyle{alpha}
\bibliography{references}

@article{Ye1,
  author   = {Ye, N. and Ma, X. and Wang, D.},
  title    = {The {Fenchel}-type inequality in the 3-dimensional {Lorentz} space and a {Crofton} formula},
  journal  = {Ann. Global Anal. Geom.},
  volume   = {50},
  number   = {3},
  year     = {2016},
  pages    = {249--259},
  mrnumber = {3554374}
}

@article{Ye2,
  author   = {Ye, N. and Ma, X.},
  title    = {Closed strong spacelike curves, {Fenchel} theorem and plateau problem in the 3-dimensional {Minkowski} space},
  journal  = {Chinese Ann. Math. Ser. B},
  volume   = {40},
  number   = {2},
  year     = {2019},
  pages    = {217--226},
  mrnumber = {3902060}
}

@article{Fukui,
  author   = {Fukui, T. and Honda, A. and Umehara, M.},
  title    = {Convexity of space-like projections of submanifolds with co-dimension 2 in {Lorentz-Minkowski} space},
  journal  = {C. R. Math. Acad. Sci. Paris},
  volume   = {363},
  year     = {2025},
  pages    = {109--113},
  mrnumber = {4876249}
}

@book{ONeill,
  author    = {O'Neill, B.},
  title     = {{Semi-Riemannian} geometry},
  series    = {Pure and Applied Mathematics},
  volume    = {103},
  publisher = {Academic Press},
  address   = {New York},
  year      = {1983},
  mrnumber  = {0719023}
}

@book{Schneider,
  author    = {Schneider, R.},
  title     = {Convex bodies: the {Brunn-Minkowski} theory},
  edition   = {Second expanded},
  series    = {Encyclopedia of Mathematics and its Applications},
  volume    = {151},
  publisher = {Cambridge University Press},
  address   = {Cambridge},
  year      = {2014},
  mrnumber  = {3155183}
}

@book{Federer,
  author    = {Federer, H.},
  title     = {Geometric measure theory},
  series    = {Die Grundlehren der mathematischen Wissenschaften},
  volume    = {153},
  publisher = {Springer-Verlag New York, Inc.},
  address   = {New York},
  year      = {1969},
  mrnumber  = {0257325}
}

@misc{AZ26,
      title={High codimension mean curvature flow of spacelike-convex submanifolds with one spacelike codimension}, 
      author={Ben Andrews and Qiyu Zhou},
      year={2026},
      eprint={2602.19537},
      archivePrefix={arXiv},
      primaryClass={math.DG},
      url={https://arxiv.org/abs/2602.19537}, 
      note=
      {arXiv:2602.19537}
}

@article{Van52,
  author  = {van Heijenoort, J.},
  title   = {On locally convex manifolds},
  journal = {Communications on Pure and Applied Mathematics},
  volume  = {5},
  number  = {3},
  pages   = {223--242},
  year    = {1952},
  doi     = {10.1002/cpa.3160050302}
}

@article{BW14,
  author  = {Brendle, Simon and Wang, Mu-Tao},
  title   = {A {Gibbons--Penrose} inequality for surfaces in
             {Schwarzschild} spacetime},
  journal = {Communications in Mathematical Physics},
  volume  = {330},
  number  = {1},
  pages   = {33--43},
  year    = {2014},
  doi     = {10.1007/s00220-014-1972-6}
}

@article{Li22,
  author  = {Li, Yang},
  title   = {{Dirichlet} problem for maximal graphs of higher codimension},
  journal = {International Mathematics Research Notices},
  volume  = {2022},
  number  = {3},
  pages   = {2159--2179},
  year    = {2022},
  doi     = {10.1093/imrn/rnaa099}
}

@misc{HM26,
  author       = {Hu, Baichuan and Ma, Xiang},
  title        = {Weighted normalized curve shortening flow
                  with applications in pseudo-{Euclidean} spaces},
  year         = {2026},
  howpublished = {arXiv:2609.24027 [math.DG]},
  url          = {https://arxiv.org/abs/2609.24027}
}

\end{document}